\documentclass[12pt]{amsart}

\usepackage{amscd}
\usepackage{amsthm}
\usepackage{amsmath}
\usepackage{amsfonts}
\usepackage{amssymb}
\usepackage[all]{xy}
\usepackage{graphicx}
\usepackage{verbatim}
\usepackage{color}

\usepackage{comment}

\usepackage{MnSymbol}

\newtheorem{theorem}{Theorem}[section]
\newtheorem{lemma}[theorem]{Lemma}
\newtheorem{proposition}[theorem]{Proposition}
\newtheorem{corollary}[theorem]{Corollary}

\theoremstyle{definition}
\newtheorem{definition}[theorem]{Definition}

\newtheorem{remark}[theorem]{Remark}
\newtheorem{notation}[theorem]{Notation}
\newtheorem{claim}[theorem]{Claim}

\setlength{\topmargin}{-0.5in}
\setlength{\textheight}{9in}
\setlength{\oddsidemargin}{0in}
\setlength{\evensidemargin}{0in}
\setlength{\textwidth}{6.5in}

\newcommand{\func}[1]{\operatorname{#1}}

\def\Id{{\rm Id}}

\def\int{{\sf Int}}
\def\cl{{\sf Cl}}

\newcounter{num}\setcounter{num}{1}

\newcommand{\thd}{{\twoheaddownarrow}}
\newcommand{\thu}{{\twoheaduparrow}}
\newcommand{\down}{{\downarrow}}
\newcommand{\up}{{\uparrow}}

\begin{document}

\title{De Vries powers: A generalization of Boolean powers for compact Hausdorff spaces}
\author{G.~Bezhanishvili, V.~Marra, P.~J.~Morandi, B.~Olberding}

\date{}

\begin{abstract}
We generalize the Boolean power construction to the setting of compact Hausdorff spaces. This is done by replacing Boolean algebras with de Vries algebras (complete Boolean algebras enriched with proximity) and Stone duality with de Vries duality. For a compact Hausdorff space $X$ and a totally ordered algebra $A$, we introduce the concept of a finitely valued normal function $f:X\to A$. We show that the operations of $A$ lift to the set $FN(X,A)$ of all finitely valued normal functions, and that there is a canonical proximity relation $\prec$ on $FN(X,A)$. This gives rise to the de Vries power construction, which when restricted to Stone spaces, yields the Boolean power construction.

We prove that de Vries powers of a totally ordered integral domain $A$ are axiomatized as proximity Baer Specker $A$-algebras, those pairs $(S,\prec)$, where $S$ is a torsion-free $A$-algebra generated by its idempotents that is a Baer ring, and $\prec$ is a proximity relation on $S$. We introduce the category of proximity Baer Specker $A$-algebras and proximity morphisms between them, and prove that this category is dually equivalent to the category of compact Hausdorff spaces and continuous maps. This provides an analogue of de Vries duality for proximity Baer Specker $A$-algebras.
\end{abstract}

\subjclass[2000]{06F25; 54H10; 54E05; 06E15}
\keywords{Specker algebra, $f$-ring, Baer ring, Boolean algebra, Boolean power, proximity, de Vries algebra, Stone space, compact Hausdorff space}

\maketitle

\pagestyle{myheadings}
\markboth{G.~Bezhanishvili, V.~Marra, P.~J.~Morandi, B.~Olberding}{De Vries powers: A generalization of Boolean powers for compact Hausdorff spaces}

\section{Introduction}

For an algebra $A$ of a given type and a Boolean algebra $B$, the \emph{Boolean power of $A$ by $B$} is the algebra $C(X,A_\mathrm{disc})$ of all continuous functions from the Stone space $X$ of $B$ to $A$, where $A$ is given the discrete topology and the operations of $A$ are lifted to $C(X,A_\mathrm{disc})$ pointwise (see, e.g., \cite{BN80,BS81}). For convenience, we also refer to $C(X,A_\mathrm{disc})$ as the \emph{Boolean power of $A$ by $X$}. Boolean powers turned out to be a very useful tool in universal algebra, where they have been used to transfer results about Boolean algebras to other varieties \cite{BS81}.

There is no obvious way to generalize the Boolean power construction to compact Hausdorff spaces. Since $X$ is compact and $A$ is discrete, each $f\in C(X,A_\mathrm{disc})$ is finitely valued, and gives a partition of $X$ into finitely many clopen (closed and open) sets. So if there are not  enough clopens in $X$, then $C(X,A_\mathrm{disc})$ is not representative enough. For example, if $X=[0,1]$, then $C(X,A_\mathrm{disc})$ degenerates to simply $A$. The goal of this article is to generalize the Boolean power construction in such a way that it encompasses compact Hausdorff spaces. For this, instead of working with clopen sets, which form a basis only in the zero-dimensional case, we will work with regular open sets, which form a basis for any compact Hausdorff space.

One of the most natural generalizations of Stone duality to compact Hausdorff spaces is de Vries duality \cite{deV62}. We recall that a binary relation $\prec$ on a Boolean algebra $B$ is a \emph{proximity} if it satisfies the following axioms:
\begin{enumerate}
\item[(DV1)] $1\prec 1$.
\item[(DV2)] $a\prec b$ implies $a\le b$.
\item[(DV3)] $a\le b\prec c\le d$ implies $a\prec d$.
\item[(DV4)] $a\prec b,c$ implies $a\prec b\wedge c$.
\item[(DV5)] $a\prec b$ implies $\neg b\prec \neg a$.
\item[(DV6)] $a\prec b$ implies there is $c\in B$ such that $a\prec c\prec b$.
\item[(DV7)] $a\ne 0$ implies there is $0\ne b\in B$ such that $b\prec a$.
\end{enumerate}
A \emph{proximity Boolean algebra} is a pair $(B,\prec)$, where $B$ is a Boolean algebra and $\prec$ is a proximity on $B$, and a \emph{de Vries algebra} is a proximity Boolean algebra such that $B$ is complete as a Boolean algebra.

By de Vries duality, each compact Hausdorff space $X$ gives rise to the de Vries algebra $(\mathcal{RO}(X),\prec)$, where $\mathcal{RO}(X)$ is the complete Boolean algebra of regular open subsets of $X$, the Boolean operations on $\mathcal{RO}(X)$ are given by $\bigvee U_i = \int\left(\cl\left(\bigcup_i U_i\right)\right)$, $\bigwedge U_i = \int\left(\bigcap_i U_i\right)$, and $\lnot U = \int(X\backslash U)$, and the proximity is given by $U\prec V$ iff ${\sf Cl}(U)\subseteq V$, where $\int$ and $\cl$ are the interior and closure operators. Moreover, each de Vries algebra $(B,\prec)$ is isomorphic to the de Vries algebra $(\mathcal{RO}(X),\prec)$ for a unique (up to homeomorphism) compact Hausdorff space $X$. This 1-1 correspondence extends to a dual equivalence between the categories of de Vries algebras and compact Hausdorff spaces. To define the category of de Vries algebras, we recall that a map $\sigma:B\to C$ between proximity Boolean algebras is a \emph{de Vries morphism} provided
\begin{enumerate}
\item[(M1)] $\sigma(0)=0$.
\item[(M2)] $\sigma(a\wedge b)=\sigma(a)\wedge\sigma(b)$.
\item[(M3)] $a\prec b$ implies $\neg\sigma(\neg a)\prec\sigma(b)$.
\item[(M4)] $\sigma(a)$ is the least upper bound of $\{\sigma(b):b\prec a\}$.
\end{enumerate}
Note that function composition of two de Vries morphisms need not be a de Vries morphism because it need not satisfy (M4). Nevertheless, the de Vries algebras and de Vries morphisms between them form a category {\bf DeV}, where the composition $\rho\star\sigma$ of two de Vries morphisms $\sigma:B_1\to B_2$ and $\rho:B_2\to B_3$ is given by
\[
(\rho \star \sigma)(a)=\bigvee\{\rho\sigma(b): b\prec a\}.
\]
Each continuous function $\varphi:X\to Y$ between compact Hausdorff spaces $X,Y$ gives rise to the de Vries morphism $\widehat{\varphi}:\mathcal{RO}(Y)\to\mathcal{RO}(X)$, where $\widehat{\varphi}(U)={\sf Int}\left({\sf Cl}\left(\varphi^{-1}(U)\right)\right)$ for each $U\in\mathcal{RO}(Y)$. Moreover, each de Vries morphism between de Vries algebras comes about this way. The upshot of all this is that {\bf DeV} is dually equivalent to the category {\bf KHaus} of compact Hausdorff spaces and continuous maps, which is one of the key results of \cite{deV62}.

We wish to use de Vries duality to define the de Vries power of an algebra by a compact Hausdorff space the same way Stone duality is used to define the Boolean power of an algebra by a Stone space. As a motivating example, let $X$ be a compact Hausdorff space and let $f:X\to\mathbb R$ be a finitely valued function. If $f$ is continuous, then $f^{-1}(a,\infty)$ is clopen in $X$ for each $a\in\mathbb R$. On the other hand, we show that $f^{-1}(a,\infty)$ is regular open for all $a \in \mathbb{R}$ iff $f$ is a normal function, where we recall that a lower semicontinuous function $f$ is normal provided $f^{-1}(-\infty,a)$ is a union of regular closed sets for each $a\in\mathbb R$ \cite[Sec.~3]{Dil50}. Since for a finitely valued function $f$, we have $f^{-1}(a,\infty)=f^{-1}[b,\infty)$ for some $b>a$ (and $f^{-1}(-\infty,a)=f^{-1}(-\infty,b]$ for some $b<a$), this observation allows us to generalize the concept of a finitely valued normal function as follows.

Let $A$ be a totally ordered algebra of a given type, let $X$ be a compact Hausdorff space, and let $f:X\to A$ be a finitely valued function. We call $f$ \emph{normal} if $f^{-1}(\up a)$ is regular open in $X$ for each $a\in A$, where $\up a=\{b\in A:a\le b\}$. Let $FN(X,A)$ be the set of finitely valued normal functions from $X$ to $A$. For a finitely valued function $f:X\to A$, we introduce the concept of \emph{normalization} of $f$, and show that normalization lifts the operations of $A$ to $FN(X,A)$. Thus, $FN(X,A)$ has the algebra structure of $A$. In addition, $FN(X,A)$ has a canonical proximity given by $f\prec g$ iff $f^{-1}(\up a)\prec g^{-1}(\up a)$ in $\mathcal{RO}(X)$ for each $a\in A$. We call the pair $(FN(X,A),\prec)$ the \emph{de Vries power of $A$ by $X$}. Equivalently, if $(B,\prec)$ is a de Vries algebra and $X$ is its dual compact Hausdorff space, then we call $(FN(X,A),\prec)$ the \emph{de Vries power of $A$ by $(B,\prec)$}. We show that when $X$ is a Stone space, this construction yields the Boolean power construction.

The main goal of this article is to axiomatize de Vries powers of a totally ordered integral domain, thus including such classic cases as $\mathbb Z,\mathbb Q$, and $\mathbb R$. Our results generalize several known results in the literature. Boolean powers of $\mathbb Z$ were studied by Ribenboim \cite{Rib69}. They turn out to be exactly the Specker $\ell$-groups introduced and studied by Conrad \cite{Con74}. On the other hand, Boolean powers of $\mathbb R$ are the Specker $\mathbb R$-algebras introduced and studied in \cite{BMO13a}. The category of Specker $\mathbb R$-algebras is dually equivalent to the category of Stone spaces, and this duality can be thought of as an economic version of Gelfand-Neumark-Stone duality in the particular case of Stone spaces \cite[Rem.~6.9]{BMO13a}. In \cite{BMMO13a}, these results were generalized to axiomatize Boolean powers of a commutative ring.

Let $A$ be a commutative ring with $1$, let $S$ be a commutative $A$-algebra with $1$, and let $\mathrm{Id}(S)$ be the Boolean algebra of idempotents of $S$. A nonzero $e\in\mathrm{Id}(S)$ is \emph{faithful} provided $ae=0$ implies $a=0$ for each $a\in A$. We call $S$ a \emph{Specker $A$-algebra} if $S$ is generated as an $A$-algebra by a Boolean subalgebra $B$ of $\mathrm{Id}(S)$ whose nonzero elements are faithful. In case $A$ is an integral domain, $S$ is a Specker $A$-algebra iff $S$ is generated as an $A$-algebra by $\mathrm{Id}(S)$ and $S$ is torsion-free as an $A$-module \cite[Prop.~4.1]{BMMO13a}. By \cite[Thm.~2.7]{BMMO13a}, Boolean powers of $A$ are precisely Specker $A$-algebras. Moreover, if $A$ is a domain (or more generally if $A$ is an indecomposable ring; that is, if $\mathrm{Id}(A)=\{0,1\}$), then the category of Specker $A$-algebras is equivalent to the category of Boolean algebras, and is dually equivalent to the category of Stone spaces \cite[Thm.~3.8 and Cor.~3.9]{BMMO13a}.

In this article, for a totally ordered domain $A$, we enrich the concept of a Specker $A$-algebra to that of a proximity Specker $A$-algebra, and show that a de Vries power of a totally ordered domain is precisely a proximity Specker $A$-algebra that is also a Baer ring. We prove that each proximity Specker $A$-algebra $(S,\prec)$ can be represented as a dense subalgebra of $(FN(X,A),\prec)$ for a unique (up to homeomorphism) compact Hausdorff space $X$. We also prove that $(S,\prec)$ is isomorphic to $(FN(X,A),\prec)$ iff $S$ is a Baer ring. We introduce proximity morphisms between proximity Specker $A$-algebras, and show that the proximity Baer Specker $A$-algebras with proximity morphisms between them form a category ${\bf PBSp}_A$ that is equivalent to {\bf DeV} and is dually equivalent to {\bf KHaus}. In fact, the functor ${\bf KHaus}\to{\bf PBSp}_A$ is the de Vries power functor, while the functor ${\bf PBSp}_A\to{\bf KHaus}$ associates with each proximity Baer Specker $A$-algebra $(S,\prec)$, the compact Hausdorff space of ends of $(S,\prec)$. The obtained duality provides an analogue of de Vries duality for proximity Baer Specker $A$-algebras.

The article is organized as follows. In Section 2 we introduce finitely valued normal functions and establish their basic properties. In Section 3, for a totally ordered algebra $A$, we generalize the notion of a Boolean power of $A$ to that of a de Vries power of $A$. In Section 4 we specialize to the case of a totally ordered integral domain $A$, introduce the notion of a proximity Specker $A$-algebra, and show that a de Vries power of $A$ is a proximity Baer Specker $A$-algebra. In Section 5 we prove our main representation theorem that every proximity Specker $A$-algebra $(S,\prec)$ embeds in a de Vries power of $A$, and that the embedding is an isomorphism iff $S$ is Baer. In Section 6 we introduce proximity morphisms. For proxmity Specker $A$-algebras $(S, \prec)$ and $(T, \prec)$, we prove that there is a 1-1 correspondence between proximity morphisms $S \to T$, de Vries morphisms $\Id(S) \to \Id(T)$, and continuous maps $Y \to X$, where $X$ and $Y$ are the de Vries duals of $\Id(S)$ and $\Id(T)$, respectively. In Section 7 we introduce ends of a proximity Specker $A$-algebra $(S, \prec)$, give several characterizations of ends, and show that the space of ends of $(S, \prec)$ is homeomorphic to the de Vries dual of $\Id(S)$. Finally, in Section 8 we prove that the proximity Baer Specker $A$-algebras form a category that is equivalent to the category of de Vries algebras and is dually equivalent to the category of compact Hausdorff spaces.

\section{Finitely valued normal functions}

Throughout this section we assume that $X$ is a compact Hausdorff space and $A$ is a totally ordered set. In Section~3 we specialize to the case in which $A$ is a totally ordered algebra of a given type, and in Section~4 to the case when it is an integral domain. In this section though the algebraic structure of $A$ plays no role.

For $a\in A$, let $\up a=\{b\in A:a\le b\}$, $\down a=\{b\in A:b\le a\}$, and $[a,b]=\up a\cap\down b=\{x\in A:a\le x\le b\}$. We write $a<b$ provided $a\le b$ and $a\ne b$. We topologize $A$ with the interval topology. In this topology closed intervals $[a,b]$ form a basis of closed sets.

\begin{notation} \label{notation}
\begin{enumerate}
\item[]
\item We denote by $F(X)= F(X,A)$ the set of all finitely valued functions from $X$ to $A$; that is, $F(X)$ is the set of all functions $f : X \to A$ whose image is finite.
\item We denote by $FC(X)=FC(X,A)$ the set of all finitely valued continuous functions from $X$ to $A$, where $A$ has the interval topology. As follows from \cite[Prop.~5.4]{BMMO13a}, $FC(X,A)=C(X,A_{\mathrm{disc}})$.
\item For nonempty $X$, each $a \in A$ gives rise to the constant function on $X$ whose value is $a$. Clearly this function is in $FC(X)$, and we will view $A$ as a subset of $FC(X)$.
\end{enumerate}
\end{notation}

Under the pointwise order, $F(X)$ is a lattice, where the join and meet operations are also pointwise: $\sup(f,g)(x) = \max\{f(x),g(x)\}$ and $\inf(f,g)(x) = \min\{f(x),g(x)\}$. Clearly $FC(X)$ is a sublattice of $F(X)$.

We make frequent use of the simple observation that a finitely valued function on $X$ can alternatively be viewed as a function from $A$ to the powerset of $X$. We formalize this in the following lemma. If $U$ is a subset of $X$, we denote by $\chi_U$ the characteristic function of $U$.

\begin{lemma}\label{easy first}
\begin{enumerate}
\item[]
\item If $f \in F(X)$ and $a_0 < \cdots < a_n$ are the values of $f$, set $U_i = f^{-1}(\up a_i)$ for $0 \le i \le n$ and $U_{n+1} = \varnothing$. Then $X = U_0 \supset U_1 \supset  \cdots \supset U_n \supset U_{n+1} = \varnothing$. Moreover, $f(x) = a_i$ iff $x \in U_i - U_{i+1}$, and $f = a_0 + \sum_{i=1}^{n} (a_{i} - a_{i-1}) \chi_{U_{i}}$.
\item Conversely, if $X = U_0 \supset U_1 \supset \cdots \supset U_n \supset U_{n+1} = \varnothing$ and  $a_0 < \cdots < a_n$ are elements of $A$, then the function $f : X \to A$ defined by $f(x) = a_i$ if $x \in U_i - U_{i+1}$ is finitely valued and $f^{-1}(\up a_i) = U_i$.
\end{enumerate}
\end{lemma}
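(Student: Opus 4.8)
The plan is to prove both parts by unravelling the definitions, using the fact that for a finitely valued function the sets $\up a_i$ partition the image into an increasing-preimage filtration.

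For part (1), I would start with $f\in F(X)$ with values $a_0<\cdots<a_n$ and set $U_i=f^{-1}(\up a_i)$. Since $\up a_0=A$ (as $a_0$ is the least value), $U_0=X$; since no value lies in $\up a_{n+1}$ for any hypothetical larger threshold, and more directly since $U_{n+1}$ is defined to be $\varnothing$, the chain is anchored at both ends. The inclusions $U_0\supseteq U_1\supseteq\cdots\supseteq U_n$ follow because $a_0<a_1<\cdots<a_n$ implies $\up a_0\supseteq\up a_1\supseteq\cdots\supseteq\up a_n$, hence the preimages are nested; the inclusions are strict because each $a_i$ is actually attained, so $f^{-1}(a_i)=U_i\setminus U_{i+1}\ne\varnothing$ (here $U_{i}\setminus U_{i+1}=f^{-1}(\up a_i)\setminus f^{-1}(\up a_{i+1})=f^{-1}(\up a_i\setminus\up a_{i+1})$, and since the values are linearly ordered, $\up a_i\setminus\up a_{i+1}\cap\{a_0,\dots,a_n\}=\{a_i\}$). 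This simultaneously proves the claim $f(x)=a_i\iff x\in U_i\setminus U_{i+1}$. Finally, the formula $f=a_0+\sum_{i=1}^n(a_i-a_{i-1})\chi_{U_i}$ is checked pointwise: if $x\in U_i\setminus U_{i+1}$ then exactly the terms $\chi_{U_1}(x),\dots,\chi_{U_i}(x)$ equal $1$ and the rest vanish, so the right-hand side telescopes to $a_0+(a_1-a_0)+\cdots+(a_i-a_{i-1})=a_i=f(x)$. This last step requires that $A$ carry an algebra structure making the expression meaningful, which it does once we are in the setting of Section~3; in the purely ordered setting of this section the formula should be read as placeholder notation, or one simply omits it.

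For part (2), given the chain $X=U_0\supset U_1\supset\cdots\supset U_n\supset U_{n+1}=\varnothing$ and $a_0<\cdots<a_n$, the prescription $f(x)=a_i$ for $x\in U_i\setminus U_{i+1}$ is well-defined and total because the sets $U_i\setminus U_{i+1}$, $0\le i\le n$, partition $X$ (disjointness from nestedness, covering because $U_0=X$ and $U_{n+1}=\varnothing$), and $f$ takes only the finitely many values $a_0,\dots,a_n$, so $f\in F(X)$. To see $f^{-1}(\up a_i)=U_i$: we have $f(x)\in\up a_i$ iff $f(x)=a_j$ for some $j\ge i$ iff $x\in U_j\setminus U_{j+1}$ for some $j\ge i$ iff $x\in\bigcup_{j\ge i}(U_j\setminus U_{j+1})=U_i$ (the last equality again because the chain is nested and terminates at $\varnothing$).

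There is no real obstacle here; the lemma is a bookkeeping translation between finitely valued functions and finite decreasing chains of subsets. The only point demanding a word of care is the ``Moreover'' formula in part (1), whose interpretation depends on the ambient structure of $A$; I would either defer it with a remark that it is to be understood in the algebra setting of the next section, or verify it pointwise by the telescoping computation above.
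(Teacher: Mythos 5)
Your proof is correct and is exactly the routine verification the paper has in mind (its own proof is simply ``Straightforward''): the nested-chain bookkeeping, the telescoping check of the formula, and the partition argument in part (2) are all sound. Your side remark about the formula $a_0+\sum_{i=1}^{n}(a_i-a_{i-1})\chi_{U_i}$ requiring algebraic structure on $A$ is a fair observation, and the paper indeed only invokes that formula once $A$ carries the relevant operations.
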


\begin{proof}
Straightforward.
\end{proof}

Therefore, to define a finitely valued function on $X$, it suffices to produce a finite sequence $a_0 < \cdots < a_n$ in $A$ and a finite sequence $X = U_0 \supset U_1 \supset \cdots \supset U_n \supset \varnothing$ of subsets of $X$. The next lemma shows that two elements $f,g \in F(X)$ can be described in a compatible way.

\begin{lemma}\label{refine}
Let $f,g \in F(X)$. If the values of $f$ and $g$ are among $a_0 < \cdots < a_n$ and $a_{n+1} \in A$ satisfies $a_n < a_{n+1}$, then $f(x) = a_i$ if $x \in f^{-1}(\up a_i) - f^{-1}(\up a_{i+1})$ and $g(x) = a_i$ if $x \in g^{-1}(\up a_i) - g^{-1}(\up a_{i+1})$. Furthermore, $f \le g$ iff $f^{-1}(\up a_i) \subseteq g^{-1}(\up a_i)$ for each $i$. Consequently, $f = g$ iff $f^{-1}(\up a_i) = g^{-1}(\up a_i)$ for each $i$.
 \end{lemma}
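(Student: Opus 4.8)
The plan is to prove Lemma~\ref{refine} by reducing everything to the bookkeeping set up in Lemma~\ref{easy first}. Given $f,g \in F(X)$ whose values all lie among $a_0 < \cdots < a_n$, the key point is that padding the list of values with extra elements of $A$ does not change the sets $f^{-1}(\up a_i)$ in an essential way: if $a_j$ is not actually a value of $f$, then $f^{-1}(\up a_j) = f^{-1}(\up a_{j+1})$ when $a_j$ sits strictly between two consecutive values, and more generally $f^{-1}(\up a_j)$ still equals the set of $x$ with $f(x) \ge a_j$. So the first step is to observe that for any $a \in A$ with $a \le a_i$ for the relevant values, one has $f^{-1}(\up a_i) - f^{-1}(\up a_{i+1}) = f^{-1}(\{a_i\})$ precisely when $a_i$ is a value of $f$ and equals $\varnothing$ otherwise; in either case $f(x) = a_i$ holds for $x$ in this difference set, which is the first assertion. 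The same argument applies verbatim to $g$.

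For the order characterization, I would argue both directions. If $f \le g$ pointwise, then $x \in f^{-1}(\up a_i)$ means $a_i \le f(x) \le g(x)$, so $x \in g^{-1}(\up a_i)$; hence $f^{-1}(\up a_i) \subseteq g^{-1}(\up a_i)$ for each $i$. Conversely, suppose $f^{-1}(\up a_i) \subseteq g^{-1}(\up a_i)$ for all $i$, and fix $x \in X$; let $i$ be the largest index with $x \in f^{-1}(\up a_i)$ (such an $i$ exists since $U_0 = X$), so that $f(x) = a_i$ by part (1) of Lemma~\ref{easy first}. Then $x \in g^{-1}(\up a_i)$ by hypothesis, so $g(x) \ge a_i = f(x)$. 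Since $x$ was arbitrary, $f \le g$. The statement about equality is then immediate: $f = g$ iff $f \le g$ and $g \le f$, which by the order characterization is equivalent to $f^{-1}(\up a_i) \subseteq g^{-1}(\up a_i)$ and $g^{-1}(\up a_i) \subseteq f^{-1}(\up a_i)$ for all $i$, i.e.\ $f^{-1}(\up a_i) = g^{-1}(\up a_i)$ for all $i$.

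The role of the extra element $a_{n+1}$ with $a_n < a_{n+1}$ is purely to make the formula $f(x) = a_i$ for $x \in f^{-1}(\up a_i) - f^{-1}(\up a_{i+1})$ uniform across all indices $i = 0, \dots, n$, including $i = n$, where $f^{-1}(\up a_{n+1}) = \varnothing$ since $a_{n+1}$ exceeds every value; this matches the convention $U_{n+1} = \varnothing$ in Lemma~\ref{easy first}. I do not expect any serious obstacle here: the proof is a direct unwinding of definitions together with the elementary fact that a finitely valued function is determined by the descending chain of its upper-level sets, which is exactly the content of Lemma~\ref{easy first}. The one place to be slightly careful is handling indices $i$ for which $a_i$ is not a value of $f$ (or of $g$): there the difference set is empty and the asserted equality $f(x) = a_i$ is vacuous, so nothing goes wrong, but it is worth noting explicitly so that the reader sees why allowing a common over-list of values is harmless.
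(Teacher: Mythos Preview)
Your proposal is correct and is exactly the kind of direct unwinding of definitions that the paper has in mind; the paper's own proof is simply ``Straightforward.'' There is nothing to add or change.
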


\begin{proof}
Straightforward.
\end{proof}

In \cite{Dil50} Dilworth described the Dedekind-MacNeille completion of the lattice $C(X,{\mathbb{R}})$ of continuous real-valued functions by means of \emph{normal functions}; that is, lower semicontinuous functions $f : X \to \mathbb{R}$ for which $f^{-1}(-\infty,a)$ is a union of regular closed sets for each $a \in \mathbb{R}$ (see \cite[Thm.\ 3.2]{Dil50}; note that Dilworth worked with upper semicontinuous functions). We adapt Dilworth's notion of normal function to the setting of functions with finitely many values in $A$. To motivate our definition, we first describe finitely valued normal functions in the special case in which $A = {\mathbb{R}}$; this description is not needed later in the paper, but see \cite{BMO13c} for a development of proximity in the setting of real-valued normal functions.

\begin{proposition}\label{prop:2.4}
Let $f : X \to \mathbb{R}$ be finitely valued. The following conditions are equivalent.
\begin{enumerate}
\item $f$ is normal.
\item $f^{-1}(a,\infty)$ is regular open in $X$ for each $a \in \mathbb{R}$.
\item $f^{-1}[a,\infty)$ is regular open in $X$ for each $a \in \mathbb{R}$.
\end{enumerate}
\end{proposition}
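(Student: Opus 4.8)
The plan is to reduce all three conditions to a single statement about the finite family $U_i = f^{-1}(\up a_i)$, where $a_0 < \cdots < a_n$ are the values of $f$ and, as in Lemma~\ref{easy first}, $X = U_0 \supset U_1 \supset \cdots \supset U_n \supset U_{n+1} = \varnothing$. First I would record the elementary bookkeeping: since $f$ takes only the values $a_0,\dots,a_n$, for an arbitrary $a \in \mathbb{R}$ the set $f^{-1}(a,\infty)$ equals $U_j$, where $j$ is the least index with $a_j > a$ (and equals $\varnothing$ if there is no such index), and similarly $f^{-1}[a,\infty) = U_j$ with $j$ the least index with $a_j \ge a$. Thus, as $a$ ranges over $\mathbb{R}$, the families $\{f^{-1}(a,\infty) : a \in \mathbb{R}\}$ and $\{f^{-1}[a,\infty) : a \in \mathbb{R}\}$ both equal $\{U_0,\dots,U_{n+1}\}$, while $\{f^{-1}(-\infty,a) : a \in \mathbb{R}\}$ equals $\{X \setminus U_i : 0 \le i \le n+1\}$. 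Since $U_0 = X$ and $U_{n+1} = \varnothing$ are clopen, hence regular open, conditions (2) and (3) are each equivalent to the assertion that every $U_i$ is regular open; in particular (2) $\Leftrightarrow$ (3).

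For the link with (1), the one point needing a short argument is this: a closed subset $C$ of $X$ is a union of regular closed sets if and only if $C$ is itself regular closed. The ``if'' direction is immediate; for ``only if'', if $C = \bigcup_\alpha C_\alpha$ with each $C_\alpha = \cl(\int(C_\alpha))$, then $C_\alpha \subseteq \cl(\int(C))$ for every $\alpha$ by monotonicity of the interior and closure operators, and hence $C \subseteq \cl(\int(C)) \subseteq \cl(C) = C$, so $C = \cl(\int(C))$.

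With these in hand the equivalences assemble quickly. Recall that $f$ is normal iff $f$ is lower semicontinuous and $f^{-1}(-\infty,a)$ is a union of regular closed sets for each $a$, and note that for finitely valued $f$ lower semicontinuity amounts to each $U_i$ being open. If (1) holds, then each $U_i$ is open, hence each $X \setminus U_i$ is closed and, by normality together with the bookkeeping above, a union of regular closed sets; by the observation each $X \setminus U_i$ is then regular closed, so each $U_i$ is regular open, which is (2) and (3). Conversely, if (3) holds then every $U_i$ is regular open, hence open, so $f$ is lower semicontinuous, and each $f^{-1}(-\infty,a) = X \setminus U_j$ is regular closed, in particular a union of regular closed sets, so $f$ is normal. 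I do not expect a genuine obstacle here: the proposition is essentially a translation exercise, and the only mildly delicate ingredient is the closed-set characterization of ``union of regular closed sets'', which itself rests on the standard inclusion $C \subseteq \cl(\int(C))$ for regular closed $C$ together with the monotonicity of closure.
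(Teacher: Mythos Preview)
Your proof is correct and follows essentially the same approach as the paper: both arguments rest on the observation that a closed set written as a union of regular closed sets is itself regular closed, and both exploit finite-valuedness to identify the families $\{f^{-1}(a,\infty)\}$ and $\{f^{-1}[a,\infty)\}$ with the finite chain of level sets. Your presentation is organized a bit more explicitly around the finite family $U_0,\dots,U_{n+1}$, but the mathematical content is the same.
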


\begin{proof}
(2)$\Leftrightarrow$(3): Since $f$ is finitely valued, for each $a\in\mathbb R$ there is $b>a$ with $f^{-1}(a,\infty) = f^{-1}[b,\infty)$; we may choose $b$ to be the smallest value of $f$ greater than $a$ if such a value exists, or else $b$ may be chosen to be any real number larger than $a$. (Similarly, $f^{-1}(-\infty,a)=f^{-1}(-\infty,b]$ for some $b<a$.) From this it is evident that conditions (2) and (3) are equivalent.

(1)$\Rightarrow$(2): It is known that a bounded real-valued function $f$ is lower semicontinuous iff $f^{-1}(a,\infty)$ is open for each $a \in \mathbb{R}$, and that a lower semicontinuous function $f$ is normal iff $f^{-1}(-\infty,a)$ is a union of regular closed sets for each $a \in \mathbb{R}$ \cite[Thm.~3.2]{Dil50}. Suppose $f$ is normal. Let $a \in \mathbb{R}$. Since $f$ is lower semicontinuous, $f^{-1}(a,\infty)$ is open in $X$. Thus, $f^{-1}(-\infty,a] = X - f^{-1}(a,\infty)$ is closed. Because $f$ is finitely valued, there is $c \in \mathbb{R}$ with $f^{-1}(-\infty, a] = f^{-1}(-\infty, c)$. Since $f$ is normal and regular closed sets are closures of open sets, there is a family $\{U_i\}$ of open sets such that $f^{-1}(-\infty, c) = \bigcup_i{\sf Cl}(U_i)$. Let $U=\bigcup_i U_i$, an open set. Clearly $U\subseteq f^{-1}(-\infty, c)$ and, as $f^{-1}(-\infty, c)$ is closed, ${\sf Cl}(U)\subseteq f^{-1}(-\infty, c)$. On the other hand, $U_i\subseteq U$ implies ${\sf Cl}(U_i)\subseteq{\sf Cl}(U)$, so $f^{-1}(-\infty, c)=\bigcup_i{\sf Cl}(U_i)\subseteq{\sf Cl}(U)$. Therefore, $f^{-1}(-\infty, c)={\sf Cl}(U)$, and as $U$ is open, $f^{-1}(-\infty, c) = f^{-1}(-\infty, a]$ is regular closed. Thus, its complement $f^{-1}(a,\infty)$ is regular open.


(2)$\Rightarrow$(1): Suppose that $f^{-1}(a,\infty)$ is regular open in $X$ for each $a\in\mathbb R$. Then it is clear that $f$ is lower semicontinuous. In addition, since $f^{-1}(-\infty,a)=f^{-1}(-\infty,b]$ for some $b<a$, and $f^{-1}(-\infty,b]=X-f^{-1}(b,\infty)$, which is regular closed as $f^{-1}(b,\infty)$ is regular open, we see that $f$ is normal.
\end{proof}

We use this characterization of finitely valued normal functions $f:X\to\mathbb R$ to define finitely valued normal functions $f:X\to A$, where $A$ is an arbitrary totally ordered set.

\begin{definition}
We define a finitely valued function $f : X \to A$ to be {\it normal} provided $f^{-1}(\up a)$ is regular open for each $a \in A$. We denote by $FN(X)=FN(X,A)$ the set of all finitely valued normal functions from $X$ to $A$.
\end{definition}

\begin{remark} \label{decreasing}
If $f\in F(X)$, with $a_0 < \cdots < a_n$ the values of $f$ and $U_i=f^{-1}(\up a_i)$, then $f\in FN(X)$ iff each $U_i$ is regular open. Thus, if each $U_i$ is regular open, then Lemma~\ref{easy first}(1) implies that $a_0 + \sum_{i=1}^n (a_i - a_{i-1}) \chi_{U_i}$ is normal. We will use this fact throughout.
\end{remark}

While a function in $FN(X)$ need not be continuous, the next proposition shows it is continuous on an open dense subset of $X$. This relationship between finitely valued normal functions and continuous functions on open dense subsets is also considered in Proposition~\ref{normalization exists}, and is made more explicit in Theorem~\ref{direct limit theorem}. We remind the reader that we are using the interval topology on $A$, and that $FC(X) = C(X, A_{\func{disc}})$ as pointed out in Notation~\ref{notation}.

\begin{proposition} \label{continuousondense}
If $f \in FN(X)$, then $f$ is continuous on an open dense subset of $X$.
\end{proposition}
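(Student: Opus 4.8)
The plan is to exhibit an explicit open dense subset $D\subseteq X$ on which $f$ is locally constant, and hence continuous. Following Lemma~\ref{easy first}, write the values of $f$ as $a_0<\cdots<a_n$ and set $U_i=f^{-1}(\up a_i)$, so that $X=U_0\supset U_1\supset\cdots\supset U_n\supset U_{n+1}=\varnothing$, each $U_i$ is regular open by Remark~\ref{decreasing}, and $f(x)=a_i$ precisely when $x\in U_i-U_{i+1}$. Since the image of $f$ is a finite, hence discrete, subspace of $A$ (in the interval topology), $f$ is continuous at a point $x$ iff $f$ is constant on some open neighborhood of $x$; equivalently, $f$ is continuous exactly on $\bigcup_{i=0}^n\int(U_i-U_{i+1})=\bigcup_{i=0}^n\bigl(U_i-\cl(U_{i+1})\bigr)$, an open set.

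Next I would show this set is dense by bounding its complement. A point $x$ with $f(x)=a_i$ fails to be a point of continuity of $f$ iff $x\in U_i\cap\bigl(\cl(U_{i+1})-U_{i+1}\bigr)$, so the set of points of discontinuity is contained in $N:=\bigcup_{j=1}^{n}\bigl(\cl(U_j)-U_j\bigr)$. For each $j$, the boundary $\cl(U_j)-U_j$ of the open set $U_j$ is closed with empty interior (every nonempty open subset of $\cl(U_j)$ meets $U_j$), hence nowhere dense; as a finite union of nowhere dense sets is nowhere dense, $N$ is closed and nowhere dense. Thus $D:=X-N$ is open and dense. Finally, for $x\in D$ with $f(x)=a_i$ we have $x\in U_i$, $x\notin U_{i+1}$, and $x\notin\cl(U_{i+1})-U_{i+1}$, whence $x\notin\cl(U_{i+1})$ (automatic when $i=n$, as $U_{n+1}=\varnothing$); then $W:=U_i-\cl(U_{i+1})$ is an open neighborhood of $x$ contained in $f^{-1}(a_i)$, so $f$ is continuous at $x$ and $f|_D$ is continuous.

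I do not anticipate a genuine obstacle here; the only points requiring care are the extreme index $i=n$ (where $U_{n+1}=\varnothing$ makes $\cl(U_{n+1})$ trivial) and the degenerate case $n=0$ of a constant $f$ (where $N=\varnothing$ and $D=X$). It is worth noting that normality of $f$ enters only via Remark~\ref{decreasing}, i.e.\ only to know each $U_i$ is regular open; in fact the argument shows that \emph{every} finitely valued function on $X$ is continuous on a dense open subset, with the $U_i$ merely open.
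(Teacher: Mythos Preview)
Your proof is correct and follows essentially the same approach as the paper: both exhibit the open set $\bigcup_{i=0}^{n}\bigl(U_i-\cl(U_{i+1})\bigr)$ (which coincides with your $D=X\setminus N$) as the locus of continuity and show it is dense, the only difference being that you argue density via nowhere-denseness of the boundaries $\partial U_j$ while the paper gives a direct argument that any nonempty open set meets $U$. One small caveat on your closing remark: the statement that ``every finitely valued function on $X$ is continuous on a dense open subset'' is false as written (consider $\chi_{\mathbb{Q}\cap[0,1]}$ on $[0,1]$); what your argument actually establishes is that openness of each $U_i$ suffices, regular openness not being needed.
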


\begin{proof}
Let $a_0 < \cdots < a_n$ be the values of $f$, and let $U_i = f^{-1}(\up a_i)$. Then each $U_i$ is regular open. We show that $f$ is continuous on $U := \left(\bigcup_{i=0}^{n-1} \left(U_i - {\sf Cl}(U_{i+1})\right)\right) \cup U_n$, and that this union is open dense in $X$. For continuity, since $f\left(U_i - {\sf Cl}(U_{i+1})\right) = \{a_i\}$ and $f(U_n) = \{a_n\}$, we see that $f$ is constant, hence continuous on the open set $U_i - {\sf Cl}(U_{i+1})$ for each $i$, as well as on the open set $U_n$. Therefore, $f$ is continuous on the open set $U$. To prove density, let $V$ be a nonempty open subset of $X$. There is a smallest $m>1$ with $V \cap U_m = \varnothing$. Then $V \cap U_{m-1} \ne \varnothing$ and $V \cap {\sf Cl}(U_m) = \varnothing$. Therefore, $V \cap U \ne \varnothing$. Thus, $U$ is open dense in $X$.
\end{proof}

\begin{definition}\label{def:2.7}
Let $f\in F(X)$ and let $a_0 < \cdots < a_n$ be the values of $f$. For each $i =0,\ldots, n$, set $U_i = {\sf Int}\left({\sf Cl}\left(f^{-1}(\up a_i)\right)\right)$, and let $U_{n+1} = \varnothing$. Define  $f^\#:X\to A$ by $f^\#(x) = a_i$ provided $x \in U_i - U_{i+1}$. By Lemma~\ref{easy first}(2), $f^\# \in FN(X)$, and we call $f^\#$ the {\it normalization} of $f$.
\end{definition}

\begin{remark} \label{immediate facts}
For $f \in F(X)$, the following facts are immediate:
\begin{enumerate}
\item $f \in FN(X)$ iff $f^\# = f$.
\item If $f=\chi_U$ for $U\subseteq X$, then $f^\#=\chi_{{\sf Int}\left({\sf Cl}(U)\right)}$. More generally, write $f = a_0 + \sum_{i=1}^n (a_i - a_{i-1}) \chi_{U_i}$ as in Lemma~\ref{easy first}(1). Then $f^\# = a_0 + \sum_{i=1}^n (a_i - a_{i-1}) \chi_{\int(\cl(U_i))}$.
\item The image of $f^\#$ is contained in the image of $f$.
\item If $f \in FC(X)$, then $f$ is normal.
\end{enumerate}
\end{remark}

Let $U$ be a nonempty subset of $X$ and let $f \in F(U)$.  Replacing $X$ by $U$ and using the same idea as in Definition~\ref{def:2.7} allows us to define $f^\#\in FN(X)$. Then $f^\#$ is characterized by  $(f^\#)^{-1}(\up a) = {\sf Int}\left({\sf Cl}\left(f^{-1}(\up a)\right)\right)$ for each $a\in A$.

\begin{proposition} \label{normalization exists} \label{restriction} \label{uniqueext}
Let $U$ be an open dense subset of $X$ and let $f \in FC(U)$. Then $f^\#$ is the unique function in $FN(X)$ that restricts to $f$ on $U$.
\end{proposition}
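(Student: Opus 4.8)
The plan is to split the statement into an existence part (the function $f^\#$ restricts to $f$ on $U$) and a uniqueness part (any $g\in FN(X)$ with $g|_U=f$ equals $f^\#$). Throughout I would use the description of the extension recalled just before the statement, that $(f^\#)^{-1}(\up a)=\int(\cl(f^{-1}(\up a)))$ for every $a\in A$ (preimages formed in $U$, interior and closure in $X$), together with the fact that a finitely valued function is determined by the preimages of the up-sets $\up a$ (Lemma~\ref{easy first} and Lemma~\ref{refine}). Before either part I would record one elementary topological fact that drives both: if $W\subseteq X$ is open and $U$ is dense in $X$, then every point of $W$ lies in $\cl(W\cap U)$, so $\cl(W)=\cl(W\cap U)$ and hence $\int(\cl(W))=\int(\cl(W\cap U))$.

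For existence, let $a_0<\cdots<a_n$ be the values of $f$ and put $V_i=f^{-1}(\up a_i)\subseteq U$ and $U_i=\int(\cl(V_i))$, with $V_{n+1}=U_{n+1}=\varnothing$, so that by construction $f^\#(x)=a_i$ exactly when $x\in U_i-U_{i+1}$. Since $f$ is continuous and finitely valued, each $V_i$ and each level set $V_i-V_{i+1}=f^{-1}(\{a_i\})$ is clopen in $U$, hence open in $X$ because $U$ is open in $X$; also $\cl(V_0)=\cl(U)=X$, so $U_0=X$ and $f^\#$ is indeed defined on all of $X$. Now fix $x\in U$ with $f(x)=a_i$, so $x\in V_i-V_{i+1}$. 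Since $V_i$ is open in $X$ we get $V_i\subseteq\int(\cl(V_i))=U_i$, so $x\in U_i$. For the other containment take the open neighborhood $W=V_i-V_{i+1}$ of $x$: it is disjoint from $V_{i+1}$, and since $W$ is open this forces $W\cap\cl(V_{i+1})=\varnothing$, hence $W\cap U_{i+1}=\varnothing$ and $x\notin U_{i+1}$. Thus $x\in U_i-U_{i+1}$, i.e.\ $f^\#(x)=a_i=f(x)$, and so $f^\#|_U=f$.

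For uniqueness, suppose $g\in FN(X)$ restricts to $f$ on $U$. For each $a\in A$ we have $g^{-1}(\up a)\cap U=\{x\in U:f(x)\ge a\}=f^{-1}(\up a)$, and $g^{-1}(\up a)$ is regular open in $X$ since $g\in FN(X)$. Applying the density fact with $W=g^{-1}(\up a)$ then gives
\[
g^{-1}(\up a)=\int(\cl(g^{-1}(\up a)))=\int(\cl(g^{-1}(\up a)\cap U))=\int(\cl(f^{-1}(\up a)))=(f^\#)^{-1}(\up a)
\]
for every $a\in A$. As $g$ and $f^\#$ are finitely valued with the same preimages of all up-sets, Lemma~\ref{refine} yields $g=f^\#$.

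The only steps requiring genuine care are the density fact and keeping track of which subsets are open or closed in $U$ as opposed to in $X$; everything else is bookkeeping with the decreasing chain $U_0\supseteq\cdots\supseteq U_{n+1}$ and the values $a_i$. I do not expect a serious obstacle.
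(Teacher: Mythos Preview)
Your proof is correct and follows essentially the same approach as the paper's. The only minor variation is in the existence part: the paper uses that each $V_i$ is \emph{closed} in $U$ to obtain $V_i = U_i \cap U$ in one stroke, whereas you use that $V_i$ and $V_i - V_{i+1}$ are \emph{open} in $X$ to verify $x \in U_i$ and $x \notin U_{i+1}$ separately; both exploit the clopenness of the $V_i$ in $U$, and the uniqueness arguments are identical.
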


\begin{proof}
Let $a_0 < \cdots < a_n$ be the values of $f$. If $1 \le i \le n$, let $V_i = f^{-1}(\up a_i)$ and $U_i = {\sf Int}\left({\sf Cl}(V_i)\right)$, and set $U_{n+1} = V_{n+1} = \varnothing$. As $f$ is continuous and $\up a_i$ is closed, $V_i$ is closed in $U$. This yields $V_i = {\sf Cl}(V_i) \cap U$. Therefore, $V_i \subseteq U_i \cap U = {\sf Int}\left({\sf Cl}(V_i)\right)\cap U \subseteq {\sf Cl}(V_i) \cap U = V_i$, so $V_i=U_i\cap U$. Let $x \in U$, and suppose that $f(x) = a_i$. Then $x \in V_i - V_{i+1}$, and as $x \in U$, we have $x \in U_i - U_{i+1}$. Thus, $f^\#(x) = a_i = f(x)$, and so $f^\#|_U = f$.

For uniqueness, let $g \in FN(X)$ with $g|_U = f$ and let $a \in A$. Then $g^{-1}(\up a) \cap U = f^{-1}(\up a)$. Since $g^{-1}(\up a)$ is regular open and $U$ is open dense, ${\sf Cl}\left(g^{-1}(\up a)\right) = {\sf Cl}\left(g^{-1}(\up a) \cap U\right)$, so
\[
g^{-1}(\up a) = {\sf Int}\left({\sf Cl}\left(g^{-1}(\up a)\right)\right) = {\sf Int}\left({\sf Cl}\left(g^{-1}(\up a) \cap U\right)\right) = {\sf Int}\left({\sf Cl}\left(f^{-1}(\up a)\right)\right).
\]
This yields $g^{-1}(\up a) = (f^\#)^{-1}(\up a)$ for each $a\in A$, so $g = f^\#$ by Lemma~\ref{refine}.
\end{proof}

The partial order on $F(X)$ restricts to $FN(X)$. By normalizing the join and meet operations on $F(X)$, we obtain operations on $FN(X)$ which we show are the join and meet in $FN(X)$ with respect to the induced partial order on $FN(X)$.

\begin{proposition}\label{prop:2.10}
$FN(X)$ is a lattice, where the meet is the pointwise meet and the join is the normalization of the pointwise join. In other words, if $\wedge,\vee$ denote the meet and join operations on $FN(X)$, then for $f,g\in FN(X)$, we have
\[
f \wedge g = \inf(f,g) \qquad \text{and} \qquad f \vee g = \sup(f,g)^\#.
\]
\end{proposition}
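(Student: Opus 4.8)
The plan is to exploit that $FN(X)$ sits inside the lattice $F(X)$ with the induced order, so that it suffices to verify two things: first, that the pointwise meet of two finitely valued normal functions is again normal; and second, that the normalization of the pointwise join is the least upper bound in $FN(X)$. For the meet this is all that is needed: if $\inf(f,g)\in FN(X)$, then, being already the greatest lower bound of $f,g$ in the larger poset $F(X)$, it is a fortiori their greatest lower bound in the sub-poset $FN(X)$. For the join the situation is asymmetric, since $\sup(f,g)$ itself is generally not normal, so I will argue directly that $\sup(f,g)^\#$ is an upper bound of $\{f,g\}$ in $FN(X)$ and that it lies below every normal upper bound.

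For the meet, given $f,g\in FN(X)$ I would fix a single increasing list $a_0<\cdots<a_n$ in $A$ containing all values of $f$ and of $g$, so that by Remark~\ref{decreasing} the normality of each is witnessed on this list. Since $\inf(f,g)(x)\ge a_i$ iff $f(x)\ge a_i$ and $g(x)\ge a_i$, we get $(\inf(f,g))^{-1}(\up a_i)=f^{-1}(\up a_i)\cap g^{-1}(\up a_i)$, an intersection of two regular open sets. A finite intersection of regular open sets is regular open (from monotonicity of $\int$ and $\cl$ together with $\cl(U\cap V)\subseteq\cl(U)\cap\cl(V)$ and $\int(S\cap T)=\int S\cap\int T$), so each $(\inf(f,g))^{-1}(\up a_i)$ is regular open, and Remark~\ref{decreasing} gives $\inf(f,g)\in FN(X)$.

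For the join, $\sup(f,g)^\#\in FN(X)$ by Definition~\ref{def:2.7}. Using the same list $a_0<\cdots<a_n$ — which also contains the values of $\sup(f,g)$ and hence, by Remark~\ref{immediate facts}(3), those of $\sup(f,g)^\#$ — I would note that $f\le\sup(f,g)$ gives $f^{-1}(\up a_i)\subseteq\sup(f,g)^{-1}(\up a_i)$, and then apply $\int\circ\cl$: since this operator is monotone and fixes regular open sets, $f^{-1}(\up a_i)=\int(\cl(f^{-1}(\up a_i)))\subseteq\int(\cl(\sup(f,g)^{-1}(\up a_i)))=(\sup(f,g)^\#)^{-1}(\up a_i)$. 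By Lemma~\ref{refine} this yields $f\le\sup(f,g)^\#$, and symmetrically $g\le\sup(f,g)^\#$. Finally, if $h\in FN(X)$ with $f,g\le h$, then $\sup(f,g)\le h$ pointwise, so $\sup(f,g)^{-1}(\up a_i)\subseteq h^{-1}(\up a_i)$; applying $\int\circ\cl$ once more and using that $h^{-1}(\up a_i)$ is regular open gives $(\sup(f,g)^\#)^{-1}(\up a_i)\subseteq h^{-1}(\up a_i)$, whence $\sup(f,g)^\#\le h$ by Lemma~\ref{refine}. Thus $\sup(f,g)^\#$ is the join of $f,g$ in $FN(X)$.

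I do not expect a genuine obstacle here: all the real content is packaged into the earlier lemmas. The only points requiring care are bookkeeping ones — choosing one common value list so that Lemma~\ref{refine} applies uniformly to $f$, $g$, $\sup(f,g)$, $\sup(f,g)^\#$, and $h$ at once — and recording the standard facts that $\int\circ\cl$ is monotone, fixes regular open sets, and that $\mathcal{RO}(X)$ is closed under binary intersection.
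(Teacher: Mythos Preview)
Your proof is correct and follows essentially the same approach as the paper. The only cosmetic difference is that the paper first isolates the monotonicity of normalization ($f\le g$ implies $f^\#\le g^\#$) as a separate claim and then applies it twice, whereas you perform the underlying $\int\circ\cl$ computation inline each time; the content is identical.
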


\begin{proof}
First we claim that the normalization operation is order preserving; that is, if $f,g \in F(X)$ with $f \le g$, then $f^\# \le g^\#$. Let $a_0 < \cdots < a_n$ be elements of $A$ containing all values of $f$ and $g$. By Lemma~\ref{refine}, $f^{-1}(\up a_i) \subseteq g^{-1}(\up a_i)$ for each $i$. Therefore, ${\sf Int}\left({\sf Cl}\left(f^{-1}(\up a_i)\right)\right)\subseteq{\sf Int}\left({\sf Cl}\left(g^{-1}(\up a_i)\right)\right)$ for each $i$. Thus, applying Lemma~\ref{refine} again yields $f^\# \le g^\#$.

Now we prove the proposition. Let $f,g \in FN(X)$. Then $\inf(f,g)^{-1}(\up a) = f^{-1}(\up a) \cap g^{-1}(\up a)$ is regular open for each $a \in A$. Thus, $\inf(f,g) \in FN(X)$, and so $f\wedge g=\inf(f,g)$. Since $f \le \sup(f,g)$, we see that $f = f^\# \le \sup(f,g)^\#$. Similarly, $g \le \sup(f,g)^\#$. If $k \in FN(X)$ with $f,g \le k$, then $\sup(f,g) \le k$ in $F(X)$, and so $\sup(f,g)^\# \le k^\# = k$. Consequently, $f\vee g=\sup(f,g)^\#$.
\end{proof}

\section{de Vries powers of totally ordered algebras}

In this section we continue to assume $X$ is a compact Hausdorff space, but we assume now that $A$ is a totally ordered algebra of a given type. We introduce the notion of a de Vries power of $A$ by $X$ (as a set it will be $FN(X)$) in such a way that the power is an algebra of the same type as $A$ and comes equipped with a canonical proximity relation. We first indicate how to lift operations from $A$ to $F(X)$; once this is accomplished, we normalize these operations to obtain an algebraic structure on $FN(X)$ having the same type as that of $A$.

We extend the order and operations on $A$ to $F(X)$ pointwise. That is, for $f,g\in F(X)$, we set $f\le g$ iff $f(x)\le g(x)$ for each $x\in X$, and if $\lambda$ is an $m$-ary operation on $A$ and $f_1,\ldots,f_m \in F(X)$, then we set $\lambda(f_1,\dots, f_m)(x) = \lambda(f_1(x),\dots,f_m(x))$. It is clear that $F(X)$ is a partially ordered algebra of the same type as $A$. Furthermore, if $a \in A$, then
\[
\lambda(f_1,\dots,f_m)^{-1}(a) = \bigcup \{ f_1^{-1}(b_1) \cap \cdots \cap f_m^{-1}(b_n) : \lambda(b_1,\dots, b_m) = a\}.
\]
From this and $FC(X)=C(X,A_{\mathrm{disc}})$ (see Notation~\ref{notation}(2)) it follows that $\lambda(f_1,\dots,f_m) \in FC(X)$ for each $f_1,\dots,f_m \in FC(X)$. Thus, $FC(X)$ is a subalgebra of $F(X)$.

\begin{definition}\label{def:3.1}
For each $m$-ary operation $\lambda$ on $A$, define the $m$-ary operation $\lambda^\#$ on $FN(X)$ by $\lambda^\#(f_1,\dots,f_m) = \lambda(f_1,\dots,f_m)^\#$ for all $f_1,\ldots,f_n \in FN(X)$.
\end{definition}

This makes $FN(X)$ a partially ordered algebra of the same type as $A$, and $FC(X)$ is a subalgebra of $FN(X)$. Alternatively, $FN(X)$ can be viewed as a direct limit of the $FC(U)$, where $U$ ranges over the directed set $\mathcal{I}$ of  dense open subsets of $X$, and the operations on $FN(X)$ then are those induced by the pointwise operations on the $FC(U)$. Theorem~\ref{direct limit theorem} makes this explicit. We use the fact that the direct limit of the directed system $\{FC(U):U \in \mathcal{I}\}$ can be described as the set of all pairs $(f,U)$ with $U \in \mathcal{I}$ and $f \in FC(U)$, and where $(f,U) = (g,V)$ whenever there is $W \in \mathcal{I}$ with $W \subseteq U \cap V$ and $f|_{W} = g|_{W}$ (see \cite[Sec.~1]{BN80}). Since each $FC(U)$ is an algebra of the same type as $A$, the direct limit is also an algebra of the same type as $A$.

\begin{theorem} \label{direct limit theorem}
The algebra $FN(X)$ is isomorphic to the direct limit $L$ of the algebras $FC(U)$ as $U$ ranges over all open dense subsets of $X$.
\end{theorem}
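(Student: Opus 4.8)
The plan is to construct an explicit isomorphism $\Phi : L \to FN(X)$ using the extension-uniqueness machinery already developed, namely Proposition~\ref{uniqueext}. First I would define $\Phi$ on representatives: given a class in $L$ represented by a pair $(f,U)$ with $U \in \mathcal{I}$ and $f \in FC(U)$, set $\Phi(f,U) = f^\#$, the normalization of $f$ viewed as a function on $U$ (using the extended notion of normalization for functions on a dense subset, as discussed just before Proposition~\ref{uniqueext}). The first task is to check that $\Phi$ is well defined. Suppose $(f,U) = (g,V)$ in $L$, so there is $W \in \mathcal{I}$ with $W \subseteq U \cap V$ and $f|_W = g|_W$. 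Then $f^\#$ restricts to $f$ on $U$, hence to $f|_W$ on $W$; likewise $g^\#$ restricts to $g|_W$ on $W$; since $f|_W = g|_W$ and both $f^\#$ and $g^\#$ lie in $FN(X)$ and restrict to the same continuous function on the dense open set $W$, the uniqueness clause of Proposition~\ref{uniqueext} forces $f^\# = g^\#$. So $\Phi$ is well defined.

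Next I would verify that $\Phi$ is a homomorphism of algebras of the type of $A$. Fix an $m$-ary operation $\lambda$ and classes $[(f_1,U_1)],\dots,[(f_m,U_m)]$ in $L$. Set $U = U_1 \cap \cdots \cap U_m$, which is again open dense, so each class is represented by the restriction of $f_j$ to $U$, and the operation in $L$ is computed as $[(\lambda(f_1|_U,\dots,f_m|_U),U)]$, the pointwise application on $U$. Thus I must show $\lambda(f_1|_U,\dots,f_m|_U)^\# = \lambda^\#(f_1^\#,\dots,f_m^\#) = \lambda(f_1^\#,\dots,f_m^\#)^\#$. The key point is that $\lambda$ applied pointwise to the $f_j^\#$ agrees with $\lambda$ applied pointwise to the $f_j$ on the open dense set $U$ (indeed $f_j^\#|_U = f_j|_U$ by Proposition~\ref{uniqueext}), and both sides of the desired equality lie in $FN(X)$; so again by the uniqueness in Proposition~\ref{uniqueext}, normalizing two functions that agree on a dense open set gives the same result. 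The same argument handles the order: $f \le g$ on $U$ implies $f^\# \le g^\#$ by the order-preservation of normalization (Proposition~\ref{prop:2.10}'s proof), and conversely restricting back to $U$ recovers the pointwise comparison, so $\Phi$ is an order isomorphism onto its image as well.

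It remains to check that $\Phi$ is a bijection. For surjectivity, let $h \in FN(X)$. By Proposition~\ref{continuousondense}, $h$ is continuous on some open dense $U \subseteq X$, so $h|_U \in FC(U)$ and $[(h|_U, U)] \in L$; then $\Phi(h|_U,U) = (h|_U)^\#$, and since $h \in FN(X)$ restricts to $h|_U$ on the dense open set $U$, uniqueness in Proposition~\ref{uniqueext} gives $(h|_U)^\# = h$. For injectivity, suppose $\Phi(f,U) = \Phi(g,V)$, i.e.\ $f^\# = g^\#$ as elements of $FN(X)$. Restricting to $W = U \cap V$ (open dense), we get $f^\#|_W = g^\#|_W$, but $f^\#|_U = f$ and $g^\#|_V = g$ by Proposition~\ref{uniqueext}, so $f|_W = f^\#|_W = g^\#|_W = g|_W$, whence $(f,U) = (g,V)$ in $L$. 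Therefore $\Phi$ is an isomorphism of algebras of the type of $A$, as claimed.

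I do not anticipate a genuine obstacle here; essentially every step is an invocation of Proposition~\ref{uniqueext} (existence and uniqueness of the normal extension of a continuous function from a dense open subset) together with the order-preservation of normalization. The one point requiring a little care is the compatibility of normalization with the operations $\lambda$: one must resist the temptation to argue pointwise everywhere and instead note that $\lambda(f_1^\#,\dots,f_m^\#)$ need not equal $\lambda(f_1,\dots,f_m)$ off the dense set $U$, so the identification of the two normalizations genuinely uses uniqueness of the normal extension rather than a pointwise identity. Once that is observed, the proof is routine.
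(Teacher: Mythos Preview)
Your proposal is correct and follows essentially the same route as the paper: define the map $L \to FN(X)$ by $(f,U) \mapsto f^\#$, verify well-definedness via the uniqueness clause of Proposition~\ref{uniqueext}, check the homomorphism property by observing that $\lambda(f_1,\dots,f_m)^\#$ and $\lambda(f_1^\#,\dots,f_m^\#)^\#$ are both normal extensions of the same continuous function on $U$, and establish bijectivity using Propositions~\ref{continuousondense} and~\ref{uniqueext}. Your injectivity argument is in fact marginally cleaner than the paper's, since you restrict directly to $W = U \cap V$ and use $f^\#|_U = f$, whereas the paper takes an unnecessary detour through Proposition~\ref{continuousondense} to find a dense open set on which $f^\#$ is continuous before intersecting with $U \cap V$.
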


\begin{proof}
For each open dense subset $U$ of $X$, we have a map $FC(U) \to FN(X)$, given by $f \mapsto f^\#$. These then induce a map $\alpha : L\to FN(X)$, given by $\alpha(f,U) = f^\#$. This map is well defined because if $(f,U) = (g,V)$, then $f^\#$ and $g^\#$ are normal functions extending $f$ (and $g$) on a dense open set $W \subseteq U \cap V$. Thus, by Proposition~\ref{uniqueext}, $f^\# = g^\#$. To see that $\alpha$ is a homomorphism, let $\lambda$ be an $m$-ary operation on $A$ and let $g_1,\dots,g_m \in L$. We may find a single open dense set $U$ for which $g_i = (f_i,U)$ for some $f_i \in FC(U)$. Then
\[
\alpha\left(\lambda(g_1,\dots,g_m)\right) = \alpha((\lambda(f_1,\dots, f_m), U)) = \lambda(f_1, \dots, f_m)^\#.
\]
On the other hand,
\[
\lambda^\#(\alpha(g_1), \dots, \alpha(g_m)) = \lambda(f^\#_1, \dots, f^\#_m)^\#.
\]
Both functions $\lambda(f_1,\dots,f_m)^\#$ and $\lambda(f^\#_1,\dots,f^\#_m)^\#$ are normal functions on $X$ and restrict to $\lambda(f_1,\dots,f_m)$ on $U$. Thus, by Proposition~\ref{uniqueext}, they are equal. This proves that $\alpha$ is a homomorphism. It is 1-1 because if $(f,U), (g,V)$ are in $L$ and $f^\# = g^\#$, then by Proposition~\ref{continuousondense}, there is an open dense set $W$ with $f^\#$ continuous on $W$. By replacing $W$ with $W \cap U \cap V$, we may assume $W\subseteq U\cap V$. By Proposition~\ref{restriction}, $f|_W = f^\#|_W = g^\#|_W = g|_W$. Finally, $\alpha$ is onto because if $h \in FN(X)$, then by Proposition~\ref{continuousondense}, $h$ is continuous on an open dense set $U$, so $h = \alpha(h|_U, U)$ by Proposition~\ref{uniqueext}. Consequently, $L$ and $FN(X)$ are isomorphic as algebras.
\end{proof}

We have noted that $FN(X)$ is an algebra of the same type as $A$. The de Vries power of $A$ by $X$ is then the algebra $FN(X)$ equipped with a canonically chosen proximity relation; that the relation indeed behaves like a proximity is proved in Theorem~\ref{prox relation}.

\begin{definition}
\begin{enumerate}
\item[]
\item The {\it de Vries power} of the totally ordered algebra $A$ by $X$ is the algebra  $FN(X)$ with the relation $\prec_X$ defined by
$$
f \prec_X g \mbox{ if } {\sf Cl}\left(f^{-1}(\up a)\right) \subseteq g^{-1}(\up a) \mbox{ for each } a \in A.
$$
In other words, $f \prec_X g$ provided $f^{-1}(\up a)\prec g^{-1}(\up a)$ in the de Vries algebra of regular open subsets of $X$.
\item If $(B,\prec)$ is the de Vries algebra whose de Vries dual is $X$, then we call $(FN(X),\prec_X)$ the {\it de Vries power of $A$ by $(B,\prec)$}.
\end{enumerate}
\end{definition}

As the next theorem shows, $\prec_X$ satisfies  typical axioms for a proximity relation.

\begin{theorem}\label{prox relation}
The relation $\prec_X$ on $FN(X)$ has the following properties.
\begin{enumerate}
\item $f \prec_X g$ implies $f \le g$.
\item $f \le g \prec_X h \le k$ implies $f \prec_X k$.
\item $f \prec_X g,h$ implies $f \prec_X g \wedge h$.
\item $f,g \prec_X h$ implies $f\vee g \prec_X h$.
\item $f \prec_X g$ implies there is $h\in FN(X)$ with $f \prec_X h \prec_X g$.
\item $f \prec_X f$ iff $f \in FC(X)$.
\end{enumerate}
\end{theorem}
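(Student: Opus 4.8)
The plan is to reduce each clause to a statement about regular open subsets of $X$ and then apply either elementary point-set topology or the de Vries axioms (DV1)--(DV7) for the de Vries algebra $(\mathcal{RO}(X),\prec)$. The facts I will use repeatedly are: for $f,g\in FN(X)$, one has $f\le g$ iff $f^{-1}(\up a)\subseteq g^{-1}(\up a)$ for every $a\in A$ (an immediate extension of Lemma~\ref{refine}); the preimage identities $\inf(f,g)^{-1}(\up a)=f^{-1}(\up a)\cap g^{-1}(\up a)$ and $\sup(f,g)^{-1}(\up a)=f^{-1}(\up a)\cup g^{-1}(\up a)$; and the description of $\wedge,\vee$ on $FN(X)$ from Proposition~\ref{prop:2.10}.

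Clauses (1)--(3) I expect to be one-liners. For (1): ${\sf Cl}(f^{-1}(\up a))\subseteq g^{-1}(\up a)$ gives $f^{-1}(\up a)\subseteq g^{-1}(\up a)$ for all $a$, hence $f\le g$. For (2): chaining inclusions, ${\sf Cl}(f^{-1}(\up a))\subseteq{\sf Cl}(g^{-1}(\up a))\subseteq h^{-1}(\up a)\subseteq k^{-1}(\up a)$. For (3): since $g\wedge h=\inf(g,h)$, the set $(g\wedge h)^{-1}(\up a)=g^{-1}(\up a)\cap h^{-1}(\up a)$ contains ${\sf Cl}(f^{-1}(\up a))$ as soon as $g^{-1}(\up a)$ and $h^{-1}(\up a)$ both do. For (4), the one extra step is that $f\vee g=\sup(f,g)^\#$ gives $(f\vee g)^{-1}(\up a)={\sf Int}({\sf Cl}(f^{-1}(\up a)\cup g^{-1}(\up a)))$, so by the elementary inclusion ${\sf Cl}({\sf Int}({\sf Cl}(E)))\subseteq{\sf Cl}(E)$ we get
\[
{\sf Cl}((f\vee g)^{-1}(\up a))\subseteq{\sf Cl}(f^{-1}(\up a))\cup{\sf Cl}(g^{-1}(\up a))\subseteq h^{-1}(\up a),
\]
the last inclusion from $f,g\prec_X h$; hence $f\vee g\prec_X h$.

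The heart of the matter is clause (5), the interpolation axiom, because interpolating the regular open sets $f^{-1}(\up a_i)\prec g^{-1}(\up a_i)$ separately for each value $a_i$ need not produce a \emph{decreasing} family, and so need not assemble into a function. I would fix $a_0<\cdots<a_n$ containing all values of $f$ and $g$, with $a_0$ their common minimum (so $f^{-1}(\up a_0)=g^{-1}(\up a_0)=X$), abbreviate $u_i=f^{-1}(\up a_i)$ and $v_i=g^{-1}(\up a_i)$ (so $u_i\prec v_i$ in $\mathcal{RO}(X)$ and $X=u_0\supseteq u_1\supseteq\cdots$, similarly for $v$), and build regular open $w_n\supseteq w_{n-1}\supseteq\cdots\supseteq w_1$ by downward recursion: choose $w_n$ with $u_n\prec w_n\prec v_n$ by (DV6); having $w_{i+1}$, observe $w_{i+1}\prec v_{i+1}\subseteq v_i$ and $u_i\prec v_i$, so $u_i\vee w_{i+1}\prec v_i$ in $\mathcal{RO}(X)$ (the standard consequence of (DV4) and (DV5) that $x,y\prec z$ imply $x\vee y\prec z$), and then pick $w_i$ with $u_i\vee w_{i+1}\prec w_i\prec v_i$ by (DV6), so that $w_{i+1}\subseteq w_i$ and $u_i\prec w_i\prec v_i$ by (DV3). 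Put $w_0=X$ and let $h$ be the function with $h^{-1}(\up a_i)=w_i$ given by Lemma~\ref{easy first}(2); it lies in $FN(X)$ by Remark~\ref{decreasing}. Since $f$, $g$, $h$ all take values in $\{a_0,\dots,a_n\}$, for each $a\in A$ the sets $f^{-1}(\up a)$, $g^{-1}(\up a)$, $h^{-1}(\up a)$ equal the corresponding $f^{-1}(\up a_j)$, $g^{-1}(\up a_j)$, $w_j$ for a single index $j$ (or are all $X$, or all $\varnothing$), so the relations $u_i\prec w_i\prec v_i$ extend to $f\prec_X h\prec_X g$.

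Finally, for (6): if $f\in FC(X)=C(X,A_{\mathrm{disc}})$ then each $f^{-1}(\up a)$ is clopen, hence closed, hence equal to its closure, giving $f\prec_X f$; conversely, $f\prec_X f$ makes each $f^{-1}(\up a)$ both open (it is regular open since $f\in FN(X)$) and closed (since ${\sf Cl}(f^{-1}(\up a))\subseteq f^{-1}(\up a)$), so each $f^{-1}(a_i)=f^{-1}(\up a_i)-f^{-1}(\up a_{i+1})$ is clopen over the values $a_i$ of $f$, whence $f$ is continuous into $A_{\mathrm{disc}}$, i.e.\ $f\in FC(X)$. The only genuinely delicate point in the whole proof is the compatibility of the interpolants in (5); everything else is bookkeeping with closures and interiors.
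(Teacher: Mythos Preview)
Your proof is correct and matches the paper's in clauses (1)--(4) and (6). The one genuine difference is in clause (5): you build the decreasing family $w_i$ by a downward recursion using joins, whereas the paper simply interpolates $f^{-1}(\up a_i)\prec U_i\prec g^{-1}(\up a_i)$ independently for each $i$ and then takes $V_i=U_0\cap\cdots\cap U_i$. The $V_i$ are automatically decreasing regular opens; since $f^{-1}(\up a_i)\subseteq f^{-1}(\up a_j)\prec U_j$ for every $j\le i$ (the $f^{-1}(\up a_j)$ are decreasing in $j$) one gets $f^{-1}(\up a_i)\prec V_i$ by (DV4), and $V_i\subseteq U_i\prec g^{-1}(\up a_i)$ gives the other relation by (DV3). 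Your recursion is sound but does more work than necessary; the cumulative-intersection trick is the simpler fix for exactly the compatibility issue you flagged.
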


\begin{proof}
(1) Let $f \prec_X g$. If $a \in A$, then $f^{-1}(\up a) \prec g^{-1}(\up a)$, so $f^{-1}(\up a) \subseteq g^{-1}(\up a)$. Thus, by Lemma~\ref{refine}, $f \le g$.

(2) Let $f \leq g \prec_X h \leq k$ and let $a \in A$. Then $f^{-1}(\up a) \subseteq g^{-1}(\up a) \prec h^{-1}(\up a) \subseteq k^{-1}(\up a)$, so $f^{-1}(\up a) \prec k^{-1}(\up a)$. Thus, $f \prec_X k$.

(3) Let $f \prec_X g, h$, and let $a \in A$. Then $f^{-1}(\up a) \prec g^{-1}(\up a), h^{-1}(\up a)$, so $f^{-1}(\up a) \prec g^{-1}(\up a) \cap h^{-1}(\up a)$. By Proposition~\ref{prop:2.10}, $g^{-1}(\up a) \cap h^{-1}(\up a) = (g\wedge h)^{-1}(\up a)$. Therefore, $f^{-1}(\up a) \prec (g\wedge h)^{-1}(\up a)$. Thus, $f \prec_X g \wedge h$.

(4) Let $f,g \prec_X h$, and let $a \in A$. Then $f^{-1}(\up a), g^{-1}(\up a) \prec h^{-1}(\up a)$, so $f^{-1}(\up a) \vee g^{-1}(\up a) \prec h^{-1}(\up a).$ Since
\[
{\sf Int}\left({\sf Cl}\left(\sup(f,g)^{-1}(\up a)\right)\right) = {\sf Int}\left({\sf Cl}\left(f^{-1}(\up a) \cup g^{-1}(\up a)\right)\right)  = f^{-1}(\up a) \vee g^{-1}(\up a),
\]
we have ${\sf Int}\left({\sf Cl}\left(\sup(f,g)^{-1}(\up a)\right)\right) \prec h^{-1}(\up a)$. By Proposition~\ref{prop:2.10}, $f\vee g=\sup(f,g)^\#$. Thus, by Definition~\ref{def:2.7}, $(f\vee g)^{-1}(\up a)={\sf Int}\left({\sf Cl}\left(\sup(f,g)^{-1}(\up a)\right)\right)$, so $(f\vee g)^{-1}(\up a) \prec h^{-1}(\up a)$, which implies that $f\vee g \prec_X h$.

(5) Let $f \prec_X g$ and let the values of $f$ and $g$ be among $a_0 < \cdots < a_n$. We have $f^{-1}(\up a_i) \prec g^{-1}(\up a_i)$ for each $i$, so there is a regular open set $U_i$ with $f^{-1}(\up a_i) \prec U_i \prec g^{-1}(\up a_i)$. Set $V_i = U_0 \cap \cdots \cap U_i$ for $0 \le i \le n$ and $V_{n+1} = \varnothing$. Then the $V_i$ are decreasing regular open sets and $f^{-1}(\up a_i) \prec V_i \prec g^{-1}(\up a_i)$ for each $i$. If we define $h$ by $h(x) = a_i$ provided $x \in V_i - V_{i+1}$, then $h\in FN(X)$ and $f \prec_X h \prec_X g$.

(6) Let $a_0 < \cdots < a_n$ be the values of $f$. Then $f \prec_X f$ iff ${\sf Cl}\left(f^{-1}(\up a_i)\right)\subseteq f^{-1}(\up a_i)$ for each $i$, which happens iff $f^{-1}(\up a_i)$ is clopen for each $i$. This is clearly equivalent to $f\in FC(X)$.
\end{proof}

We next show that the notion of a de Vries power of a totally ordered algebra encompasses that of a Boolean power.

\begin{theorem}
If $X$ is a Stone space, then the Boolean power of a totally ordered algebra $A$ by $X$ is the subalgebra $\{f \in FN(X):f \prec_X f\}$ of $FN(X)$. Thus, every Boolean power of a totally ordered algebra can be expressed as a subalgebra of a de Vries power of the algebra.
\end{theorem}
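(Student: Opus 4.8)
The plan is to identify the indicated subalgebra with $FC(X)$ and then invoke the definitions: for a Stone space $X$, the algebra $FC(X)=C(X,A_{\mathrm{disc}})$ is by definition the Boolean power of $A$ by $X$.

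First I would record the set-theoretic equality $\{f\in FN(X):f\prec_X f\}=FC(X)$. By Remark~\ref{immediate facts}(4), every continuous finitely valued function is normal, so $FC(X)\subseteq FN(X)$; and by Theorem~\ref{prox relation}(6), an $f\in FN(X)$ satisfies $f\prec_X f$ if and only if $f\in FC(X)$. Hence the two sets coincide. (Note this much holds for any compact Hausdorff $X$; the Stone hypothesis enters only to make the term ``Boolean power'' meaningful.)

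Next I would check that $FC(X)$, with the algebra structure induced from $FN(X)$, is a subalgebra whose operations are the \emph{pointwise} ones. Let $\lambda$ be an $m$-ary operation of $A$ and $f_1,\dots,f_m\in FC(X)$. As observed in Section~3, the pointwise value $\lambda(f_1,\dots,f_m)$ again lies in $FC(X)$, hence is normal; therefore by Remark~\ref{immediate facts}(1) we get $\lambda^\#(f_1,\dots,f_m)=\lambda(f_1,\dots,f_m)^\#=\lambda(f_1,\dots,f_m)$. Thus $FC(X)$ is closed under each operation $\lambda^\#$ of $FN(X)$, so it is a subalgebra, and $\lambda^\#$ restricted to $FC(X)$ is exactly the pointwise operation $\lambda$. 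In other words, the subalgebra $\{f\in FN(X):f\prec_X f\}$ of $FN(X)$ is $FC(X)$ equipped with its pointwise algebra structure. Finally, since $X$ is a Stone space, say dual to the Boolean algebra $B$, the Boolean power of $A$ by $X$ (equivalently, by $B$) is by definition $C(X,A_{\mathrm{disc}})$ with pointwise operations, and $C(X,A_{\mathrm{disc}})=FC(X)$ by Notation~\ref{notation}(2); this yields the first assertion. The second assertion is then immediate, since an arbitrary Boolean power of $A$ is the Boolean power of $A$ by the Stone space dual to the relevant Boolean algebra, which we have just realized as a subalgebra of the de Vries power of $A$ by that space.

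I do not expect a genuine obstacle here: every ingredient has already been established. The one point that needs a moment's care is verifying that the algebra structure $FN(X)$ induces on $FC(X)$ is the pointwise one and not some normalized variant — and that is precisely where one uses that continuous functions are fixed by normalization.
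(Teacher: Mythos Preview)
Your proof is correct and follows essentially the same route as the paper: identify $\{f\in FN(X):f\prec_X f\}$ with $FC(X)$ via Theorem~\ref{prox relation}(6), note that $FC(X)$ is a subalgebra of $FN(X)$ (the paper cites the remark after Definition~\ref{def:3.1}, while you spell out the normalization argument), and use $FC(X)=C(X,A_{\mathrm{disc}})$. Your extra care in checking that the induced operations on $FC(X)$ are the pointwise ones is a welcome clarification but not a departure from the paper's argument.
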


\begin{proof}
As follows from \cite[Prop.~5.4]{BMMO13a}, the Boolean power $C(X,A_\mathrm{disc})$ is equal to $FC(X)$. By Theorem~\ref{prox relation}(6), $FC(X)=\{f \in FN(X):f \prec_X f\}$. As we noted after Definition~\ref{def:3.1}, $FC(X)$ is a subalgebra of $FN(X)$. Therefore, the Boolean power $C(X,A_\mathrm{disc})$ is the subalgebra $\{f \in FN(X):f \prec_X f\}$ of $FN(X)$.
\end{proof}

\begin{corollary} \label{ED}
The Boolean power of a totally ordered algebra $A$ by a Stone space $X$ coincides with the de Vries power of $A$ by $X$ iff $X$ is extremally disconnected.
\end{corollary}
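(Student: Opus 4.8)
The plan is to read ``coincides'' as the statement $FN(X) = FC(X)$: indeed, by the previous theorem together with Theorem~\ref{prox relation}(6), the Boolean power $FC(X)$ is always the subalgebra $\{f \in FN(X) : f \prec_X f\}$ of the de Vries power $FN(X)$, so the two agree exactly when this subalgebra is all of $FN(X)$. The only topological input needed is the well-known equivalence: a compact Hausdorff space $X$ is extremally disconnected iff every regular open subset of $X$ is clopen, i.e.\ $\mathcal{RO}(X) = \mathrm{Clop}(X)$. (As is implicit in speaking of powers, I assume $A$ has at least two elements.)

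For the implication ``$X$ extremally disconnected implies coincidence'', I would take an arbitrary $f \in FN(X)$ with values $a_0 < \cdots < a_n$ and set $U_i = f^{-1}(\up a_i)$. By Remark~\ref{decreasing} each $U_i$ is regular open, hence clopen by hypothesis. For any $a \in A$ the set $f^{-1}(\up a)$ coincides with one of $U_0, \ldots, U_n, \varnothing$, so it is clopen, in particular closed; therefore $f \prec_X f$, and Theorem~\ref{prox relation}(6) gives $f \in FC(X)$. Hence $FN(X) = FC(X)$, so the de Vries power of $A$ by $X$ coincides with the Boolean power of $A$ by $X$.

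For the converse I would prove that every regular open $U \subseteq X$ is clopen, which forces $X$ to be extremally disconnected. Fix $a_0 < a_1$ in $A$ and define $f : X \to A$ by $f(x) = a_1$ for $x \in U$ and $f(x) = a_0$ otherwise. A short case check shows $f^{-1}(\up a) \in \{X, U, \varnothing\}$ for every $a \in A$, so $f$ is normal and $f \in FN(X) = FC(X)$; thus $f$ is continuous for the discrete topology on $A$, whence $U = f^{-1}(a_1)$ is clopen.

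I do not anticipate a genuine obstacle; the points that need attention are the interpretation of ``coincides'' (already prepared by the preceding theorem), the nondegeneracy assumption on $A$, and checking in the converse that the two-valued function $f$ is normal for \emph{all} $a \in A$ rather than just for its two values --- which is immediate once one notes $f^{-1}(\up a)$ is always $X$, $U$, or $\varnothing$.
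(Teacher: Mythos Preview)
Your proof is correct and follows essentially the same approach as the paper: both reduce the question to the equivalence ``$X$ is extremally disconnected iff $\mathcal{RO}(X)$ coincides with the clopens of $X$'' and identify this with $FN(X)=FC(X)$. The paper simply asserts that the latter equivalence is clear, whereas you have spelled out the two-valued construction for the converse direction; the arguments are otherwise the same.
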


\begin{proof}
It is well known that $X$ is extremally disconnected iff regular opens of $X$ coincide with clopens of $X$. This is clearly equivalent to $FC(X)=FN(X)$. The result follows.
\end{proof}

Of course, the proximity axioms in Theorem~\ref{prox relation} ignore the algebraic structure of $FN(X)$ induced by that of $A$. Such axioms will depend on the behavior of the operations on $A$ with respect to the total order of $A$, and the interplay between the operations and the proximity can be subtle. In what follows, we axiomatize de Vries powers of a totally ordered integral domain, thus generalizing the axiomatization of Boolean powers of a totally ordered domain given in \cite{BMMO13a}. This includes the axiomatization of de Vries powers of such classic algebras as $\mathbb Z$, $\mathbb Q$, and $\mathbb R$, thus generalizing the results of \cite{Rib69, Con74, BMO13a}. It would be of interest to axiomatize de Vries powers of other algebras as well.

\section{de Vries powers of totally ordered domains}

In this section $X$ continues to denote a compact Hausdorff space, but we assume now that $A$ is a totally ordered integral domain. Theorem~\ref{prox relation} indicates how the relation $\prec_X$ on $FN(X)$ behaves with respect to the lattice structure of $FN(X)$. In this section we describe the algebraic structure of $FN(X)$ and show how the ring operations on $FN(X)$ induced by those of the totally ordered domain $A$ interact with the relation $\prec_X$.

Recall that a ring with a partial order $\le$ is an \emph{$\ell$-ring} (lattice-ordered ring) provided (i)~it is a lattice, (ii)~$a \le b$ implies $a + c \le b + c$ for each $c$, and (iii)~$0 \le a, b$ implies $0 \le ab$. An $\ell$-ring is \emph{totally ordered} if the order is a total order, and it is an \emph{$f$-ring} if it is a subdirect product of totally ordered rings. It is well known (see, e.g., \cite[Ch.~XVII, Corollary to Thm.~8]{Bir79}) that an $\ell$-ring is an $f$-ring iff $a\wedge b=0$ and $c\ge 0$ imply $ac\wedge b=0$.

We say a ring $S$ is an {\it $\ell$-algebra} if it is an $\ell$-ring, an $A$-algebra (with $A$ as above), and $a \in A$, $s \in S$ with $0 \le a,s$ imply that $0 \le as$. An $\ell$-algebra $S$ is an {\it $f$-algebra} provided the ring $S$ is an $f$-ring. If $S=\{0\}$, then we call $S$ a {\em trivial} $f$-algebra. If $S$ is a nontrivial torsion-free $f$-algebra, then $a\mapsto a\cdot 1$ embeds $A$ into $S$, and without loss of generality, we view $A$ as a subalgebra of $S$.

\begin{notation}
For a torsion-free $f$-algebra $S$ over $A$, we denote the image $a\cdot 1$ of $a\in A$ in $S$ by $a$. When $S$ is nontrivial, then since $S$ is torsion-free, we may in fact identify $a$ with its image in $S$. However, when $S$ is trivial, then for each $a\in A$, we have $a=0$ in $S$ under our convention. Since we will mostly be dealing with nontrivial algebras, this will cause little confusion.
\end{notation}

\begin{definition}\label{proximity definition}
Let $S$ be a torsion-free $f$-algebra over $A$. A binary relation $\prec$ on $S$ is a {\it proximity} if the following axioms are satisfied:
\begin{enumerate}
\item[(P1)] $0\prec 0$ and $1 \prec 1$.
\item[(P2)] $s \prec t$ implies $s \le t$.
\item[(P3)] $s \le t \prec r \le u$ implies $s \prec u$.
\item[(P4)] $s \prec t,r$ implies $s \prec t \wedge r$.
\item[(P5)] $s \prec t$ implies $-t \prec -s$.
\item[(P6)] $s \prec t$ and $r \prec u$ imply $s+r \prec t+u$.
\item[(P7)] $s \prec t$ implies $as \prec at$ for each $0 < a \in A$, and $as \prec at$ for some $0 < a \in A$ implies $s \prec t$.
\item[(P8)] $s,t,r,u \ge 0$ with $s \prec t$ and $r \prec u$ imply $sr \prec tu$.
\item[(P9)] $s \prec t$ implies there is $r\in S$ with $s \prec r \prec t$.
\item[(P10)] $s > 0$ implies there is $0 < t\in S$ with $t \prec s$.
\end{enumerate}
A pair $(S,\prec)$ is a {\it proximity $A$-algebra} if $S$ is a torsion-free $f$-algebra over $A$ and $\prec$ is a proximity on $S$. If $S$ is a Specker $A$-algebra, then we call $(S,\prec)$ a {\it proximity Specker $A$-algebra}.
\end{definition}

\begin{remark}\label{consequences of proximity definition}
\begin{enumerate}
\item[]
\item It is an easy consequence of the axioms that $s\prec t$ and $r\prec u$ imply $s\wedge r\prec t\wedge u$ and $s\vee r\prec t\vee u$. Also, it follows from (P1), (P7), and (P5) that for each $a \in A$, we have $a \prec a$.
\item In ``good" cases, one implication of axiom (P7), that $as \prec at$ for some $0 < a \in A$ implies $s\prec t$, is superfluous. For example, if $A$ is a field and $as\prec at$ for some $0<a\in A$, then by the other implication of axiom (P7), we obtain $a^{-1}as\prec a^{-1}at$, so $s\prec t$. It is also superfluous in some other cases, but we leave the details out because in what follows we will use axiom (P7) in its full strength.
\item By \cite[Thm.~5.1]{BMMO13a}, each Specker $A$-algebra has a unique partial order making it into an $f$-algebra. Since $A$ is a domain, it is a torsion-free $f$-algebra, so proximity Specker $A$-algebras are well defined.
\end{enumerate}
\end{remark}

We show in Theorem~\ref{FN algebra} that not only is $FN(X)$ a proximity $A$-algebra, but it has the particularly transparent algebraic structure of a Baer  Specker $A$-algebra, a notion we recall now.

\begin{definition}
A \emph{Baer ring} $S$ is a commutative ring such that the annihilator of each subset of $S$ is generated as an ideal by an idempotent of the ring (see,~e.g., \cite[p.~260]{Lam99}). A Specker algebra $S$ over the domain $A$ is a \emph{Baer Specker $A$-algebra} provided $S$ is a Baer ring, and a proximity Specker $A$-algebra $(S,\prec)$ is a \emph{proximity Baer Specker $A$-algebra} provided $S$ is a Baer Specker $A$-algebra.
\end{definition}

A Specker $A$-algebra is a Baer Specker $A$-algebra iff the Boolean algebra $\Id(S)$ of idempotents of $S$ is a complete Boolean algebra \cite[Thm.~4.3]{BMMO13a}. To prove that $FN(X)$ is a proximity Baer Specker $A$-algebra, we rely on the following lemma, which can be viewed as a description of the operations on $FN(X)$ that are lifted from those of the domain $A$.

We note that the operations on $A$ are lifted to $F(X)$ pointwise, while the operations on $FN(X)$ are normalizations of the operations on $F(X)$. In particular, $FN(X)$ is not a subalgebra of $F(X)$. For $f,g\in FN(X)$, we denote the sum, product, and join of $f$ and $g$ in $FN(X)$ by $f+g$, $fg$, and $f\vee g$, respectively. So $f+g$ is the normalization of the pointwise sum, $fg$ is the normalization of the pointwise product, and $f\vee g$ is the normalization of the pointwise join of $f$ and $g$. On the other hand, as was shown in Proposition~\ref{prop:2.10}, $f\wedge g$ is the pointwise meet of $f$ and $g$.

\begin{lemma}\label{AlgebraicPropertiesOfFN}
Let $f,g \in FN(X)$ and let $a \in A$.
\begin{enumerate}
\item[(1)] $(f+g)^{-1}(\up a) = \bigvee\{f^{-1}(\up b) \cap g^{-1}(\up c):b+c \ge a\}$.
\item[(2)] If $f,g \ge 0$, then $(fg)^{-1}(\up a) = \bigvee\{f^{-1}(\up b) \cap g^{-1}(\up c):b,c\ge 0, bc\ge a\}$.
\item[(3)] If $0 < c \in A$, then $(cf)^{-1}(\up a) = f^{-1}(\up b)$, where $b$ is the smallest value of $f$ for which $bc \ge a$. Furthermore, $cf$ is the pointwise scalar product of $c$ and $f$.
\item[(4)] If $0 < c \in A$, then $(cf)^{-1}(\up(ca)) = f^{-1}(\up a)$.
\item[(5)] $(-f)^{-1}(\up a) = \lnot f^{-1}(\up b)$, where $b$ is the smallest value of $f$ larger than $-a$ (provided it exists, otherwise $b$ is any element of $A$ larger than $-a$).
\end{enumerate}
\end{lemma}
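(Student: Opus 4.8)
The plan is to prove each of (1)--(5) by the same strategy: compute the preimage $h^{-1}(\up a)$ for the relevant pointwise operation using the union formula recorded in Section~3, namely $\lambda(f_1,\dots,f_m)^{-1}(a) = \bigcup\{f_1^{-1}(b_1)\cap\cdots\cap f_m^{-1}(b_m):\lambda(b_1,\dots,b_m)=a\}$, then pass to the normalization via Definition~\ref{def:2.7} and Remark~\ref{immediate facts}(2), which tells us that taking $\int\cl(-)$ of the pointwise preimages yields the preimages of the normalized function. Since $f,g$ take only finitely many values, all the unions involved are finite, so $\int\cl$ distributes over them as the Boolean join $\bigvee$ in $\mathcal{RO}(X)$; this is exactly how one gets the $\bigvee$ in the statements of (1) and (2). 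Concretely, for (1): the pointwise sum $s=f+g$ (in $F(X)$) satisfies $s^{-1}(\up a)=\bigcup\{f^{-1}(\up b)\cap g^{-1}(\up c):b+c\ge a\}$, because $s(x)\ge a$ iff $f(x)+g(x)\ge a$ iff there exist values $b\le f(x)$, $c\le g(x)$ of $f,g$ with $b+c\ge a$ (using that $f^{-1}(\up b)$ is the set where $f\ge b$); then $(f+g)_{FN}^{-1}(\up a)=\int\cl(s^{-1}(\up a))=\bigvee\{f^{-1}(\up b)\cap g^{-1}(\up c):b+c\ge a\}$, since each $f^{-1}(\up b)$ and $g^{-1}(\up c)$ is already regular open (as $f,g\in FN(X)$), finite intersections of regular opens are regular open, and $\int\cl$ of a finite union of regular opens is their join. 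Part (2) is identical, replacing $+$ by $\cdot$ and using that for $f,g\ge 0$ the relevant values $b,c$ can be taken $\ge 0$ and multiplication is monotone on nonnegatives — here one must be slightly careful that $fg(x)\ge a$ is witnessed by values $b,c\ge 0$ with $bc\ge a$, which holds because $f(x),g(x)\ge 0$ and $A$ is totally ordered.

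For (3), fix $0<c\in A$. Since scalar multiplication by a fixed $c$ is a unary operation, the pointwise $cf$ already lies in $FN(X)$: indeed $(cf)^{-1}(\up a)=\{x:cf(x)\ge a\}=\{x:f(x)\ge b\}=f^{-1}(\up b)$ where $b$ is the smallest value of $f$ with $bc\ge a$ (such a $b$ exists among the finitely many values of $f$ precisely when the set is nonempty; when empty, $(cf)^{-1}(\up a)=\varnothing$, handled by the convention, or one absorbs it into the statement). Using $cf(x)\ge a\iff f(x)\ge b$ requires that $c>0$ so that $t\mapsto ct$ is strictly order-preserving on $A$; this is where the hypothesis $0<c$ is used, and it also shows the value-set of $cf$ is $\{c\cdot(\text{value of }f)\}$, so each preimage $f^{-1}(\up b)$ is regular open and hence $cf\in FN(X)$ with $(cf)^\#=cf$, giving "$cf$ is the pointwise scalar product." Part (4) is the special case $a\rightsquigarrow ca$ of (3): the smallest value $b$ of $f$ with $bc\ge ca$ is, since $c>0$, the smallest value with $b\ge a$; but we need that value to be exactly $a$ — this is automatic if $a$ is itself a value of $f$, and in general $(cf)^{-1}(\up(ca))=f^{-1}(\up a)$ still holds because $\up a$ intersected with the value set of $f$ has the same smallest element as $\{b \text{ a value of }f: b\ge a\}$; more directly, $cf(x)\ge ca\iff f(x)\ge a$ since $c>0$, so $(cf)^{-1}(\up(ca))=f^{-1}(\up a)$ on the nose, no normalization needed.

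For (5): the pointwise $-f$ has value set $\{-a_n<\cdots<-a_0\}$ and $(-f)^{-1}(\up a)=\{x:-f(x)\ge a\}=\{x:f(x)\le -a\}=X\setminus f^{-1}(\up b)$ where $b$ is the smallest value of $f$ strictly greater than $-a$ (so that $f(x)\le -a\iff f(x)<b\iff f(x)\notin\up b$); when no value of $f$ exceeds $-a$ the set is all of $X$ and any $b>-a$ works. Since $f^{-1}(\up b)$ is regular open, its Boolean complement $\lnot f^{-1}(\up b)=\int(X\setminus f^{-1}(\up b))$ is regular open, and $X\setminus f^{-1}(\up b)$ differs from $\int(X\setminus f^{-1}(\up b))$ only on a nowhere dense boundary, so $\int\cl(X\setminus f^{-1}(\up b))=\lnot f^{-1}(\up b)$; thus $(-f)_{FN}^{-1}(\up a)=\lnot f^{-1}(\up b)$ as claimed, and this also shows $-f$ (in $FN(X)$) need not be the pointwise negation, consistent with the remarks before the lemma.

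I do not expect a serious obstacle here: the content is entirely the bookkeeping of translating "value-wise" conditions into unions of preimages and checking that $\int\cl$ turns those finite unions into Boolean joins in $\mathcal{RO}(X)$. The one point demanding mild care is making the "smallest value $b$ of $f$ with [condition]" phrasing unambiguous in the degenerate cases (no such value, or the condition vacuous), which is why the statement already hedges with parenthetical fallbacks; in the write-up I would dispatch these cases in a line each. A second small point worth stating explicitly is that in (1) and (2) one should take the values $a_0<\cdots<a_n$ of $f$ and $g$ from a common refining list (as in Lemma~\ref{refine}) so that the index set of the $\bigvee$ is visibly finite, but this is cosmetic.
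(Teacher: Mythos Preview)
Your proposal is correct and follows essentially the same route as the paper: for each part you compute the preimage $h^{-1}(\up a)$ for the pointwise operation, then apply $\int\cl$ to obtain the normalized preimage, using that the relevant unions are finite and that $\int\cl$ of a finite union of regular opens is their join in $\mathcal{RO}(X)$. The paper's argument is slightly terser (in (5) it simply observes $\cl(X\setminus U)=X\setminus U$ since $U$ is open, avoiding the boundary remark, and in (4) it cites (3) rather than recomputing), but the content is identical.
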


\begin{proof}
(1) Let $h$ be the pointwise sum of $f$ and $g$. Then $f+g$ is the normalization of $h$, and $f+g$ is determined by the formula
\[
(f+g)^{-1}(\up a) = {\sf Int}\left({\sf Cl}\left(h^{-1}(\up a)\right)\right)
\]
for all $a \in A$. We claim that
\[
(f+g)^{-1}(\up a) = \bigvee\{f^{-1}(\up b) \cap g^{-1}(\up c):b+c \ge a\},
\]
where the join is in $\mathcal{RO}(X)$. To see this, we first point out that
\[
h^{-1}(\up a) = \bigcup\{f^{-1}(\up b) \cap g^{-1}(\up c):b+c \ge a\}.
\]
Therefore,
\begin{eqnarray*}
(f+g)^{-1}(\up a) &=& {\sf Int}\left({\sf Cl}\left(h^{-1}(\up a)\right)\right) \\
&=& {\sf Int}\left({\sf Cl}\left(\bigcup\{f^{-1}(\up b) \cap g^{-1}(\up c):b+c \ge a\}\right)\right) \\
&=& \bigvee\{f^{-1}(\up b) \cap g^{-1}(\up c):b+c \ge a\}.
\end{eqnarray*}

(2) Suppose that $f,g \ge 0$. Let $h$ be the pointwise product of $f$ and $g$. Then $fg$ is the normalization of $h$. Since $f,g\ge 0$,
\[
h^{-1}(\up a) = \bigcup\{f^{-1}(\up b) \cap g^{-1}(\up c):b,c \ge 0, bc \ge a\}.
\]
Therefore,
\begin{eqnarray*}
(fg)^{-1}(\up a) &=& {\sf Int}\left({\sf Cl}\left(\bigcup\{f^{-1}(\up b) \cap g^{-1}(\up c):b,c \ge 0, bc \ge a\}\right)\right) \\
&=& \bigvee\{f^{-1}(\up b) \cap g^{-1}(\up c):b,c \ge 0, bc \ge a\}.
\end{eqnarray*}

(3) Let $h$ be the pointwise scalar product of $c$ and $f$. Then $cf$ is the normalization of $h$. Observe that
\[
h^{-1}(\up a) = \bigcup\{f^{-1}(\up b):cb \ge a\}.
\]
Because $f$ is finitely valued, there is a smallest value $b$ of $f$ for which $cb \ge a$. The formula above then shows that $h^{-1}(\up a) = f^{-1}(\up b)$. This implies that $h^{-1}(\up a)$ is regular open, so $h$ is normal. Thus, $cf = h$ is the pointwise scalar product of $c$ and $f$.

(4) This follows from (3) since $A$ being a totally ordered domain and $0 < c$ imply $ca \le cb$ iff $a \le b$.

(5) Let $h$ be the pointwise negative of $f$ (that is, $h(x) = -f(x)$ for each $x \in X$), and let $-f$ be the normalization of $h$. Since $f$ is finitely valued, we have
\[
h^{-1}(\up a) = \{ x \in X : a \le h(x) \} = \{ x \in X : f(x) \le -a\} = X - f^{-1}(\up b),
\]
where $b$ is the smallest value of $f$ larger than $-a$ (provided it exists, otherwise $b$ can be any element of $A$ larger than $-a$). Therefore,
\[
(-f)^{-1}(\up a) = {\sf Int}\left({\sf Cl}\left(h^{-1}(\up a)\right)\right) = {\sf Int}\left({\sf Cl}\left(X -f^{-1}(\up b)\right)\right) = {\sf Int}\left(X - f^{-1}(\up b)\right) = \lnot f^{-1}(\up b).
\]
\end{proof}

\begin{remark}\label{pointwise}
The operations of addition, multiplication, and join in $FN(X)$ are in general not pointwise. In fact, any one of these operations is pointwise iff $X$ is extremally disconnected. One implication follows from Corollary~\ref{ED}. To see the converse, say for addition, let $U$ be a regular open subset of $X$ that is not clopen. Then the pointwise sum of $\chi_U$ and $\chi_{\lnot U}$ is $\chi_{U \cup \lnot U}$, and since $U \cup \lnot U\ne X$, we see that $\chi_{U \cup \lnot U}\ne 1$. On the other hand, since $U \cup \lnot U$ is dense in $X$, we have $\chi_U + \chi_{\lnot U}= (\chi_{U \cup \lnot U})^\# = \chi_X = 1$ in $FN(X)$.
\end{remark}

\begin{remark}\label{special case}
By Remark~\ref{pointwise}, for $f,g \in FN(X)$, the sum $f+g$ in $FN(X)$ need not be the pointwise sum. In spite of this, if one of $f,g$ is a constant function, then $f+g$ is the pointwise sum. The same is true for join and multiplication by a positive scalar. That scalar multiplication by a positive scalar is pointwise was pointed out in Lemma~\ref{AlgebraicPropertiesOfFN}(3). More generally, these facts are immediate consequences of the following facts about normalization. Let $f \in F(X)$ and $a \in A$. For notational convenience, let $+$ refer to the pointwise sum in $F(X)$. Then
\begin{eqnarray*}
(a+f)^\# &=& a + f^\#, \\
\sup(a,f)^\# &=& a \vee f^\#.
\end{eqnarray*}
In addition, if $0 \le a$ and $\cdot$ refers to pointwise multiplication, then
\begin{eqnarray*}
(a\cdot f)^\# = a\cdot f^\#.
\end{eqnarray*}
The arguments for each of these statements are similar, so we give the proof for the first. For each $b \in A$ we have
\begin{eqnarray*}
[(a+ f)^\#]^{-1}(\up b) &=& {\sf Int}\left({\sf Cl}\left((a+ f)^{-1}(\up b)\right)\right) = {\sf Int}\left({\sf Cl}\left(f^{-1}(\up (b-a))\right)\right) \\
&=& (f^\#)^{-1}({\up}(b-a)) = (a + f^\#)^{-1}(\up b).
\end{eqnarray*}
Thus, $(a+ f)^\# = a + f^\#$.

In contrast, scalar multiplication by a negative scalar is not pointwise. To see this, let $U$ be regular open that is not clopen, and let $f=\chi_U$. Then $(-1)f = (-1)\chi_U = -1 + \chi_{X-U}$, where $-1 + \chi_{X-U}$ is the pointwise sum. Therefore, $(-1)f$ is not normal because $\left((-1)f\right)^{-1}(\up 0)=X-U$ is not regular open.

Finally, we point out that by Remark~\ref{decreasing}, if $a_0 < \cdots < a_n$ and $X = U_0 \supset U_1 \supset \cdots \supset U_n \supset \varnothing$ are regular open, then the pointwise sum $a_0 + \sum_{i=1}^n (a_i - a_{i-1}) \chi_{U_i}$ is normal. Thus, the sum of $a_0$ and the $(a_i - a_{i-1})\chi_{U_i}$ in $FN(X)$ is the same as the sum in $F(X)$.
\end{remark}

It is well known that in a commutative ring $S$, the set $\Id(S)$ of idempotents of $S$ forms a Boolean algebra with respect to the operations $s\wedge t=st, \ s\vee t=s+t-st, \ \lnot s=1-s$.

\begin{lemma}\label{idempotents of FN}
$FN(X)$ is a commutative ring with 1, and $f\in FN(X)$ is an idempotent iff $f=\chi_U$ for some regular open $U$. Moreover, the map $U \mapsto \chi_U$ is a Boolean isomorphism between $\mathcal{RO}(X)$ and $\Id(FN(X))$.
\end{lemma}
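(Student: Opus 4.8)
The plan is to dispatch the three assertions in order, leaning on the direct-limit description of $FN(X)$ from Theorem~\ref{direct limit theorem}. First, to see that $FN(X)$ is a commutative ring with $1$: one should not expect this to follow from the identities of $A$ alone, since the operations of $FN(X)$ are normalizations of pointwise operations and $FN(X)$ is not a subalgebra of $F(X)$. Instead I would use Theorem~\ref{direct limit theorem}, by which $FN(X)$ is isomorphic, as an algebra of the type of $A$, to the direct limit $L$ of the algebras $FC(U)$ over the dense open $U\subseteq X$. Each $FC(U)$ is a subalgebra of $A^U$ with pointwise operations, hence a commutative ring with $1$; a filtered colimit of commutative rings with $1$ is again one; so $FN(X)$ is a commutative ring with $1$, its identity being the constant function $1$.

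For the characterization of idempotents, the ``if'' direction is immediate: if $U\in\mathcal{RO}(X)$ then $\chi_U\in FN(X)$ (the sets $\chi_U^{-1}(\up a)$ are $X$, $U$, or $\varnothing$, all regular open), and the pointwise square of $\chi_U$ is $\chi_U$, which is already normal, so $\chi_U\cdot\chi_U=(\chi_U)^\#=\chi_U$ in $FN(X)$. For the converse, suppose $f\cdot f=f$ in $FN(X)$. By Proposition~\ref{continuousondense}, $f$ is continuous on some dense open $W$, so under the isomorphism of Theorem~\ref{direct limit theorem} it corresponds to the class of $(f|_W,W)$; since the multiplication of $L$ is induced by the pointwise multiplications of the $FC(U)$, the relation $f\cdot f=f$ forces $(f|_{W'})^2=f|_{W'}$ pointwise for some dense open $W'\subseteq W$. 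As $A$ is an integral domain, $c^2=c$ gives $c\in\{0,1\}$, so $f|_{W'}=\chi_V$ with $V=\{x\in W':f(x)=1\}$, which is open in $X$ because $f|_{W'}$ is continuous. By Proposition~\ref{uniqueext} (applied to the dense open set $W'$), $f$ is the unique normal function restricting to $\chi_V$ on $W'$, namely $f=(\chi_V)^\#$; and $(\chi_V)^\#=\chi_{\int(\cl(V))}$ by Remark~\ref{immediate facts}(2), with $\int(\cl(V))$ regular open. Hence $f=\chi_U$ for $U=\int(\cl(V))\in\mathcal{RO}(X)$.

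For the last claim, $U\mapsto\chi_U$ is a well-defined surjection $\mathcal{RO}(X)\to\Id(FN(X))$ by the previous paragraph, and it is injective because $\chi_U=\chi_V$ implies $U=V$. To see it is a Boolean isomorphism I would argue via the order: for regular open $U,V$ the pointwise product of $\chi_U$ and $\chi_V$ equals $\chi_{U\cap V}$, which is normal (as $U\cap V$ is regular open), so $\chi_U\chi_V=\chi_{U\cap V}$ in $FN(X)$; therefore $\chi_U\le\chi_V$ in the Boolean algebra $\Id(FN(X))$ (that is, $\chi_U\chi_V=\chi_U$) iff $U\subseteq V$, which is the order of $\mathcal{RO}(X)$ (whose meet is $\cap$). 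An order isomorphism between Boolean algebras is automatically an isomorphism of Boolean algebras, so we are done. Alternatively, using Remark~\ref{immediate facts}(2) and Proposition~\ref{prop:2.10}, one checks directly that $\chi_{U\wedge V}=\chi_U\chi_V$, $\chi_{\lnot U}=1-\chi_U=(\chi_{X\setminus U})^\#=\chi_{\int(X\setminus U)}$, and $\chi_{U\vee V}=\sup(\chi_U,\chi_V)^\#=\chi_{\int(\cl(U\cup V))}$.

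The crux is the ``only if'' direction of the idempotent characterization: because multiplication on $FN(X)$ is not pointwise, one cannot read off two-valuedness of $f$ from $f^2=f$ directly. The key device is to use Theorem~\ref{direct limit theorem} to descend to a dense open set on which the ring structure is pointwise; there the integral-domain hypothesis on $A$ forces values in $\{0,1\}$, and re-normalizing lands back at the characteristic function of a regular open set. Everything else is routine bookkeeping with the definition of normalization and the de Vries operations on $\mathcal{RO}(X)$.
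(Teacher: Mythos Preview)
Your proof is correct, and the overall architecture matches the paper's: both use Theorem~\ref{direct limit theorem} to get the ring structure, and both reduce the Boolean isomorphism to checking that meet (and complement or order) is preserved. The substantive divergence is in the ``only if'' direction of the idempotent characterization. You pass through the direct-limit description: since $f$ corresponds to $(f|_W,W)$ in $L$ and multiplication there is pointwise on each $FC(U)$, idempotence of $f$ forces $f(x)^2=f(x)$ on a dense open $W'$, whence $f|_{W'}$ is $\{0,1\}$-valued by the domain hypothesis, and Proposition~\ref{uniqueext} recovers $f=\chi_{\int(\cl(V))}$. The paper instead argues directly with images: if $h$ is the pointwise square of $f$ then $f=f^2=h^\#$, so $\func{Im}(f)\subseteq\func{Im}(h)=\{a^2:a\in\func{Im}(f)\}$; comparing cardinalities gives $\func{Im}(f)=\func{Im}(h)$, hence each value satisfies $a_i^2=a_i$, so $f=\chi_U$ with $U=f^{-1}(1)$ regular open by normality. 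Your route is more conceptual and makes transparent why the non-pointwise multiplication is not an obstacle (it becomes pointwise after restriction); the paper's route is more self-contained, avoiding a second appeal to the direct limit and working entirely with the normalization formulas. One small remark: your invocation of Remark~\ref{immediate facts}(2) for $(\chi_V)^\#=\chi_{\int(\cl(V))}$ is literally stated for $V\subseteq X$, whereas your $V\subseteq W'$; the formula still holds by the extended definition of $f^\#$ for $f\in F(U)$ given just before Proposition~\ref{normalization exists}, but it is worth noting.
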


\begin{proof}
Observe that $FC(U)$ is a commutative ring with 1 for each open dense subset $U$ of $X$. Therefore, so is the direct limit of the $FC(U)$. Now apply Theorem~\ref{direct limit theorem} to conclude that $FN(X)$ is a commutative ring with 1.

Let $U \in \mathcal{RO}(X)$. Then $\chi_U$ is idempotent in $F(X)$, and since it is a normal function, it is idempotent in $FN(X)$. Conversely, suppose that $f \in \Id(FN(X))$. If $h$ is the pointwise square of $f$, then $f^2 = h^\#$. By Remark~\ref{immediate facts}(2), $\func{Im}(h^\#) \subseteq \func{Im}(h)$. This implies that $\func{Im}(f) = \func{Im}(f^2) \subseteq \func{Im}(h)$. Moreover, $\func{Im}(h) = \{a^2 : a \in \func{Im}(f)\}$. Let $a_0 < \cdots < a_n$ be the values of $f$. The values of $h$ are $a_0^2, \cdots, a_n^2$. Because $\func{Im}(f) \subseteq \func{Im}(h)$, we have $\func{Im}(f) = \func{Im}(h)$. This then yields $a_i^2 = a_i$ for each $i$. Thus, $a_i \in \{0,1\}$. Consequently, if $U = f^{-1}(1)$, then $f = \chi_U$. By Remark~\ref{immediate facts}(2) and the fact that $f$ is normal, we obtain $\chi_U = f = f^\# = \chi_{{\sf Int}\left({\sf Cl}(U)\right)}$, which shows that $U$ is regular open in $X$.

It is clear that $\chi_{U\cap V}=\chi_U\wedge\chi_V$. Also, if $h$ is the pointwise negation of $\chi_U$, then in $F(X)$ we have $h=-1+\chi_{X-U}$. Therefore, by Remark~\ref{special case}, in $FN(X)$ we have $-\chi_U = h^\# = -1+\chi_{{\sf Int}(X-U)} = -1 + \chi_{\lnot U}$. Thus, in $FN(X)$ we have
\[
\lnot \chi_U = 1-\chi_U  = 1 + (-1 + \chi_{\lnot U}) = \chi_{\lnot U}.
\]
This yields that $U \mapsto \chi_U$ is a Boolean isomorphism between $\mathcal{RO}(X)$ and $\Id(FN(X))$.
\end{proof}

In order to prove that de Vries powers of $A$ are proximity Baer Specker $A$-algebras, we need the following lemma, which will also be used in later sections. We recall \cite[Thm.~5.1]{BMMO13a} that a Specker $A$-algebra $S$ has a unique partial order $\le$ for which $S$ is a torsion-free $f$-algebra over $A$.

\begin{lemma}\label{lem:3.9}
Let $S$ be a torsion-free $f$-algebra over $A$.
\begin{enumerate}
\item If $e \in \func{Id}(S)$, then $0 \le e \le 1$.
\item The restriction of $\le$ to $\Id(S)$ is the Boolean ordering on $\Id(S)$.
\item If $e\in\func{Id}(S)$ and $a\in A$ with $0 \le a \le 1$, then $a \wedge e = ae$.
\item If $e\in\func{Id}(S)$ and $a\in A$ with $1 \le a$, then $ae \wedge 1 = e$.
\item If $0\ne e\in\func{Id}(S)$ and $a\in A$ with $ae\ge 0$, then $a\ge 0$.
\item Let $0\ne e,k \in \func{Id}(S)$ and $0< a,b \in A$. Then $ae \le bk$ iff $a \le b$ and $e \le k$.
\end{enumerate}
\end{lemma}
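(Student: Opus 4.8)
The plan is to prove the six statements roughly in order, leveraging the $f$-algebra axioms, torsion-freeness, and the fact (Birkhoff, cited in the text) that in an $f$-ring $a\wedge b=0$ together with $c\ge 0$ forces $ac\wedge b=0$. For (1), I would start from $e=e^2$ and write $e=e^+-e^-$ with $e^+,e^-\ge 0$ and $e^+e^-=0$. Squaring $e$ and using $e^2=e$ together with the $f$-ring identity to kill the cross term $e^+e^-$ gives $(e^+)^2+(e^-)^2=e^+-e^-$; comparing positive and negative parts (using that $(e^+)^2$ and $(e^-)^2$ are both $\ge 0$ and have disjoint supports) yields $e^-=0$, so $e=e^+\ge 0$. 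Then $1-e$ is also an idempotent, hence $\ge 0$ by the same argument, so $e\le 1$. For (2), note $e\le k$ in the Boolean order means $ek=e$; given $e,k\in\Id(S)$ with $e\le k$ in the $\ell$-order, multiply by $k\ge 0$ and use $e\le k\le 1$ from (1) to get $ek\le k$ and $e=e\cdot e\le e\cdot k$, then a short computation with $(k-e)$ being idempotent (once $ek=e$ is established) closes it; conversely if $ek=e$ then $k-e=(k-e)^2\ge 0$ by (1), so $e\le k$.

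For (3), with $0\le a\le 1$ and $e$ idempotent, I would compute $a\wedge e$ by multiplying: $ae\le a$ and $ae\le e$ are clear since $0\le a\le 1$ and $0\le e\le 1$, so $ae\le a\wedge e$. For the reverse, set $c=(a\wedge e)-ae\ge 0$ and show $ce=0$ and $c(1-e)=0$, whence $c=c(e+(1-e))=0$; here I would use that $(a\wedge e)e=a\wedge e$ — because $a\wedge e\le e$ and multiplying the inequality $a\wedge e\le e$ by $e$, combined with $(a\wedge e)\le a\le 1$, pins it down — and that $(a\wedge e)(1-e)\le e(1-e)=0$. Statement (4) follows from (3) applied to $1/a$... but $A$ need not be a field, so instead I argue directly: if $1\le a$ then $ae\ge e\ge 0$ and $ae\wedge 1\ge e\wedge 1=e$ (using (3) with the role of $a$ there played by... ), while $ae\wedge 1\le 1$, and one checks $(ae\wedge 1)(1-e)\le ae(1-e)\wedge(1-e)$; since $ae(1-e)=0$ this is $\le 0\wedge(1-e)=0$, so $ae\wedge 1$ is supported in $e$, i.e. $(ae\wedge 1)=(ae\wedge 1)e=ae\wedge e=e$ by (3) again (as $e\le 1\le a$ makes... here I must be careful: apply the $f$-ring identity, $(ae\wedge 1)e = ae\wedge e$ since multiplying by $e\ge 0$, and $ae\wedge e=e$ because $e\le ae$ and $e\le e$). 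For (5), given $0\ne e$ idempotent and $ae\ge 0$: write $a=a^+-a^-$; then $a^-e\ge 0$ and $a^-\ge 0$ with $a^+a^-=0$ gives $a^+e\wedge a^-e=0$ by the $f$-ring identity applied twice, and $ae=a^+e-a^-e\ge 0$ combined with disjointness forces $a^-e=0$; since $S$ is torsion-free as an $A$-module and $e\ne 0$, multiplication by $a^-\in A$ killing $e$... actually torsion-freeness says $a^-e=0$ with $e\ne 0$ implies $a^-=0$ in $A$ (this is exactly the faithfulness built into torsion-freeness for domains), so $a=a^+\ge 0$.

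Finally (6) is the main obstacle, as it packages several of the previous parts. Given $0\ne e,k\in\Id(S)$ and $0<a,b\in A$ with $ae\le bk$: multiplying by $e\ge 0$ gives $ae\cdot e\le bk\cdot e$, i.e. $ae\le bke$, and since $ke\le k\le 1$ and $a>0$ we get... I would instead multiply $ae\le bk$ by $e$ to obtain $ae\le b(ke)$ and then by $(1-k)$ to obtain $ae(1-k)\le 0$, so $ae(1-k)=0$ (it is $\ge 0$ as a product of nonnegatives only if... rather: $ae\ge 0$ by (1) and $a>0$, and $ae(1-k)\le bk(1-k)=0$, and $ae(1-k)\ge 0$, so it is $0$); since $ae(1-k)=a\cdot(e(1-k))$ with $a>0$ in the domain, torsion-freeness gives $e(1-k)=0$, i.e. $e=ek\le k$, proving $e\le k$. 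Then $ae\le bk$ becomes $ae\le bk$ with $e\le k$; multiply by $e$: $ae\le b(ke)=be$ since $e\le k$. Now $ae\le be$ with $e\ne 0$: then $(b-a)e\ge 0$, so by (5), $b-a\ge 0$, i.e. $a\le b$. The converse direction is routine: $a\le b$ and $e\le k$ give $ae\le be$ (scalar monotonicity on $e\ge 0$) and $be\le bk$ (since $e\le k$, $b>0$), so $ae\le bk$ by transitivity. The delicate points throughout are keeping track of which inequalities need multiplication by a nonnegative element (to invoke the $f$-ring cross-term-vanishing identity) versus which need torsion-freeness of $S$ over the domain $A$; I expect the bookkeeping in (6), and correctly citing (5) for the scalar conclusion, to be where care is most needed.
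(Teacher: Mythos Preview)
Your plan for (3) has a genuine gap. You set $c=(a\wedge e)-ae\ge 0$ and propose to show $ce=0$ and $c(1{-}e)=0$. The second works, and you correctly note that $0\le a\wedge e\le e$ forces $(a\wedge e)(1{-}e)=0$, hence $(a\wedge e)e=a\wedge e$. But then $ce=(a\wedge e)e-ae\cdot e=(a\wedge e)-ae=c$, not $0$; so your scheme collapses to the tautology $c=ce$. The paper's route avoids this: from $e\wedge(1{-}e)=0$ and the $f$-ring identity, multiply by $1{-}a\ge 0$ and then by $a\ge 0$ to get $(1{-}a)e\wedge a(1{-}e)=0$, i.e.\ $(e-ae)\wedge(a-ae)=0$; translation invariance of $\wedge$ then gives $(e\wedge a)-ae=0$. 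Equivalently, you could salvage your argument by observing $c\le e-ae=(1{-}a)e$ and $c\le a-ae=a(1{-}e)$, so $c\le(1{-}a)e\wedge a(1{-}e)=0$.

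Your argument for (1) can be replaced wholesale by the one-line fact (Birkhoff, Ch.~XVII, Lem.~2, already cited in the paper) that squares in an $f$-ring are nonnegative: $e=e^2\ge 0$ and $1-e=(1-e)^2\ge 0$. Your $e^+,e^-$ decomposition is not wrong, but it is reinventing this lemma. Similarly, for (5) you do not need positive and negative parts of $a$: since $A$ is totally ordered, either $a\ge 0$ and you are done, or $a<0$, whence $-ae\ge 0$; combined with $ae\ge 0$ this gives $ae=0$, contradicting torsion-freeness. Your treatment of (2) is vague where the paper's is direct: if $e\le k$ then $0\le e(1{-}k)\le k(1{-}k)=0$, so $ek=e$; conversely $ek=e$ gives $e=ek\le 1\cdot k=k$ by (1). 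Your (4) is workable once cleaned up (the key identity $(ae\wedge 1)e=ae\cdot e\wedge e=ae\wedge e=e$ does hold in an $f$-ring), and your (6) is essentially the paper's argument.
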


\begin{proof}
(1) Let $e \in \func{Id}(S)$. Then $e = e^2$ is a square in $S$. Since $S$ is an $f$-ring, squares in $S$ are nonnegative \cite[Ch.~XVII, Lem.~2]{Bir79}. This forces $e \ge 0$. The same argument shows that $1-e \ge 0$, so $e \le 1$.

(2) Let $e,k \in \func{Id}(S)$. We must show that $e \le k$ iff $ek=e$. If $ek = e$, then by (1), $e = ek \le 1\cdot k = k$. Conversely, suppose that $e \le k$. By (1), $0 \le 1-k$. Therefore, $0 \le e(1-k) \le k (1-k)= 0$. Thus, $e(1-k)=0$, so $ek =e$.

(3) Let $e \in \func{Id}(S)$ and $a\in A$ with $0 \le a \le 1$. We have $e \wedge (1-e) = 0$. Then, since $1-a\ge 0$ and $S$ is an $f$-ring, $(1-a)e \wedge (1-e) = 0$. Using again that $S$ is an $f$-ring and $a\ge 0$, we obtain $(1-a)e \wedge a(1-e) = 0$. Therefore, $(e-ae) \wedge (a-ae) = 0$. As $S$ is an $\ell$-ring, we have $(r+t)\wedge(s+t)=(r\wedge s)+t$ for each $r,s,t\in S$. This implies $(e\wedge a)-ae=0$, so $a \wedge e = ae$.

(4) Let $e\in\func{Id}(S)$ and $a\in A$ with $1 \le a$. We have $(ae \wedge 1) - e = (a-1)e \wedge (1-e)$. Now, since $e \wedge (1-e) = 0$, $a-1\ge 0$, and $S$ is an $f$-ring, $(a-1)e \wedge (1-e) = 0$. Thus, $(ae\wedge 1)-e=0$, so $ae \wedge 1 = e$.

(5) Let $0\ne e\in\func{Id}(S)$ and $a\in A$ with $ae\ge 0$. By (1), $e\ge 0$. If $a\not\ge 0$, then as $A$ is totally ordered, $a<0$. Therefore, $-a>0$, so $-ae\ge 0$. Thus, since $ae, -ae \ge 0$, we see that $ae=0$. As $e\ne 0$ and $S$ is torsion-free, we conclude that $a=0$, a contradiction. Consequently, $a\ge 0$.

(6) One implication is obvious. For the other, suppose that $ae \le bk$. Then $0\le ae(1-k)\le bk(1-k)=0$. Therefore, $ae(1-k)=0$. As $a\ne 0$ and $S$ is torsion-free, $e(1-k)=0$, so $e=ek$. This, by (2), implies that $e\le k$. Next, $ae\le bk$ implies $ae^2\le bek$, so $ae\le be$. Therefore, $(b-a)e\ge 0$. Thus, by (5), $b-a\ge 0$, so $a\le b$.
\end{proof}

\begin{theorem} \label{FN algebra}
The de Vries power $(FN(X), \prec_X)$ of $A$ is a proximity Baer Specker $A$-algebra.
\end{theorem}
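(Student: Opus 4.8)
The plan is to verify separately that the ring $FN(X)$ is a Baer Specker $A$-algebra and that the relation $\prec_X$ satisfies axioms (P1)--(P10). For the algebraic structure: $FN(X)$ is a commutative ring with $1$ by Lemma~\ref{idempotents of FN}, and it is a torsion-free $f$-algebra over $A$ under its pointwise order because, by Theorem~\ref{direct limit theorem}, it is a directed colimit of the algebras $FC(U)$ along $\ell$-algebra homomorphisms, each $FC(U)=C(U,A_{\func{disc}})$ being a torsion-free $f$-algebra over $A$ (a subdirect power of the totally ordered domain $A$), and both torsion-freeness and the $f$-ring identity pass to such a colimit; one checks using Propositions~\ref{continuousondense} and~\ref{restriction} that the colimit order transported along $\alpha\colon(f,U)\mapsto f^\#$ is exactly the pointwise order on $FN(X)$. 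Moreover, $FN(X)$ is generated as an $A$-algebra by $\Id(FN(X))=\{\chi_U:U\in\mathcal{RO}(X)\}$: if $f\in FN(X)$ has values $a_0<\cdots<a_n$ and $U_i=f^{-1}(\up a_i)$, then Lemma~\ref{easy first}(1), the last part of Remark~\ref{special case}, and Lemma~\ref{AlgebraicPropertiesOfFN}(3) (so that each $(a_i-a_{i-1})\chi_{U_i}$ is the pointwise scalar product) give $f=a_0+\sum_{i=1}^n(a_i-a_{i-1})\chi_{U_i}$ in $FN(X)$. Since $A$ is a domain, \cite[Prop.~4.1]{BMMO13a} then shows that $FN(X)$ is a Specker $A$-algebra, and since $\Id(FN(X))\cong\mathcal{RO}(X)$ is a complete Boolean algebra, \cite[Thm.~4.3]{BMMO13a} shows that $FN(X)$ is a Baer ring.

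For the proximity axioms, (P1)--(P4) and (P9) come essentially for free: (P1) holds since constant functions lie in $FC(X)$ and $f\prec_X f$ iff $f\in FC(X)$ by Theorem~\ref{prox relation}(6), while (P2), (P3), (P4), (P9) are parts (1), (2), (3), (5) of Theorem~\ref{prox relation}. For (P5), given $s\prec_X t$ and $a\in A$, I would use finite-valuedness to choose a single $b\in A$ with $(-s)^{-1}(\up a)=\lnot\,s^{-1}(\up b)$ and $(-t)^{-1}(\up a)=\lnot\,t^{-1}(\up b)$ as in Lemma~\ref{AlgebraicPropertiesOfFN}(5), and then apply axiom (DV5) of the de Vries algebra $\mathcal{RO}(X)$ to $s^{-1}(\up b)\prec t^{-1}(\up b)$; since $a$ is arbitrary, this yields $-t\prec_X-s$. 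For (P7), fix $a_0<\cdots<a_n$ in $A$ containing all values of $s$ and of $t$; since for any $a\in A$ the set $f^{-1}(\up a)$ equals $f^{-1}(\up a_k)$ for the least $k$ with $a_k\ge a$ (an index depending only on $a$ and the sequence), one has $s\prec_X t$ if and only if $s^{-1}(\up a_i)\prec t^{-1}(\up a_i)$ for all $i$; similarly, as $0<a\in A$ in a domain forces $aa_0<\cdots<aa_n$ to contain all values of $as$ and of $at$, one has $as\prec_X at$ if and only if $(as)^{-1}(\up(aa_i))\prec(at)^{-1}(\up(aa_i))$ for all $i$, and Lemma~\ref{AlgebraicPropertiesOfFN}(4) identifies these two families of conditions. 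Hence $s\prec_X t$ holds if and only if $as\prec_X at$ does, which gives (P7).

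The heart of the matter is (P6) and (P8). For (P6), fix $a\in A$; by Lemma~\ref{AlgebraicPropertiesOfFN}(1), $(s+r)^{-1}(\up a)=\bigvee\{s^{-1}(\up b)\cap r^{-1}(\up c):b+c\ge a\}$, and the crucial observation is that this join is \emph{finite}, since $s$ and $r$ are finitely valued. For each relevant pair $b,c$, from $s^{-1}(\up b)\cap r^{-1}(\up c)\le s^{-1}(\up b)\prec t^{-1}(\up b)$ and $s^{-1}(\up b)\cap r^{-1}(\up c)\le r^{-1}(\up c)\prec u^{-1}(\up c)$, axioms (DV3) and (DV4) give $s^{-1}(\up b)\cap r^{-1}(\up c)\prec t^{-1}(\up b)\cap u^{-1}(\up c)\le(t+u)^{-1}(\up a)$, hence $s^{-1}(\up b)\cap r^{-1}(\up c)\prec(t+u)^{-1}(\up a)$; and since a finite join of elements each $\prec(t+u)^{-1}(\up a)$ is again $\prec(t+u)^{-1}(\up a)$ (by (DV4) and (DV5)), we conclude $(s+r)^{-1}(\up a)\prec(t+u)^{-1}(\up a)$, that is, $s+r\prec_X t+u$. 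Axiom (P8) is handled identically, using Lemma~\ref{AlgebraicPropertiesOfFN}(2) in place of part~(1) and the hypotheses $s,t,r,u\ge 0$. Finally, for (P10), let $s>0$ with largest value $a_n$; then $a_n>0$ and $U:=s^{-1}(\up a_n)$ is a nonzero element of $\mathcal{RO}(X)$, so (DV7) supplies a nonzero $V\in\mathcal{RO}(X)$ with $V\prec U$. Put $t:=a_n\chi_V\in FN(X)$ (a positive scalar product, hence pointwise), so $t>0$; for each $d\in A$ one checks $t^{-1}(\up d)\prec s^{-1}(\up d)$ (both sides are $X$ when $d\le 0$, using (DV1); the left side is $V$ and $V\prec U\subseteq s^{-1}(\up d)$ gives the claim via (DV3) when $0<d\le a_n$; the left side is $\varnothing$ when $d>a_n$), so $t\prec_X s$. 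I expect the main obstacle to be exactly this bookkeeping around (P6) and (P8): noticing that finite-valuedness collapses the infinitary joins of Lemma~\ref{AlgebraicPropertiesOfFN}(1)--(2) to finite ones, so that only the finitary de Vries axioms for $\mathcal{RO}(X)$ are needed; a secondary point is confirming that the colimit structure of Theorem~\ref{direct limit theorem} equips $FN(X)$ with the expected pointwise $f$-algebra order.
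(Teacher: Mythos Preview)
Your proposal is correct and follows essentially the same strategy as the paper: verify the Baer Specker $A$-algebra structure first, then check (P1)--(P10), with the key axioms (P6) and (P8) handled exactly as you outline, via Lemma~\ref{AlgebraicPropertiesOfFN}(1)--(2) together with the observation that the joins there are finite.

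There are two minor differences worth noting. First, for the $f$-algebra structure the paper does not go through the direct limit: instead it shows directly that $FN(X)$ is a torsion-free Specker $A$-algebra (using Lemma~\ref{AlgebraicPropertiesOfFN}(3) for torsion-freeness), then invokes \cite[Thm.~5.1]{BMMO13a} to obtain a \emph{unique} $f$-algebra order, and finally checks by hand (via Lemma~\ref{lem:3.9}(5)) that this order coincides with the pointwise one. Your colimit argument is a legitimate alternative, and the verification you flag---that the transported order is pointwise---does go through using Propositions~\ref{continuousondense} and~\ref{restriction} as you indicate; the paper's route avoids this verification at the cost of quoting an external uniqueness theorem. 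Second, your construction for (P10) is actually simpler than the paper's: you use a single idempotent $\chi_V$ with $V\prec s^{-1}(\up a_n)$, whereas the paper builds a witness matching \emph{all} values of $s$ by choosing a $U_i\prec s^{-1}(\up a_i)$ for each $i$. Both work; yours is more economical. Your handling of (P5) and (P7) via a common list of values is also slightly more explicit than the paper's, which applies Lemma~\ref{AlgebraicPropertiesOfFN}(5) and (3)--(4) somewhat tersely.
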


\begin{proof}
We first show that $FN(X)$ is a Baer Specker $f$-algebra. If $f \in FN(X)$, then Lemma~\ref{easy first}(1) and Remark~\ref{decreasing} show that $f$ is a linear combination of idempotents. To see that $FN(X)$ is torsion-free over $A$, if $f \in FN(X)$ and $0 \ne a \in A$ with $af = 0$, we may assume that $a > 0$. By Lemma~\ref{AlgebraicPropertiesOfFN}(3), $af$ is the pointwise scalar product. Therefore, $af(x) = 0$ for each $x \in X$. Since $A$ is a domain, this forces $f(x) = 0$ for all $x\in X$. Thus, $f = 0$, and so $FN(X)$ is a Specker $A$-algebra. Next, by Lemma~\ref{idempotents of FN}, $\Id(FN(X))$ is isomorphic to the complete Boolean algebra $\mathcal{RO}(X)$, so $FN(X)$ is a Baer ring by \cite[Thm.~4.3]{BMMO13a}. Finally, to see that $FN(X)$ is an $f$-algebra with respect to the pointwise order $\le$, since $FN(X)$ is a Specker $A$-algebra, by \cite[Thm.~5.1]{BMMO13a}, it has a unique partial order $\le^\prime$ which makes it into an $f$-algebra. We show that $\le^\prime$ is the same as the pointwise order $\le$ on $FN(X)$. Since $FN(X)$ is an $f$-algebra with respect to $\le^\prime$, squares are nonnegative \cite[Ch.~XVII, Lem.~2]{Bir79}. Therefore, idempotents in $FN(X)$ are nonnegative. Let $f \in FN(X)$ and let $a_0 < \cdots < a_n$ be the values of $f$. Set $U_i = f^{-1}({\uparrow}a_i)$ for each $i$. Then each $U_i$ is regular open and $f = a_0 + \sum_{i=1}^{n} (a_{i} - a_{i-1}) \chi_{U_{i}}$. Since $a_i - a_{i-1} > 0$ and $\chi_{U_i} \ge^\prime 0$, we have $0 \le^\prime \sum_{i=1}^{n} (a_{i} - a_{i-1}) \chi_{U_{i}}$. Thus, if $0 \le a_0$, then $0 \le^\prime f$. Conversely, suppose that $0 \le^\prime f$. If $f = a_0$, then $0\le a_0$. Suppose that $n \ge 1$. Then $U_1$ is a proper regular open set. Therefore, $\lnot U_1 \ne \varnothing$, so $\chi_{\lnot U_1}$ is a nonzero idempotent in $FN(X)$. Since it is nonnegative, we get $0 \le^\prime f\chi_{\lnot U_1} = a_0\chi_{\lnot U_1}$. Consequently, by Lemma~\ref{lem:3.9}(5), $a_0 \ge 0$. Thus, $0 \le^\prime f$ iff $0 \le a_0$. On the other hand, it is clear for the pointwise order $\le$ that $0 \le f$ iff $0 \le a_0$ as $a_0$ is the smallest value of $f$. Thus, $\le^\prime$ is equal to $\le$, and so $FN(X)$ is an $f$-algebra with respect to the pointwise order $\le$.

It remains to show that $\prec_X$ is a proximity in the sense of Definition~\ref{proximity definition}. That axioms (P1), (P2), (P3), (P4), and (P9) hold follows from Theorem~\ref{prox relation}.

(P5) Suppose that $f \prec_X g$. Then $f^{-1}(\up a) \prec g^{-1}(\up a)$ for each $a \in A$, so $\lnot g^{-1}(\up a) \prec \lnot f^{-1}(\up a)$. Therefore, by Lemma~\ref{AlgebraicPropertiesOfFN}(5), $(-g)^{-1}(\up a) \prec (-f)^{-1}(\up a)$. Thus, $-g \prec_X -f$.

(P6) Suppose that $f \prec_X h$ and $g \prec_X k$. Then, for each $b \in A$, we have $f^{-1}(\up b) \prec h^{-1}(\up b)$ and $g^{-1}(\up b) \prec k^{-1}(\up b)$. Therefore, by Lemma~\ref{AlgebraicPropertiesOfFN}(1) and the fact that the join in question involves only finitely many regular open sets, $(f+g)^{-1}(\up a) \prec (h+k)^{-1}(\up a)$ for each $a \in A$. Thus, $f+g \prec_X h+k$.

(P7) Suppose that $f \prec_X g$ and $0 < c \in A$. Lemma~\ref{AlgebraicPropertiesOfFN}(3) then implies that $(cf)^{-1}(\up a) \prec (cg)^{-1}(\up a)$ for each $a \in A$. Thus, $cf \prec_X cg$. Conversely, suppose that $c > 0$ and $cf \prec cg$. Then, for each $b \in A$, we have $(cf)^{-1}(\up b) \prec (cg)^{-1}(\up b)$. Setting $b = ca$ and applying Lemma~\ref{AlgebraicPropertiesOfFN}(4) yields $f^{-1}(\up a) \prec g^{-1}(\up a)$ for each $a \in A$. Thus, $f \prec_X g$.

(P8) Suppose that $f,g,h,k\ge 0$, $f \prec_X h$, and $g \prec_X k$. Lemma~\ref{AlgebraicPropertiesOfFN}(2) and the fact that the join in question involves only finitely many regular open sets then give $(fg)^{-1}(\up a) \prec (hk)^{-1}(\up a)$ for each $a \in A$. Thus, $fg \prec_X hk$.

(P10) Let $0 < g$. Then $X = g^{-1}(\up 0)$ and there is $b > 0$ with $g^{-1}(\up b) \ne \varnothing$. Let $a_0 < \cdots < a_n$ be the values of $g$. We have $0 \le a_0$ and at least one of the $a_i$ satisfies $a_i > 0$. For each $i > 1$ choose a regular open $U_i$ with $\varnothing \ne U_i \prec g^{-1}(\up a_i)$, and set $U_0 = X$. Since $g^{-1}(\up a_0) = X$, we have $U_i \prec g^{-1}(\up a_i)$ for each $i$. Set $V_i = U_0 \cap \cdots \cap U_i$ for $0 \le i \le n$ and $V_{n+1} = \varnothing$. Define $f$ by $f(x) = a_i$ if $x \in V_i - V_{i+1}$. Then $f \in FN(X)$ and $f \prec_X g$. Furthermore, $0 < f$ since each value of $f$ is at least $0$ and one value is greater than $0$.
\end{proof}

In Corollary~\ref{FN algebra char} we prove the converse, that every proximity Baer Specker $A$-algebra is of the form $(FN(X),\prec_X)$ for an appropriate choice of $X$. This is accomplished through a more nuanced investigation of proximity Specker $A$-algebras.

\section{Proximity Specker algebras}

In this section we continue to assume that $A$ is a totally ordered domain; however, we no longer assume that $X$ is a fixed compact Hausdorff space.
In the last section, we established that when $X$ is a compact Hausdorff space, then $(FN(X),\prec_X)$ is a proximity Baer Specker $A$-algebra. In this section we show that every proximity Baer Specker $A$-algebra is of the form $(FN(X),\prec_X)$, for some compact Hausdorff space $X$, and we prove a uniqueness statement for the proximity on $FN(X)$. In fact, these results can be framed in the more general context of proximity Specker $A$-algebras.

Let $S$ be a Specker $A$-algebra. We call a set $E$ of nonzero idempotents of $S$ \emph{orthogonal} if $ek=0$ for all $e\ne k$ in $E$. We say that $s\in S$ is in \emph{orthogonal form} provided $s = \sum_{i=0}^n a_i e_i$, where the $a_i \in A$ are distinct and the $e_i$ are orthogonal. If in addition $\bigvee_{i=0}^n e_i=1$, then we say that $s$ is in \emph{full orthogonal form}. By \cite[Lem.~2.1]{BMMO13a}, each $s \in S$ has a unique full orthogonal decomposition.

We say that $s \in S$ is in \emph{decreasing form} if $s= a_0 + \sum_{i=1}^n b_ik_i$, where each $b_i > 0$ and $1 = k_0 > k_1 > \cdots > k_n > 0$. There is a close connection between orthogonal and decreasing decompositions. To see this, write $s = \sum_{i=0}^n a_ie_i$ in full orthogonal form, and suppose $a_0 < \cdots < a_n$. Since the $e_i$ are orthogonal, $e_i + \cdots + e_n = e_i \vee \cdots \vee e_n$. Therefore,
\begin{eqnarray*}
s &=& \sum_{i=0}^n a_i e_i = a_0(e_0 + \cdots + e_n) + (a_1 - a_0)(e_1 + \cdots + e_n) + \cdots + (a_n - a_{n-1})e_n \\
&=& a_0 + \sum_{i=1}^{n} (a_{i} - a_{i-1}) (e_{i} \vee \cdots \vee e_n) = a_0 + \sum_{i=1}^{n} (a_{i} - a_{i-1}) k_{i},
\end{eqnarray*}
where $k_{i}=e_{i} \vee \cdots \vee e_n$. This writes $s$ in decreasing form. Conversely, if $s= a_0 + \sum_{i=1}^n b_ik_i$ is in decreasing form, then we can recover the full orthogonal decomposition of $s$ as follows:
\begin{eqnarray*}
s &=& a_0 + b_{1}(k_1-k_2) + (b_1 + b_2)(k_2-k_3) + \cdots + (b_1 + \cdots + b_{n-1})(k_{n-1}- k_n) \\
& & + (b_1 + \cdots + b_n)k_n \\
&=& a_0(k_0 \wedge \lnot k_1) +  (a_0 + b_{1})(k_1 \wedge \lnot k_2) + (a_0 + b_1 + b_2)(k_2 \wedge \lnot k_3) + \cdots \\
& & + \left(a_0 + \sum_{i=1}^{n-1} b_i\right)(k_{n-1} \wedge \lnot k_{n}) + \left(a_0 + \sum_{i=1}^n b_n\right)k_{n}.
\end{eqnarray*}
Set $e_i=k_i\wedge \lnot k_{i+1}$ and for $i \ge 1$, set $a_{i} = a_0 + \sum_{j=1}^i b_{j}$. Then $a_0 < \cdots < a_n$ and $s=\sum_{i=0}^n a_ie_i$ is in full orthogonal form. Since $s$ has a unique full orthogonal decomposition, we see from the above correspondence that $s$ also has a unique decreasing decomposition.

\begin{proposition}\label{prop:4.6}
If $(S,\prec)$ is a proximity Specker $A$-algebra, then $\prec$ restricts to a proximity on $\Id(S)$.
\end{proposition}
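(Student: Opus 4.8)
The plan is to verify each of the proximity axioms (DV1)--(DV7) for the restriction of $\prec$ to $\Id(S)$, translating them from the corresponding axioms (P1)--(P10) for $\prec$ on $S$. The key preliminary observation is that the Boolean operations on $\Id(S)$ are expressible in terms of the ring operations: $e \wedge k = ek$, $e \vee k = e + k - ek$, and $\lnot e = 1 - e$; moreover, by Lemma~\ref{lem:3.9}(1)--(2), every idempotent lies in $[0,1]$ and the ring order restricts to the Boolean order on $\Id(S)$. I would state these reductions first so that the rest of the argument is a routine translation.

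The axioms match up as follows. (DV1) $1 \prec 1$ is immediate from (P1). (DV2) $e \prec k \Rightarrow e \le k$ is (P2) together with Lemma~\ref{lem:3.9}(2). (DV3) $e \le k \prec m \le n \Rightarrow e \prec n$ (with $\le$ now the Boolean order, hence the ring order) is exactly (P3). (DV5) $e \prec k \Rightarrow \lnot k \prec \lnot e$: from (P5) we get $-k \prec -e$, and then adding $1 \prec 1$ (which holds by (P1)) via (P6) gives $1 - k \prec 1 - e$, i.e. $\lnot k \prec \lnot e$. (DV6), interpolation, is immediate from (P9), provided one checks the interpolant can be taken idempotent; this is where a small argument is needed (see below). (DV7): if $e \ne 0$ then by (P10) there is $0 < s \prec e$ in $S$; one must replace $s$ by a nonzero idempotent $\le s$, again requiring a small argument.

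The one genuine subtlety, and the step I expect to be the main obstacle, is (DV4): $e \prec k$ and $e \prec m$ imply $e \prec k \wedge m = km$. Here (P4) gives $e \prec k \wedge m$ where $k \wedge m$ is the \emph{lattice} meet in $S$, but by Lemma~\ref{lem:3.9}(3) applied with the idempotent $m$ and the element $k \in [0,1]$ (itself an idempotent, hence in $[0,1]$ by Lemma~\ref{lem:3.9}(1)), the lattice meet $k \wedge m$ coincides with the product $km$, which is the Boolean meet in $\Id(S)$. So (P4) plus Lemma~\ref{lem:3.9}(3) yields (DV4). The same circle of ideas resolves the idempotency issues in (DV6) and (DV7): given $e \prec s$ in $S$ with $0 < s$ (respectively an interpolant $s$ with $e \prec s \prec k$, $e, k$ idempotent), I would replace $s$ by $s \wedge 1 = s \wedge 1$ and then by a suitable idempotent. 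Concretely, writing $s$ in decreasing form $s = a_0 + \sum b_i k_i$ with $k_i$ idempotent, the idempotent $k_1$ (the largest one appearing with positive coefficient) satisfies $e \le k_1 \le s \wedge \cdots$; one checks using (P2), (P3) and Lemma~\ref{lem:3.9}(2) that $e \prec k_1$ and $k_1 \prec k$, giving an idempotent interpolant. Thus the only real work is this bookkeeping with orthogonal/decreasing forms to extract an idempotent below a given positive element preserving $\prec$; everything else is a direct citation of (P1)--(P10) and Lemma~\ref{lem:3.9}.
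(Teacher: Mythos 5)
Your outline of (DV1)--(DV5) matches the paper's proof, but the part you yourself flag as needing ``a small argument'' --- producing an \emph{idempotent} interpolant for (DV6) and a nonzero idempotent below $k$ for (DV7) --- is where the real content lies, and your sketch of it does not work. The claim that one can check $e \prec k_1$ and $k_1 \prec k$ ``using (P2), (P3) and Lemma~\ref{lem:3.9}(2)'' is false. From the decreasing form $s = a_0 + \sum_i b_i k_i$ all that (P3) gives is $b_1 k_1 \le s \prec k$, hence $b_1 k_1 \prec k$; but $b_1$ may satisfy $0 < b_1 < 1$ (e.g.\ over $A=\mathbb{Q}$ or $\mathbb{R}$), and passing from $b_1 k_1 \prec k$ to $k_1 \prec k$ is exactly the nontrivial ``de-scaling'' step. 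The paper does it by first getting $k_1 \prec 1$, applying (P7) to obtain $b_1 k_1 \prec b_1$, combining with (P4) and Lemma~\ref{lem:3.9}(3) to get $b_1 k_1 \prec b_1 \wedge k = b_1 k$, and only then using the \emph{second} half of (P7) to cancel $b_1$; the other inclusion $e \prec l$ similarly needs Lemma~\ref{lem:3.9}(4) and (6), not just order bookkeeping. Your intermediate inequality ``$e \le k_1 \le s$'' is also false in general (take $s = \tfrac12\chi_{W_1} + \tfrac12\chi_{W_2}$: then $k_1 = \chi_{W_1} \not\le s$), and your fallback for (DV7), ``replace $s$ by a nonzero idempotent $\le s$,'' fails outright: if $A=\mathbb{R}$ and $s$ is the constant $\tfrac12$, there is no nonzero idempotent below $s$ (Lemma~\ref{lem:3.9}(6)). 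Note also that you cannot rescue these steps by arguing inside $FN(X)$ via $\eta$, since the representation theorem is proved \emph{after} and \emph{from} this proposition.

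Two smaller points. For (DV4) you invoke Lemma~\ref{lem:3.9}(3) with two idempotents, but that lemma is stated for a scalar $a \in A$ and an idempotent; what you need is that the lattice meet in $S$ of two idempotents equals their product, which is true and easy (from $0 \le e,k \le 1$ one gets $t = te = tk$ for any $0 \le t \le e,k$, hence $t = t(ek) \le ek$), and the paper simply treats (DV4) as obvious. So the (DV4) issue is a fixable misattribution; the genuine gap is the missing (P7)/(P4)/Lemma~\ref{lem:3.9}(3)--(4),(6) argument that extracts idempotents proximal to $k$ from elements proximal to $k$ in (DV6) and (DV7).
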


\begin{proof}
Axioms (DV1)--(DV4) are obvious. To verify (DV5), let $e,k\in\func{Id}(S)$ with $e\prec k$. By (P5), $-k\prec-e$, so (P1) and (P6) yield $1-k\prec 1-e$. But $1-k=\neg k$ and $1-e=\neg e$. Thus, $\neg k\prec\neg e$, and (DV5) is satisfied. To verify (DV6), let $e,k \in \func{Id}(S)$ with $e \prec k$. By (P9), there is $s \in S$ with $e \prec s \prec k$. Write $s = \sum_{i=1}^n a_i e_i$ in orthogonal form with each $a_i \ne 0$. Since the $e_i$ are orthogonal, $se_i=a_ie_i$ for each $i$. By Lemma~\ref{lem:3.9}(1), $e_i\ge 0$, so as $s\ge 0$, we have $se_i\ge 0$, hence $a_ie_i\ge 0$ for each $i$. This, by Lemma~\ref{lem:3.9}(5), yields $a_i > 0$. Since $a_i e_i \le s$ for each $i$, by (P3), $a_i e_i \prec k$. Then $a_i e_i \le k$ by (P2), so $a_i \le 1$ by Lemma~\ref{lem:3.9}(6). From Lemma~\ref{lem:3.9}(1), (P1), and (P3) it follows that $e_i \prec 1$. Therefore, by (P7), $a_i e_i \prec a_i$. Thus, by (P4) and Lemma~\ref{lem:3.9}(3), $a_ie_i \prec a_i \wedge k = a_ik$. Then (P7) yields $e_i \prec k$. This implies $l := e_1 \vee \cdots \vee e_n \prec k$. Finally, if $a = \sum_{i=1}^n a_i$, then $s \le a(e_1 \vee \cdots \vee e_n) = al$, so $e\prec al$ by (P3). Then $e \le al$ by (P2), so $1 \le a$ by Lemma~\ref{lem:3.9}(6). Therefore, as $e \prec 1$ by Lemma~\ref{lem:3.9}(1), (P1), and (P3), we obtain $e \prec al\wedge 1 = l$ by (P4) and Lemma~\ref{lem:3.9}(4). We thus have an idempotent $l$ with $e \prec l \prec k$, so (DV6) is satisfied. To verify (DV7), let $k$ be a nonzero idempotent in $S$. By (P11), there is $1 < s \in S$ with $s \prec k$. Write $s = \sum_{i=1}^n a_i e_i$ as before. Then $a_1 e_1 \le s$, so $a_1 e_1 \prec k$ by (P3), and the same argument as above yields $e_1 \prec k$. Therefore, (DV7) is satisfied. Thus, the restriction of $\prec$ to $\Id(S)$ is a proximity on $\Id(S)$.
\end{proof}

We next use Proposition~\ref{prop:4.6} to establish a representation theorem for an arbitrary proximity Specker $A$-algebra $(S,\prec)$ by showing that $(S,\prec)$ embeds into $(FN(X),\prec_X)$ for an appropriate choice of compact Hausdorff space $X$. Specifically, $X$ is the space of ends of $(\Id(S),\prec)$, which we next recall. Let $B$ be a Boolean algebra and let $\prec$ be a proximity on $B$. For $E\subseteq B$, let ${\thd}E = \{a\in B: a\prec e \mbox{ for some } e\in E\}$, and define ${\thu}E$ dually. We call an ideal $I$ of $B$ \emph{round} if $I={\thd}I$. Dually, we call a filter $F$ of $B$ \emph{round} if ${\thu}F=F$. The dual compact Hausdorff space of $(B,\prec)$ can be constructed either by means of maximal round ideals or maximal round filters of $(B,\prec)$. In fact, there is a bijection between maximal round filters and maximal round ideals given by $F \mapsto \{ b : \lnot b \in F \}$. De Vries preferred to work with maximal round filters. We will instead work with maximal round ideals. Our choice is motivated by their close connection to minimal prime ideals of a Specker $A$-algebra, which will be discussed in Section 7. Because maximal round filters are called ends in the literature, we will use the same term for maximal round ideals.

Let $X$ be the set of ends of $(B,\prec)$. For $a\in B$, let $\zeta(a)=\{x\in X: a\in x\}$. Define a topology on $X$ by letting $\zeta[B]=\{\zeta(a) : a\in B\}$ be a basis for the topology. The bijection above is a homeomorphism between $X$ and the space of maximal round filters, topologized by the basis consisting of $\xi(a) = \{F : a \in F\}$ for $a \in B$.  By \cite[Ch.~I.3]{deV62}, the space of maximal round filters is compact Hausdorff. Thus, $X$ is compact Hausdorff.

Adopting \cite[Def.~I.3.7]{deV62}, we call a subset $T$ of a proximity Specker $A$-algebra $(S,\prec)$ \emph{dense} if for each $s,r\in S$ with $s\prec r$, there is $t\in T$ with $s\prec t\prec r$.

\begin{theorem}\label{rep}
Let $(S,\prec)$ be a proximity Specker $A$-algebra, and let $X$ be the space of ends of $(\Id(S),\prec)$. Then there is an $\ell$-algebra embedding $\eta : S \rightarrow FN(X)$ such that $\eta[S]$ is dense in $FN(X)$ and $s \prec t$ iff $\eta(s) \prec_X \eta(t)$ for all $s,t \in S$.
\end{theorem}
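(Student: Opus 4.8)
The idea is to build $\eta$ from the known correspondence $\Id(S)\cong\mathcal{RO}(X)$ (de Vries duality applied to $(\Id(S),\prec)$, which is a de Vries algebra once $S$ is a Baer ring, but here we only need that the Boolean algebra $\Id(S)$ is isomorphic to the Boolean algebra of regular open sets of the dual space $X$ of the proximity Boolean algebra $(\Id(S),\prec)$ via $\zeta$). Let $\beta:\Id(S)\to\mathcal{RO}(X)\cong\Id(FN(X))$ be this Boolean isomorphism, so $\beta(e)=\chi_{\zeta(e)}$; note it is an isomorphism of Boolean algebras but we must check it is well behaved with respect to $\prec$. First I would verify that $\zeta$ (equivalently $\beta$) is a \emph{proximity isomorphism} of $(\Id(S),\prec)$ onto $(\mathcal{RO}(X),\prec)$, i.e. $e\prec k$ iff $\cl(\zeta(e))\subseteq\zeta(k)$; this is exactly the content of de Vries' representation of $(B,\prec)$ via its ends, combined with Proposition~\ref{prop:4.6} which guarantees $\prec$ restricts to a genuine proximity on $\Id(S)$.

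Next I would define $\eta$ on an element $s\in S$ written in full orthogonal form $s=\sum_{i=0}^n a_i e_i$ (with $a_0<\cdots<a_n$, which we may arrange since $A$ is totally ordered) by $\eta(s)=\sum_{i=0}^n a_i\,\chi_{\zeta(e_i)}$, the pointwise sum in $F(X)$; equivalently, using the decreasing form $s=a_0+\sum_{i=1}^n(a_i-a_{i-1})k_i$ with $k_i=e_i\vee\cdots\vee e_n$, we have $\eta(s)=a_0+\sum_{i=1}^n(a_i-a_{i-1})\chi_{\zeta(k_i)}$, and since the $\zeta(k_i)$ are decreasing regular open sets, Remark~\ref{decreasing} shows $\eta(s)\in FN(X)$ and in fact this pointwise sum already equals the sum in $FN(X)$ (Remark~\ref{special case}, last paragraph). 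So $\eta(s)^{-1}(\up a_i)=\zeta(k_i)=\beta(k_i)$ for each $i$. Well-definedness is immediate from uniqueness of the orthogonal/decreasing decomposition (established in the text preceding Proposition~\ref{prop:4.6}). That $\eta$ is injective follows because $\eta(s)=0$ forces all $\chi_{\zeta(e_i)}=0$ for $a_i\ne 0$, hence $\zeta(e_i)=\varnothing$, hence $e_i=0$ by injectivity of $\zeta$, so $s=0$.

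The bulk of the work is showing $\eta$ is an $\ell$-algebra homomorphism. I would check it preserves $+$, scalar multiplication, $\cdot$, and the lattice order/operations. The key mechanism is that all the relevant operations on $FN(X)$ were computed in Lemma~\ref{AlgebraicPropertiesOfFN} in terms of $f^{-1}(\up a)$ and the Boolean operations on $\mathcal{RO}(X)$, while the same operations on $S$ can be computed on full orthogonal forms in terms of the Boolean operations on $\Id(S)$; since $\beta$ is a Boolean isomorphism carrying $e_i$ to $\chi_{\zeta(e_i)}$, the two computations match. Concretely: for $s=\sum a_ie_i$, $t=\sum b_jf_j$ in full orthogonal form, $s+t=\sum_{i,j}(a_i+b_j)e_if_j$ (discarding zero idempotents and merging equal coefficients), and one checks $\eta(s+t)^{-1}(\up a)$ agrees with the formula of Lemma~\ref{AlgebraicPropertiesOfFN}(1) applied to $\eta(s),\eta(t)$ using $\beta(e_i\wedge f_j)=\beta(e_i)\wedge\beta(f_j)$ and the fact that finite joins in $\mathcal{RO}(X)$ transport correctly; products use part (2) together with Lemma~\ref{lem:3.9}(5)--(6) to handle signs and the total order; scalar multiplication uses part (3) together with Lemma~\ref{lem:3.9}(6); order-preservation uses Lemma~\ref{refine} together with $e\le k$ iff $\zeta(e)\subseteq\zeta(k)$. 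Alternatively — and this is cleaner — one invokes \cite[Thm.~3.8]{BMMO13a}: a Specker $A$-algebra is determined up to $A$-algebra isomorphism by its Boolean algebra of idempotents, and $\eta$ is the unique $A$-algebra map extending the Boolean isomorphism $\beta:\Id(S)\to\Id(FN(X))$, hence automatically an $\ell$-algebra isomorphism onto the subalgebra of $FN(X)$ generated by $\Id(FN(X))$.

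Finally I would prove the two remaining claims. For $s\prec t\iff\eta(s)\prec_X\eta(t)$: write both in a common decreasing form $s=a_0+\sum(a_i-a_{i-1})k_i$, $t=a_0+\sum(a_i-a_{i-1})l_i$ over the same chain $a_0<\cdots<a_n$ (possible after refining); then by definition $\eta(s)^{-1}(\up a_i)=\zeta(k_i)$, $\eta(t)^{-1}(\up a_i)=\zeta(l_i)$, so $\eta(s)\prec_X\eta(t)$ iff $\zeta(k_i)\prec\zeta(l_i)$ in $\mathcal{RO}(X)$ for all $i$, iff (by the proximity isomorphism $\zeta$) $k_i\prec l_i$ in $\Id(S)$ for all $i$. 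It remains to see that, for $s,t$ written over a common chain, $s\prec t$ in $S$ is equivalent to $k_i\prec l_i$ for all $i$; the ``$\Rightarrow$'' direction is the argument pattern of Proposition~\ref{prop:4.6} (extract $k_i$ and $l_i$ from $s,t$ using (P2)--(P7) and Lemma~\ref{lem:3.9}), and ``$\Leftarrow$'' follows by reassembling via (P6), (P7), and scalar multiplication. For density of $\eta[S]$ in $FN(X)$: given $f\prec_X g$ in $FN(X)$ with values among $a_0<\cdots<a_n$, we have $f^{-1}(\up a_i)\prec g^{-1}(\up a_i)$ in $\mathcal{RO}(X)$; since $\zeta$ is a proximity isomorphism onto $(\mathcal{RO}(X),\prec)$ and, crucially, $\Id(S)$ is \emph{dense} in $(\mathcal{RO}(X),\prec)$ in the de Vries sense (true because $\zeta$ realizes $\mathcal{RO}(X)$ as the dual of $(\Id(S),\prec)$ and a proximity Boolean algebra is always dense in its de Vries completion — \cite[Ch.~I.3]{deV62}), there exist $U_i\in\mathcal{RO}(X)$ of the form $U_i=\zeta(e_i)$ with $f^{-1}(\up a_i)\prec U_i\prec g^{-1}(\up a_i)$; intersecting to make them decreasing as in the proof of Theorem~\ref{prox relation}(5) and assembling $h=a_0+\sum(a_i-a_{i-1})\chi_{V_i}$ with $V_i=\zeta(e_0)\cap\cdots\cap\zeta(e_i)=\zeta(e_0\wedge\cdots\wedge e_i)$ gives $h\in\eta[S]$ with $f\prec_X h\prec_X g$.

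\textbf{Main obstacle.} The genuinely delicate point is the equivalence $s\prec t\iff(k_i\prec l_i\text{ for all }i)$ for $s,t$ in common decreasing form — i.e. reducing the proximity on all of $S$ to the proximity on $\Id(S)$. The ``$\Rightarrow$'' half requires carefully peeling off each layer $k_i$ (resp.\ $l_i$) from $s$ (resp.\ $t$) using axioms (P2)--(P7) and the lattice lemmas, in the style of Proposition~\ref{prop:4.6}, and one must be attentive to the totally ordered coefficients and to the roles of Lemma~\ref{lem:3.9}(3),(4),(6). Everything else is either a direct transport of structure along the Boolean isomorphism $\zeta$ or a routine reassembly using (P6)--(P8); in particular the homomorphism property is best outsourced to \cite[Thm.~3.8]{BMMO13a}.
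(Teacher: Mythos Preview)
Your plan is correct and matches the paper's proof closely: define $\eta$ by extending the Boolean map $e\mapsto\chi_{\zeta(e)}$ to an $A$-algebra homomorphism (the paper cites \cite[Sec.~2]{BMMO13a} and \cite[Cor.~5.3]{BMMO13a} rather than \cite[Thm.~3.8]{BMMO13a}, but the idea is the same), check injectivity via the decreasing form, and obtain density from the de Vries density of $\zeta[\Id(S)]$ in $\mathcal{RO}(X)$.

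Two points deserve tightening. First, you repeatedly call $\zeta$ (equivalently $\beta$) a Boolean \emph{isomorphism} onto $\mathcal{RO}(X)$; in the general (non-Baer) case it is only a proximity-preserving and proximity-reflecting \emph{embedding} with dense image. You clearly know this, since your density argument uses it, but the sentence ``hence automatically an $\ell$-algebra isomorphism onto the subalgebra of $FN(X)$ generated by $\Id(FN(X))$'' is wrong as stated: the image is the subalgebra generated by $\{\chi_{\zeta(e)}:e\in\Id(S)\}$, which is generally proper. Second, for the forward implication $s\prec t\Rightarrow k_i\prec l_i$ you gesture at ``the argument pattern of Proposition~\ref{prop:4.6}'', but that proposition only handles the case where one side is an idempotent. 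The paper's device is cleaner and worth knowing: from $s\prec t$ and the proximity axioms one gets
\[
[(s-a_{i-1})\vee 0]\wedge(a_i-a_{i-1})\ \prec\ [(t-a_{i-1})\vee 0]\wedge(a_i-a_{i-1}),
\]
and then, because $\eta$ has \emph{already} been shown to be an $\ell$-algebra embedding, one computes both sides by applying $\eta$ and evaluating in $FN(X)$, obtaining $(a_i-a_{i-1})e_i$ and $(a_i-a_{i-1})k_i$ respectively; (P7) finishes. This bootstrapping through $\eta$ avoids a direct inductive peeling argument in $S$.
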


\begin{proof}
Let $B = \func{Id}(S)$. By Proposition~\ref{prop:4.6}, the restriction of $\prec$ is a proximity on $B$. Let $X$ be the space of ends of $(B,\prec)$. Then $\zeta : B \to \mathcal{RO}(X)$ is an embedding \cite[Ch.~I.3]{deV62}. We thus have a map $\sigma : B \to \func{Id}(FN(X))$ defined by $\sigma(e) = \chi_{\zeta(e)}$. This is a Boolean homomorphism since
\[
\sigma(e\wedge k)=\chi_{\zeta(e\wedge k)}=\chi_{\zeta(e) \cap \zeta(k)}=\chi_{\zeta(e)} \wedge \chi_{\zeta(k)}=\sigma(e)\wedge\sigma(k)
\]
and
\[
\sigma(\lnot e)=\chi_{\zeta(\lnot e)}=\chi_{\lnot\zeta(e)}=\lnot\chi_{\zeta(e)}=\lnot\sigma(e).
\]
Since $S$ is a Specker $A$-algebra, by \cite[Sec.~2]{BMMO13a} there is a uniquely determined $A$-algebra homomorphism $\eta : S \to FN(X)$ extending $\sigma$. By \cite[Cor.~5.3]{BMMO13a}, $\eta$ is an $\ell$-algebra homomorphism. To see that $\eta$ is 1-1, let $s\ne 0$. As noted in the beginning of the section, we may write $s$ in decreasing form
\[
s = a_0 + \sum_{i=1}^{n} (a_{i} - a_{i-1}) e_{i},
\]
with $a_0 < \cdots < a_n$ in $A$ and $1 = e_0 > e_1 > \cdots > e_n>0$ in $B$. Then $\eta(s) = a_0 + \sum_{i=1}^{n} (a_{i} - a_{i-1}) \chi_{\zeta(e_{i})}$. Therefore, $\eta(s)(x) = a_i$ provided $x \in \zeta(e_i) - \zeta(e_{i+1})$. If $s = a_0$, then as $s \ne 0$, we have $a_0\ne 0$, so $\eta(s)\ne 0$. Otherwise $n > 0$, so $e_1 \ne 0$. Thus, if $x \in \zeta(e_1)$, then $\eta(s)(x) \ge a_1$ and if $x \in X - \zeta(e_1)$, then $\eta(s)(x) = a_0$. Since $a_0 < a_1$, we see that $\eta(s) \ne 0$.

We next show that $\eta[S]$ is dense in $FN(X)$. Let $f,g \in FN(X)$ with $f \prec_X g$. Suppose $a_0 < \cdots < a_n$ contain all the values of $f$ and $g$. Set $U_i=f^{-1}(\up a_i)$ and $V_i=f^{-1}(\up a_i)$. From $f \prec_X g$ it follows that $U_i \prec V_i$ in $\mathcal{RO}(X)$. By \cite[Thm.~I.3.9]{deV62}, $\zeta[B]$ is dense in $\mathcal{RO}(X)$. Therefore, for each $i$ there is $e_i\in B$ with $U_i \prec \zeta(e_i) \prec V_i$, and as in the proof of Theorem~\ref{prox relation}(5), we may assume that the $e_i$ are decreasing. Set $s=a_0 + \sum_{i=1}^{n} (a_{i} - a_{i-1})e_{i}$ and $h=\eta(s)$. Then $h = a_0 + \sum_{i=1}^{n} (a_{i} - a_{i-1})\chi_{\zeta(e_{i})}$. Also, by Lemma~\ref{easy first}(1) and Remark~\ref{decreasing}, $f= a_0 + \sum_{i=1}^{n} (a_{i} - a_{i-1})\chi_{U_{i}}$ and $g = a_0 + \sum_{i=1}^{n} (a_{i} - a_{i-1}) \chi_{V_{i}}$. Since $U_i\prec\zeta(e_i)\prec V_i$ for each $i$, we see that $f \prec_X h \prec_X g$. Thus, $\eta[B]$ is dense in $FN(X)$.

It remains to show that $s \prec t$ in $S$ iff $\eta(s) \prec_X \eta(t)$. For this, we need the following claim.

\begin{claim}\label{claim}
Let $s \in S$ and set $f = \eta(s)$. For each $a \in A$, we have $f^{-1}({\uparrow}a) \in \zeta[B]$.
\end{claim}

\begin{proof}[Proof of Claim:]
Write $s = a_0 + \sum_{i=1}^{n} (a_i - a_{i-1}) e_{i}$ in decreasing form. Then $f = a_0 + \sum_{i=1}^{n} (a_i - a_{i-1}) \chi_{\zeta(e_{i})}$. Let $a \in A$. Then $f^{-1}({\uparrow}a)$ is either empty or equal to $f^{-1}({\uparrow}a_i)$ for some $i$. As $f^{-1}({\uparrow}a_i) = \zeta(e_i) \in \zeta[B]$, the claim is proved.
\end{proof}

Now, let $s,t\in S$ and set $f = \eta(s)$ and $g = \eta(t)$. Suppose $a_0 < \cdots < a_n$ contain all the values of $f$ and $g$. Set $U_i = f^{-1}(\up a_i)$ and $V_i = g^{-1}(\up a_i)$. By Claim~\ref{claim}, $U_i, V_i \in \zeta[B]$. Write $U_i = \zeta(e_i)$ and $V_i = \zeta(k_i)$ with $e_i, k_i \in B$.

First suppose that $s \prec t$. Then $[(s-a_i) \vee 0] \wedge (a_i - a_{i-1}) \prec [(t-a_i) \vee 0] \wedge (a_i - a_{i-1})$. We have $\eta\left([(s-a_i) \vee 0] \wedge (a_i - a_{i-1})\right) = [(f-a_i) \vee 0] \wedge (a_i - a_{i-1})$ and $\eta\left([(t-a_i) \vee 0] \wedge (a_i - a_{i-1}) \right) =[(g-a_i) \vee 0] \wedge (a_i - a_{i-1})$.
It is easy to see that
\[
[(f-a_i) \vee 0] \wedge (a_i - a_{i-1}) = (a_i - a_{i-1})\chi_{U_{i}}
\]
and
\[
[(g-a_i) \vee 0] \wedge (a_i - a_{i-1}) = (a_i - a_{i-1})\chi_{V_{i}}.
\]
Because $\eta$ is 1-1,
\[
[(s-a_i) \vee 0] \wedge (a_i - a_{i-1}) = (a_i - a_{i-1})e_{i}
\]
and
\[
[(t-a_i) \vee 0] \wedge (a_i - a_{i-1}) = (a_i - a_{i-1}) k_{i}.
\]
Since the first is proximal to the second, we get $e_{i} \prec k_{i}$, and as $\zeta$ preserves proximity \cite[Ch.~I.3]{deV62}, $U_{i} \prec V_{i}$. Because this is true for each $i$, we conclude that $f \prec_X g$.

Conversely, suppose that $f \prec_X g$. Then $U_i \prec V_i$ for each $i$. Since $\zeta$ reflects proximity \cite[Ch.~I.3]{deV62}, $e_i \prec k_i$ for each $i$. By Lemma~\ref{easy first}(1), Remark~\ref{decreasing}, and the injectivity of $\eta$, we may write $s = a_0 + \sum_{i=1}^{n} (a_i - a_{i-1})e_{i}$ and $t = a_0 + \sum_{i=1}^{n} (a_i - a_{i-1})k_{i}$. From this we conclude that $s \prec t$.
\end{proof}

\begin{corollary}\label{cor:5.4}
If $S$ is a proximity Specker $A$-algebra, then any two proximities on $S$ that restrict to the same proximity on $\Id(S)$ are equal.
\end{corollary}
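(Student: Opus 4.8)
The plan is to deduce this immediately from the representation theorem (Theorem~\ref{rep}), using the observation that all the data entering that construction --- the space $X$ and the embedding $\eta$ --- depend only on the restriction of the proximity to $\Id(S)$, and never on the proximity on the rest of $S$.

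In detail, suppose $\prec_1$ and $\prec_2$ are two proximities on $S$ whose restrictions to $\Id(S)$ coincide; call this common restriction $\prec_0$. First I would note that the compact Hausdorff space $X$ appearing in Theorem~\ref{rep} is built entirely from $(\Id(S),\prec_0)$ as its space of ends, so it is literally the same space whether we start from $(S,\prec_1)$ or $(S,\prec_2)$. Next, the embedding $\eta\colon S\to FN(X)$ is characterized (as in the proof of Theorem~\ref{rep}) as the unique $A$-algebra homomorphism extending the Boolean homomorphism $\sigma\colon\Id(S)\to\Id(FN(X))$ given by $\sigma(e)=\chi_{\zeta(e)}$ --- and this uniqueness comes from the Specker $A$-algebra structure of $S$ alone, via \cite[Sec.~2]{BMMO13a}, with no reference to the proximity. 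Since $\sigma$ depends only on $\zeta$, hence only on $(\Id(S),\prec_0)$, the map $\eta$ is the same in both cases.

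Finally, I would invoke the equivalence in Theorem~\ref{rep} twice: for $s,t\in S$,
\[
s\prec_1 t \iff \eta(s)\prec_X\eta(t) \iff s\prec_2 t,
\]
the outer equivalences being exactly the content of Theorem~\ref{rep} applied to $(S,\prec_1)$ and to $(S,\prec_2)$ respectively, and the middle expression being unambiguous because $X$ and $\eta$ are common to both. Hence $\prec_1$ and $\prec_2$ agree on all of $S$.

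There is essentially no serious obstacle here; the only point requiring care is the bookkeeping needed to confirm that each ingredient of Theorem~\ref{rep} --- the ends space, the map $\zeta$, the homomorphism $\sigma$, and finally $\eta$ --- is a function of $(\Id(S),\prec_0)$ and of the bare Specker structure of $S$, so that it is genuinely shared by $(S,\prec_1)$ and $(S,\prec_2)$. Once that is spelled out, the corollary is immediate.
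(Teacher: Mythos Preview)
Your proposal is correct and follows essentially the same approach as the paper: apply Theorem~\ref{rep} to each proximity, observe that the space $X$ and embedding $\eta$ depend only on $(\Id(S),\prec_0)$ and the Specker structure, and conclude via the chain $s\prec_1 t \iff \eta(s)\prec_X\eta(t)\iff s\prec_2 t$. Your explicit justification that $\eta$ is independent of the choice of proximity on $S$ is, if anything, more carefully spelled out than in the paper's own proof.
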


\begin{proof}
Let $\prec$ and $\prec'$ be two proximities on $S$ that restrict to the same proximity on $\Id(S)$. Let $X$ be the space of ends of $(\Id(S),\prec)$. By Theorem~\ref{rep}, there is an $\ell$-algebra embedding $\eta:S\rightarrow FN(X)$ such that $s \prec t$ iff $\eta(s) \prec_X \eta(t)$ for all $s,t \in S$. Another application of Theorem~\ref{rep} to $(S,\prec')$ shows that $s \prec' t$ iff $\eta(s) \prec_X \eta(t)$ for all $s,t \in S$. Thus, for all $s,t \in S$, we have $s \prec t$ iff $s \prec' t$, and hence $\prec$ and $\prec'$ are equal.
\end{proof}

\begin{corollary}\label{cor:5.5}
If $S$ is a Specker $A$-algebra, then each proximity on $\Id(S)$ extends to a unique proximity on $S$. Consequently, there is a 1-1 correspondence between proximities on $S$ and $\Id(S)$.
\end{corollary}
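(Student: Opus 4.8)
The plan is to get the uniqueness half for free from Corollary~\ref{cor:5.4}, and to obtain the existence half by rerunning the construction in the proof of Theorem~\ref{rep}, this time starting from the given proximity on $\Id(S)$ rather than from a proximity on $S$. The key point is that the dense $\ell$-algebra embedding $\eta\colon S\to FN(X)$ built in that proof depends only on the proximity on $B:=\Id(S)$: one takes $X$ to be the space of ends of $(B,\prec)$, invokes de Vries duality \cite[Ch.~I.3]{deV62} to get the Boolean embedding $\zeta\colon B\to\mathcal{RO}(X)$ with dense image that both preserves and reflects $\prec$, composes with the isomorphism $\mathcal{RO}(X)\cong\Id(FN(X))$, $U\mapsto\chi_U$, of Lemma~\ref{idempotents of FN} to obtain a Boolean embedding $\sigma\colon B\to\Id(FN(X))$, and then extends $\sigma$ to an $A$-algebra homomorphism $\eta\colon S\to FN(X)$ by the universal property of Specker algebras \cite[Sec.~2]{BMMO13a}, which is an $\ell$-algebra homomorphism by \cite[Cor.~5.3]{BMMO13a}. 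The injectivity of $\eta$ and the density of $\eta[S]$ in $FN(X)$ are proved in Theorem~\ref{rep} using only the injectivity and the density of $\zeta$, so they carry over verbatim.

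Given this embedding, I would define $\prec'$ on $S$ by $s\prec' t$ iff $\eta(s)\prec_X\eta(t)$, and claim that $\prec'$ is a proximity on $S$ restricting to $\prec$ on $\Id(S)$. The restriction claim is immediate: for $e,k\in B$ we have $\eta(e)=\chi_{\zeta(e)}$ and $\eta(k)=\chi_{\zeta(k)}$, and a one-line check from the definition of $\prec_X$ shows $\chi_U\prec_X\chi_V$ iff $U\prec V$ in $\mathcal{RO}(X)$ (the only nontrivial case of $\chi_U^{-1}(\up a)\prec\chi_V^{-1}(\up a)$ is $0<a\le 1$); hence $e\prec' k$ iff $\zeta(e)\prec\zeta(k)$ iff $e\prec k$, the last equivalence because $\zeta$ reflects proximity. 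For the axioms, I would observe that an injective $\ell$-algebra homomorphism preserves $0,1,+,-,\wedge,\vee$, multiplication, and scalar multiplication, and both preserves and reflects $\le$; hence each of (P1)--(P8) for $\prec'$ transports directly from the corresponding axiom for $\prec_X$, which holds by Theorem~\ref{FN algebra}. Axiom (P9) follows from the density of $\eta[S]$ in $FN(X)$: from $\eta(s)\prec_X\eta(t)$ density supplies $r\in S$ with $\eta(s)\prec_X\eta(r)\prec_X\eta(t)$, i.e.\ $s\prec' r\prec' t$. For (P10), given $0<s$ we have $0<\eta(s)$, so (P10) in $FN(X)$ yields $0<h\prec_X\eta(s)$, and then (P9) together with density in $FN(X)$ yields $r\in S$ with $h\prec_X\eta(r)\prec_X\eta(s)$; since $0<h\le\eta(r)$ we get $0<r$ and $r\prec' s$.

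For the asserted $1$-$1$ correspondence, I would consider the map sending a proximity on $S$ to its restriction to $\Id(S)$: this is well defined by Proposition~\ref{prop:4.6}, surjective by the existence statement just proved, and injective by Corollary~\ref{cor:5.4}, hence a bijection. I expect the only genuinely delicate point to be axiom (P10): it is the one axiom that cannot simply be pulled back along $\eta$, because one must pass from an interpolant $h\in FN(X)$ back into $\eta[S]$ while preserving strict positivity, which is exactly where density of $\eta[S]$ in $FN(X)$ is used in concert with (P9) and (P10) for $\prec_X$. Everything else is a routine transfer of structure along the embedding $\eta$, or a direct appeal to Corollary~\ref{cor:5.4}.
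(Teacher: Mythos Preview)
Your proposal is correct and follows essentially the same approach as the paper: build $\eta$ from the proximity on $\Id(S)$ alone, pull back $\prec_X$ along $\eta$, and get uniqueness from Corollary~\ref{cor:5.4}. The only notable difference is in (P10): the paper argues directly in $S$ by writing $s=\sum a_ie_i$ in orthogonal form, picking $a_i>0$, and using (DV7) on $\Id(S)$ to find $0\ne k\prec e_i$ so that $0<a_ik\prec s$, whereas you route through (P10) for $\prec_X$ and density---both arguments are valid, and you are more explicit than the paper in verifying that the new relation restricts to the given $\prec$ on $\Id(S)$.
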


\begin{proof}
Let $X$ be the space of ends of $(\Id(S),\prec)$. As we saw in the proof of Theorem~\ref{rep}, there is an $\ell$-algebra embedding $\eta : S \to FN(X)$. For $s,t\in S$, define $s\prec t$ iff $\eta(s)\prec_X\eta(t)$. By Theorem~\ref{FN algebra}, $\prec_X$ is a proximity on $FN(X)$.  Therefore, $\prec$ satisfies (P1) through (P8). For (P9), let $s,t \in S$ with $s \prec t$. Set $f=\eta(s)$ and $g=\eta(t)$. Then $f \prec_X g$. Since $\eta[S]$ is dense in $FN(X)$, there is $r\in S$ such that $f \prec_X \eta(r) \prec_X g$. This implies $s\prec r\prec t$, as required. For (P10), let $0 < s$. Write $s = \sum_{i=1}^n a_i e_i$ in orthogonal form. Since $s>0$, some $a_i>0$. As $e_i \ne 0$, there is $k \in \Id(S)$ with $0 \ne k \prec e_i$. Because $S$ is torsion-free, $0 < a_i k \in S$ and $a_i k \prec a_i e_i \le s$. Thus, $0 < a_i k \prec s$. Consequently, $\prec$ is a proximity on $S$, and it follows from Corollary~\ref{cor:5.4} that it is the unique proximity extending $\prec$ on $\Id(S)$.
\end{proof}

\begin{corollary}\label{FN algebra char}
Let $(S,\prec)$ be a proximity $f$-algebra over $A$. Then there is an $\ell$-algebra isomorphism between $S$ and $FN(X)$, for some compact Hausdorff space $X$, that preserves and reflects the proximity iff $(S,\prec)$ is a proximity Baer Specker $A$-algebra.
\end{corollary}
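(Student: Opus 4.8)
The plan is to obtain the corollary by combining Theorem~\ref{FN algebra} (every de Vries power is a proximity Baer Specker $A$-algebra) with the representation theorem, Theorem~\ref{rep}; the only genuinely new ingredient will be a surjectivity argument in the Baer case.

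For the forward implication, suppose that $\varphi : S \to FN(X)$ is an $\ell$-algebra isomorphism with $s \prec t$ iff $\varphi(s) \prec_X \varphi(t)$ for all $s,t\in S$, where $X$ is compact Hausdorff. By Theorem~\ref{FN algebra}, $FN(X)$ is a Baer Specker $A$-algebra, so in particular it is torsion-free over $A$, generated as an $A$-algebra by its idempotents, and a Baer ring. Since $\varphi$ is in particular an $A$-algebra isomorphism and a ring isomorphism, each of these properties transfers to $S$; hence $S$ is a Baer Specker $A$-algebra. As $\prec$ is already assumed to be a proximity, $(S,\prec)$ is a proximity Baer Specker $A$-algebra.

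For the converse, assume $(S,\prec)$ is a proximity Baer Specker $A$-algebra and let $X$ be the space of ends of $(\Id(S),\prec)$. By Theorem~\ref{rep}, there is an $\ell$-algebra embedding $\eta : S \to FN(X)$ with dense image such that $s \prec t$ iff $\eta(s) \prec_X \eta(t)$; it remains only to prove $\eta$ is onto. Since $S$ is a Baer ring, $\Id(S)$ is a complete Boolean algebra by \cite[Thm.~4.3]{BMMO13a}, so $(\Id(S),\prec)$ is a de Vries algebra; hence, by de Vries duality, the Boolean embedding $\zeta : \Id(S) \to \mathcal{RO}(X)$ used in the proof of Theorem~\ref{rep} is in fact an isomorphism. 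Now take any $f \in FN(X)$ and write $f = a_0 + \sum_{i=1}^n (a_i-a_{i-1})\chi_{U_i}$ with $a_0 < \cdots < a_n$ in $A$ and $X = U_0 \supset U_1 \supset \cdots \supset U_n$ regular open, as in Lemma~\ref{easy first}(1) and Remark~\ref{decreasing}. Setting $e_i = \zeta^{-1}(U_i) \in \Id(S)$ gives $1 = e_0 > e_1 > \cdots > e_n$, and $s := a_0 + \sum_{i=1}^n (a_i - a_{i-1}) e_i$ satisfies $\eta(s) = a_0 + \sum_{i=1}^n (a_i - a_{i-1})\chi_{\zeta(e_i)} = f$. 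Thus $\eta$ is a bijective $\ell$-algebra homomorphism that preserves and reflects $\prec$, hence an $\ell$-algebra isomorphism with the desired property.

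The only step requiring real care is the surjectivity of $\zeta$ when $\Id(S)$ is complete, i.e.\ the fact that a de Vries algebra is isomorphic (not merely densely embedded) to the algebra of regular opens of its dual space; this is precisely the de Vries representation theorem recalled in the Introduction, applied to $(\Id(S),\prec)$. I anticipate no further obstacle: the forward direction is just a transfer of algebraic properties along an isomorphism, and everything else has been prepared by Theorems~\ref{FN algebra} and \ref{rep}.
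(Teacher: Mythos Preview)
Your proof is correct and follows essentially the same route as the paper's: the forward direction invokes Theorem~\ref{FN algebra} and transfers the Baer Specker property along the isomorphism, and the converse applies Theorem~\ref{rep}, then uses completeness of $\Id(S)$ together with de Vries duality to conclude that $\zeta:\Id(S)\to\mathcal{RO}(X)$ is onto, from which surjectivity of $\eta$ follows by writing an arbitrary $f\in FN(X)$ in decreasing form. Your write-up is slightly more explicit about the transfer step in the forward direction, but the argument is the same.
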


\begin{proof}
That $(FN(X),\prec_X)$ is a proximity Baer Specker $A$-algebra follows from Theorem~\ref{FN algebra}. Conversely, suppose that $(S,\prec)$ is a proximity Baer Specker $A$-algebra. By Theorem~\ref{rep}, there is an $\ell$-algebra embedding $\eta:S\to FN(X)$ such that $s \prec t$ iff $\eta(s) \prec_X \eta(t)$ for all $s,t\in S$. Since $S$ is a Baer Specker $A$-algebra, $\Id(S)$ is a complete Boolean algebra \cite[Thm.~4.3]{BMMO13a}. Therefore, $(\Id(S),\prec)$ is a de Vries algebra, hence $\Id(S)$ is isomorphic to $\mathcal{RO}(X)$ \cite[Ch.~I.4]{deV62}. Thus, each $f\in FN(X)$ can be written in decreasing form $f= a_0 + \sum_{i=1}^{n} (a_i - a_{i-1})\chi_{\zeta(e_{i})}$, and setting $s= a_0 + \sum_{i=1}^{n} (a_i - a_{i-1})e_{i}$ yields $s\in S$ such that $\eta(s)=f$. Consequently, $\eta$ is an $\ell$-algebra isomorphism such that $s \prec t$ iff $\eta(s) \prec_X \eta(t)$ for all $s,t\in S$.
\end{proof}

\section{Proximity morphisms and continuous maps}

In this section we show that a continuous map between compact Hausdorff spaces gives rise to what we term a proximity morphism between the corresponding proximity Specker $A$-algebras. We also characterize proximity morphisms between proximity Specker $A$-algebras $(S,\prec)$ and $(T,\prec)$ by means of de Vries morphisms from $(\Id(S),\prec)$ to $(\Id(T),\prec)$, and by means of continuous maps between the corresponding dual compact Hausdorff spaces.

Let $\varphi : X \to Y$ be a continuous map between compact Hausdorff spaces. By de Vries duality \cite{deV62}, $\widehat{\varphi} : \mathcal{RO}(Y) \to \mathcal{RO}(X)$, given by $\widehat{\varphi}(U) = {\sf Int}\left({\sf Cl}(\varphi^{-1}(U))\right)$, is a de Vries morphism.

Define $\varphi^+: F(Y) \to F(X)$ by $\varphi^+(f) = f\circ \varphi$.
\[
\xymatrix{
X \ar[r]^{\varphi} \ar[dr]_{f \circ \varphi} & Y \ar[d]^f \\
& A
}
\]
It is straightforward to see that $\varphi^+$ is an $A$-algebra homomorphism. Next define $\varphi^*: FN(Y) \to FN(X)$ by $\varphi^*(f) = (\varphi^+(f))^\#=(f\circ \varphi)^\#$. By the definition of normalization,
\[
\varphi^*(f)^{-1}(\up a) = ((f \circ \varphi)^\#)^{-1}({\uparrow}a) = {\sf Int}\left({\sf Cl}\left(\varphi^{-1}(f^{-1}({\uparrow}a))\right)\right) = \widehat{\varphi}(f^{-1}({\uparrow}a)).
\]

In order to see what properties $\varphi^*$ satisfies, we require two lemmas. The following is the proximity Specker analogue of a well-known fact about proximity Boolean algebras.

\begin{lemma}\label{lem:6.1}
Let $(S,\prec)$ be a proximity Specker $A$-algebra. Then each $s\in S$ is the least upper bound of $\{t\in S:t\prec s\}$.
\end{lemma}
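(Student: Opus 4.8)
The plan is to reduce the statement, via the representation theorem, to a concrete assertion about finitely valued normal functions, and then to verify that assertion by exhibiting enough proximal approximants below a given element.

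First I would invoke Theorem~\ref{rep} to identify $S$ with a dense $\ell$-subalgebra of $(FN(X),\prec_X)$ on which $\prec$ is the restriction of $\prec_X$, where $X$ is the space of ends of $(\Id(S),\prec)$. Since an $\ell$-algebra embedding is an order embedding, it suffices to locate the least upper bound of $\{t\in S:t\prec s\}$ inside $FN(X)$; and density lets me enlarge this set to $\{g\in FN(X):g\prec_X s\}$ without changing its upper bounds, because any $g\prec_X s$ satisfies $g\prec_X t\prec_X s$ for some $t\in S$, hence $g\le t$ by Theorem~\ref{prox relation}(1). Thus the task reduces to the purely topological claim that every $f\in FN(X)$ is the least upper bound in $FN(X)$ of $\{g\in FN(X):g\prec_X f\}$.

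To prove that claim, $f$ is an upper bound by Theorem~\ref{prox relation}(1), so I would take an arbitrary upper bound $h\in FN(X)$ and show $f\le h$. Writing $a_0<\cdots<a_n$ for the values of $f$ and $U_i=f^{-1}(\up a_i)$ (a decreasing chain of regular opens), it is enough, by Lemma~\ref{refine}, to prove $U_i\subseteq h^{-1}(\up a_i)$ for each $i$. For $i\ge 1$ and each regular open $V$ with $V\prec U_i$, I would test $h$ against the two-valued function $g_V$ equal to $a_0$ off $V$ and to $a_i$ on $V$: it lies in $FN(X)$ by Remark~\ref{decreasing}, and checking the defining inclusions is routine (the only one needing comment is $V=g_V^{-1}(\up a)\prec f^{-1}(\up a)=U_j$ for $a_0<a\le a_i$, where $j\le i$ so $\cl(V)\subseteq U_i\subseteq U_j$), so $g_V\prec_X f$, hence $g_V\le h$ and therefore $V=g_V^{-1}(\up a_i)\subseteq h^{-1}(\up a_i)$. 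Since a compact Hausdorff space is regular, $U_i$ is the union of all such $V$, giving $U_i\subseteq h^{-1}(\up a_i)$. The case $i=0$ I would handle separately: the constant $a_0$ lies in $FC(X)$, so $a_0\prec_X a_0$ by Theorem~\ref{prox relation}(6) and hence $a_0\prec_X f$ by Theorem~\ref{prox relation}(2), which forces $a_0\le h$, i.e.\ $U_0=X\subseteq h^{-1}(\up a_0)$. A short argument with $\up a\supseteq\up a_i$ then upgrades $f^{-1}(\up a)\subseteq h^{-1}(\up a)$ to all $a\in A$, and Lemma~\ref{refine} finishes it.

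I expect the main obstacle to be exactly the middle step — producing enough elements proximal below $f$ to pin $h$ down on every level set $U_i$. This is where the regularity of $X$ (equivalently: in the de Vries algebra $\mathcal{RO}(X)$ every element is the join of the regular opens way-below it) has to be used, and the role of the auxiliary two-valued functions $g_V$ is precisely to convert that topological fact into the order inequality $g_V\le h$. A fully internal alternative is available: write $s=a_0+\sum b_ik_i$ in decreasing form, use Proposition~\ref{prop:4.6} to treat $(\Id(S),\prec)$ as a proximity Boolean algebra, and approximate each $k_i$ from below by idempotents $e_i\prec k_i$ chosen decreasing, so that $a_0+\sum b_ie_i\prec s$ by axioms (P6) and (P7) of Definition~\ref{proximity definition}; but this still leaves one to show these approximants have least upper bound $s$, which is the same content, so routing through the representation seems cleanest.
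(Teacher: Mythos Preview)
Your argument is correct. Both your proof and the paper's route through the representation $\eta:S\to FN(X)$ of Theorem~\ref{rep}, but from there they diverge. The paper first isolates the fact that a join of idempotents computed in $\Id(S)$ remains the join in all of $S$ (proved via $\eta$), then writes $s=a_0+\sum b_ie_i$ in decreasing form and invokes Nakano's distributivity results to conclude
\[
s=a_0+\sum_i b_i\Big(\bigvee_{k\prec e_i}k\Big)=\bigvee\Big\{a_0+\sum_i b_ik_i:\ k_i\prec e_i\Big\},
\]
each summand on the right being $\prec s$. Your approach instead stays inside $FN(X)$: you use density of $\eta[S]$ to replace $\{t\in S:t\prec s\}$ by $\{g\in FN(X):g\prec_X f\}$, and then squeeze any upper bound $h$ level-set by level-set using the two-valued test functions $g_V=a_0+(a_i-a_0)\chi_V$ together with the regularity of $X$. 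What your route buys is self-containment (no appeal to \cite{Nak50}) and a concrete picture of the approximants; what the paper's route buys is the reusable structural lemma that $\Id(S)$-joins are $S$-joins, and an explicit family $\{a_0+\sum b_ik_i\}$ of elements of $S$ whose supremum is $s$, rather than approximants living only in $FN(X)$.
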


\begin{proof}
We first show that if $E$ is a subset of $\Id(S)$ and $e \in \Id(S)$ is the join of $E$ in $\Id(S)$, then $e$ is the join of $E$ in $S$. For, since the partial order on $S$ restricts to the usual Boolean order on $\Id(S)$ (Lemma~\ref{lem:3.9}(2)), $e \in S$ is an upper bound of $E$ in $S$. Suppose $s \in S$ is another upper bound of $E$. As $s\wedge e$ is also an upper bound of $E$ in $S$, without loss of generality we may assume that $s \le e$. Then, for each $k \in E$, we have $k \le s \le e$. Consider the map $\eta : S \to FN(X)$ of Theorem~\ref{rep}. We have $\eta(k) \le \eta(s) \le \eta(e)$ and $\eta(k), \eta(e)$ are idempotents in $FN(X)$. By Lemma~\ref{idempotents of FN}, there exist regular open sets $U$ and $V$ such that $\eta(k) = \chi_U$ and $\eta(e) = \chi_V$. Therefore, $\eta(s)$ must be the characteristic function of some subset between $U$ and $V$. By normality and Remark~\ref{immediate facts}(2), it must be the characteristic function of a regular open set, and so $\eta(s)$ is an idempotent in $FN(X)$. As $\eta$ is an $\ell$-algebra embedding, $s$ is an idempotent in $S$. Thus, $s =e$, and so $e$ is the join of $E$ in $S$.

Next let $s \in S$ and write $s = a_0 + \sum_{i=1}^n b_i e_i$ in decreasing form with $b_1,\dots,b_n > 0$. It is clear that $s$ is an upper bound of $\{ t \in S : t \prec s \}$. If $E_i$ is the set of idempotents $k_i$ with $k_i \prec e_i$, then by the argument above, $e_i$ is the join of $E_i$ in $S$. By \cite[\S2, Thms.~2.3 and 2.6]{Nak50},
\[
s = a_0 + \sum_{i=1}^n b_i e_i = a_0 + \sum_{i=1}^n b_i\left(\bigvee E_i\right) = \bigvee\{ a_0 + \sum_{i=1}^n b_ik_i : k_i \in E_i\}.
\]
Since $a_0 + \sum_{i=1}^n b_ik_i \prec s$, it follows that $s=\bigvee\{ t \in S : t \prec s\}$.
\end{proof}

\begin{lemma}\label{lem:6.3}
Let $\varphi:X\to Y$ be continuous, $f\in FN(Y)$, and $a_0<\cdots<a_n$ be the values of $f$. Write $f=a_0+\sum_{i=1}^{n}(a_{i}-a_{i-1})\chi_{U_{i}}$ in decreasing form, where $U_i=f^{-1}(\up a_i)$ are regular open. Then $\varphi^*(f)=a_0+\sum_{i=1}^{n}(a_{i}-a_{i-1})\varphi^*(\chi_{U_{i}})$.
\end{lemma}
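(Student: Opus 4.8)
The plan is to establish the identity by computing the upper preimages $(\,\cdot\,)^{-1}(\up a)$ of both sides and then invoking Lemma~\ref{refine}; this is the natural route because we cannot yet assume $\varphi^*$ is a homomorphism. First I would record the basic fact, already displayed just before the lemma, that $\varphi^*(h)^{-1}(\up a)=\widehat{\varphi}\bigl(h^{-1}(\up a)\bigr)$ for every $h\in FN(Y)$ and $a\in A$. Applying this to $h=\chi_{U_i}$ (or, equivalently, using Remark~\ref{immediate facts}(2) together with the definition of $\widehat{\varphi}$) gives $\varphi^*(\chi_{U_i})=\chi_{\widehat{\varphi}(U_i)}$; in particular each $\widehat{\varphi}(U_i)$ is regular open.

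Next I would use that $\widehat{\varphi}$ is a de Vries morphism, hence order preserving by (M2), to deduce from $X=U_0\supseteq U_1\supseteq\cdots\supseteq U_n$ (with $U_0=f^{-1}(\up a_0)=Y$) that $X=\widehat{\varphi}(U_0)\supseteq\widehat{\varphi}(U_1)\supseteq\cdots\supseteq\widehat{\varphi}(U_n)$, noting $\widehat{\varphi}(Y)={\sf Int}\bigl({\sf Cl}(\varphi^{-1}(Y))\bigr)={\sf Int}\bigl({\sf Cl}(X)\bigr)=X$. Setting $g:=a_0+\sum_{i=1}^n(a_i-a_{i-1})\chi_{\widehat{\varphi}(U_i)}$ as a pointwise sum in $F(X)$, Remark~\ref{decreasing} shows $g\in FN(X)$, and a direct check shows $g^{-1}(\up a_j)=\widehat{\varphi}(U_j)$ for each $j=0,\dots,n$: for $x\in X$ one has $g(x)=a_i$ with $i$ the largest index satisfying $x\in\widehat{\varphi}(U_i)$, and since the chain is decreasing this gives $g(x)\ge a_j$ iff $x\in\widehat{\varphi}(U_j)$ (the value $a_j$ need not be attained, but the formula still holds). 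By the last paragraph of Remark~\ref{special case}, this pointwise sum coincides with the sum computed in $FN(X)$, so $g=a_0+\sum_{i=1}^n(a_i-a_{i-1})\varphi^*(\chi_{U_i})$, which is exactly the right-hand side of the lemma.

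Finally I would compare $g$ with $\varphi^*(f)$: by the displayed formula, $\varphi^*(f)^{-1}(\up a_j)=\widehat{\varphi}\bigl(f^{-1}(\up a_j)\bigr)=\widehat{\varphi}(U_j)=g^{-1}(\up a_j)$ for every $j$. Since both $\varphi^*(f)$ and $g$ take values among $a_0<\cdots<a_n$, Lemma~\ref{refine} forces $\varphi^*(f)=g$, which is the claimed equality.

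I expect the only real friction to be bookkeeping rather than a genuine obstacle: the chain $\widehat{\varphi}(U_0)\supseteq\cdots\supseteq\widehat{\varphi}(U_n)$ may fail to be strictly decreasing, so that Remark~\ref{decreasing} and the cited paragraph of Remark~\ref{special case} (phrased for strictly decreasing chains) must be applied after deleting repetitions and merging the corresponding coefficients; this changes nothing of substance. Everything else is a routine application of the characterization of normalization via $(\,\cdot\,)^{-1}(\up a)$ and of Lemma~\ref{refine}.
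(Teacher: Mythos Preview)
Your proof is correct and arrives at the same intermediate identification $\varphi^*(f)=a_0+\sum_{i=1}^n(a_i-a_{i-1})\chi_{\widehat{\varphi}(U_i)}$ as the paper, so the two arguments are essentially equivalent. The paper's route is a bit shorter: rather than computing upper preimages on both sides and invoking Lemma~\ref{refine}, it simply observes that $\varphi^+(f)=f\circ\varphi=a_0+\sum_{i=1}^n(a_i-a_{i-1})\chi_{\varphi^{-1}(U_i)}$ in $F(X)$ (an immediate consequence of Lemma~\ref{easy first}(1)), then applies Remark~\ref{immediate facts}(2) to normalize term by term, yielding $\varphi^*(f)=a_0+\sum_{i=1}^n(a_i-a_{i-1})\chi_{\widehat{\varphi}(U_i)}$ in one step. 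Your detour through Lemma~\ref{refine} works but is unnecessary once one notices that Remark~\ref{immediate facts}(2) already computes the normalization of a decreasing-form sum. The bookkeeping issue you flag (the $\widehat{\varphi}(U_i)$ need not be strictly decreasing) is present in both arguments and is harmless for the same reason you give.
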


\begin{proof}
By Lemma~\ref{easy first}(1), $f(\varphi(x))=a_i$ iff $x\in\varphi^{-1}(U_i)-\varphi^{-1}(U_{i+1})$ for each $x\in X$. Therefore, in $F(X)$ we have $\varphi^+(f)=f\circ\varphi=a_0 +\sum_{i=1}^{n}(a_{i}-a_{i-1})\chi_{\varphi^{-1}(U_{i})}$. Thus, by Remark~\ref{immediate facts}(2), we have $\varphi^*(f)=(f\circ\varphi)^\# = a_0 +\sum_{i=1}^{n}(a_{i}-a_{i-1})\chi_{\widehat{\varphi}(U_{i})}$. Since $\varphi^*(\chi_{U_i})=\widehat{\varphi}(U_i)$, we conclude that $\varphi^*(f) = a_0 +\sum_{i=1}^{n}(a_{i}-a_{i-1})\varphi^*(\chi_{U_{i}})$.
\end{proof}

The next proposition will motivate the definition of a proximity morphism.

\begin{proposition} \label{induced map on FN}
Let $\varphi : X \to Y$ be a continuous map between compact Hausdorff spaces. Then $\varphi^*:FN(Y)\to FN(X)$ satisfies the following properties for each $f,g\in FN(Y)$ and $a\in A$.
\begin{enumerate}
\item $\varphi^*(0) = 0$.
\item $\varphi^*(f \wedge g) = \varphi^*(f) \wedge \varphi^*(g)$.
\item $f \prec_Y g$ implies $-\varphi^*(-f) \prec_X \varphi^*(g)$.
\item $\varphi^*(f)$ is the least upper bound of $\{\varphi^*(g) : g \prec_Y f\}$.
\item $\varphi^*(f + a) = \varphi^*(f) + a$.
\item If $a$ is positive, then $\varphi^*(af) = a\varphi^*(f)$.
\item $\varphi^*(f \vee a) = \varphi^*(f) \vee a$.
\end{enumerate}
\end{proposition}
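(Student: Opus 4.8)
The plan is to verify the seven properties of $\varphi^*$ by reducing each to facts already established about de Vries morphisms, normalization, and the algebraic structure of $FN(X)$. The key computational tool is the identity $\varphi^*(f)^{-1}(\up a) = \widehat{\varphi}(f^{-1}(\up a))$ noted just before Lemma~\ref{lem:6.1}, together with Lemma~\ref{lem:6.3}, which expresses $\varphi^*(f)$ in decreasing form as $a_0 + \sum_i (a_i - a_{i-1})\varphi^*(\chi_{U_i})$ where the idempotents $\chi_{U_i}$ track the level sets of $f$. I would open by recording these two facts and the observation that $\widehat{\varphi}$ is a de Vries morphism (hence satisfies (M1)--(M4)).

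Then I would dispatch the items in a convenient order. Item (1) is immediate from $0 \circ \varphi = 0$ and $0^\# = 0$. For item (2), since $\wedge$ on $FN$ is the pointwise meet (Proposition~\ref{prop:2.10}), we have $(f\wedge g)\circ\varphi = (f\circ\varphi)\wedge(g\circ\varphi)$ pointwise; normalizing and using that $\inf$ of normal functions is already normal gives the claim, or equivalently one checks level sets: $(f\wedge g)^{-1}(\up a) = f^{-1}(\up a)\cap g^{-1}(\up a)$ and $\widehat{\varphi}$ preserves binary meets by (M2). For items (5), (6), (7), the cleanest route is Remark~\ref{special case}: normalization commutes with adding a constant, with multiplication by a nonnegative scalar, and with joining a constant; combining this with the fact that $\varphi^+ = (-)\circ\varphi$ is a genuine (pointwise) algebra homomorphism on $F(Y)\to F(X)$ and with Lemma~\ref{lem:6.3} handles all three at once. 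Concretely, $\varphi^*(f+a) = ((f+a)\circ\varphi)^\# = (f\circ\varphi + a)^\# = (f\circ\varphi)^\# + a = \varphi^*(f)+a$, and similarly for the other two, using that $a$ (a constant) is unaffected by $\circ\varphi$.

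Item (3) is the heart of the proof and where I expect the main obstacle. Using the level-set identity, $f\prec_Y g$ means $f^{-1}(\up a)\prec g^{-1}(\up a)$ in $\mathcal{RO}(Y)$ for all $a$, and we must show $(-\varphi^*(-f))^{-1}(\up a)\prec \varphi^*(g)^{-1}(\up a) = \widehat{\varphi}(g^{-1}(\up a))$ in $\mathcal{RO}(X)$. By Lemma~\ref{AlgebraicPropertiesOfFN}(5) applied in $FN(X)$, $(-\varphi^*(-f))^{-1}(\up a) = \lnot\, \varphi^*(-f)^{-1}(\up b)$ for the appropriate $b$, and $\varphi^*(-f)^{-1}(\up b) = \widehat{\varphi}((-f)^{-1}(\up b)) = \widehat{\varphi}(\lnot f^{-1}(\up c))$ for the appropriate $c$, so the left side becomes $\lnot\widehat{\varphi}(\lnot f^{-1}(\up c))$. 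Matching indices carefully (the $b$'s and $c$'s coming from Lemma~\ref{AlgebraicPropertiesOfFN}(5) applied twice), one sees the claim reduces exactly to the de Vries morphism axiom (M3): $U\prec V$ implies $\lnot\widehat{\varphi}(\lnot U)\prec\widehat{\varphi}(V)$, applied with $U = f^{-1}(\up a)$ and $V = g^{-1}(\up a)$. The bookkeeping of which smallest value of $f$ appears where is the fiddly part, but because $f$ and $g$ share a common finite list of values we can fix a common refinement $a_0<\cdots<a_n$ and argue index by index, where everything lines up.

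Finally, item (4): by Lemma~\ref{lem:6.1}, $f = \bigvee\{g\in FN(Y): g\prec_Y f\}$, and $\varphi^*$ preserves this join. The efficient argument is to transport the statement through the level-set identity to $\widehat{\varphi}$, which satisfies (M4): $\widehat{\varphi}(U)$ is the least upper bound of $\{\widehat{\varphi}(V): V\prec U\}$. Writing $f = a_0+\sum_i(a_i-a_{i-1})\chi_{U_i}$ in decreasing form and using Lemma~\ref{lem:6.3}, together with the density of $\zeta[B]$-style approximation arguments as in the proof of Theorem~\ref{prox relation}(5) to produce $g\prec_Y f$ whose level sets approximate the $U_i$ from below, one shows $\varphi^*(f) = \bigvee\{\varphi^*(g): g\prec_Y f\}$ in $FN(X)$; the join here is the one in $FN(X)$, computed levelwise as the join in $\mathcal{RO}(X)$, so the reduction to (M4) is exact. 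I would close by noting that (4) is where one must be careful that the supremum is taken in $FN(X)$ (normalized join), not pointwise, and that Remark~\ref{immediate facts}(2) guarantees the relevant suprema stay inside $FN(X)$.
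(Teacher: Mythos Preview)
Your proposal is correct and follows essentially the same strategy as the paper: reduce each item to the corresponding de Vries morphism axiom for $\widehat{\varphi}$ via the level-set identity $\varphi^*(f)^{-1}(\up a)=\widehat{\varphi}(f^{-1}(\up a))$, and handle (5)--(7) via Remark~\ref{special case}. The only tactical differences are in (3) and (4). For (3), you compute $(-\varphi^*(-f))^{-1}(\up a)$ by two applications of Lemma~\ref{AlgebraicPropertiesOfFN}(5); the paper instead writes $f$ and $-f$ in decreasing form and invokes Lemma~\ref{lem:6.3}, which makes the index bookkeeping you flag as ``fiddly'' completely explicit. For (4), the paper's argument is slicker than your decreasing-form-plus-density sketch: after reducing to $f\ge 0$ by translation, for each $a>0$ and each $U\prec f^{-1}(\up a)$ the single-level function $h=a\chi_U$ satisfies $h\prec_Y f$ and $h^{-1}(\up a)=U$, so the set $\{g^{-1}(\up a):g\prec_Y f\}$ already contains every $U\prec f^{-1}(\up a)$, and (M4) applies directly without any need to build decreasing chains of regular opens.
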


\begin{proof}
(1) $\varphi^*(0) = (\varphi^+(0))^\# = 0^\# = 0$ since $0$ is continuous, hence normal. Note that the same argument shows $\varphi^*(a) = a$ for each $a \in A$.

(2) Let $f, g \in FN(Y)$. Recalling that meet in $FN(Y)$ is pointwise, we see that $\varphi^*(f \wedge g) = ((f \wedge g)\circ \varphi)^\# = ((f\circ \varphi) \wedge (g \circ \varphi))^\#$ and $\varphi^*(f) \wedge \varphi^*(g) = (f\circ \varphi)^\# \wedge (g \circ \varphi)^\#$. Therefore, it suffices to prove that $(h \wedge k)^\#=h^\# \wedge k^\#$ for each $h,k \in F(Y)$. Let $a \in A$. Then
\begin{eqnarray*}
(h^\# \wedge k^\#)^{-1}({\uparrow}a) &=& (h^\#)^{-1}({\uparrow}a) \cap (k^\#)^{-1}({\uparrow}a) \\
&=& {\sf Int}\left({\sf Cl}\left(h^{-1}({\uparrow}a)\right)\right) \cap {\sf Int}\left({\sf Cl}\left(k^{-1}({\uparrow}a)\right)\right) \\
&=& {\sf Int}\left({\sf Cl}\left(h^{-1}({\uparrow}a) \cap k^{-1}({\uparrow}a)\right)\right) \\
&=& {\sf Int}\left({\sf Cl}(h \wedge k)^{-1}({\uparrow}a)\right) \\
&=& ((h \wedge k)^\#)^{-1}\left({\uparrow}a\right).
\end{eqnarray*}
Thus, $\varphi^*(f \wedge g) = \varphi^*(f) \wedge \varphi^*(g)$.

(3) Let $f, g \in FN(Y)$ with $f \prec_Y g$. Suppose the values of $f$ and $g$ are among $a_0 < \cdots < a_n$. Then $f^{-1}(\up a_i) \prec g^{-1}(\up a_i)$ for each $i$. Therefore, since $\widehat{\varphi}$ is a de Vries morphism, by (M3), $\lnot\widehat{\varphi}\left(\lnot f^{-1}(\up a_i)\right)\prec\widehat{\varphi}\left(g^{-1}(\up a_i)\right)$ for each $i$. By Lemma~\ref{easy first}(1) and Remark~\ref{decreasing}, write $f=a_0 +\sum_{i=1}^{n}(a_i - a_{i-1})\chi_{U_{i}}$ and $g=a_0 + \sum_{i=1}^{n}(a_i - a_{i-1})\chi_{V_{i}}$ in decreasing form, where $U_i=f^{-1}(\up a_i)$ and $V_i=g^{-1}(\up a_i)$ are regular open and the sums are pointwise. Set $b_{i}=a_i - a_{i-1}$ and $b=\sum_{i=1}^{n} b_{i}$. We have
\begin{eqnarray*}
-f &=& -a_0 + \sum_{i=1}^{n} -b_{i}\chi_{U_{i}} = -a_0+b-b+\sum_{i=1}^{n} -b_{i}\chi_{U_{i}} \\
&=& -a_0 -b + \sum_{i=1}^{n} b_{i}(1-\chi_{U_{i}}) = -(a_0 + b)+\sum_{i=1}^{n} b_{i}\chi_{\lnot U_{i}}.
\end{eqnarray*}
This writes $-f$ in decreasing form since $\lnot U_n\supseteq\cdots\supseteq\lnot U_1$. Therefore, by Lemma~\ref{lem:6.3}, $\varphi^*(-f)=-(a_0 + b)+\sum_{i=1}^{n} b_{i}\varphi^*(\chi_{\lnot U_{i}})$. Thus,
\begin{eqnarray*}
-\varphi^*(-f) &=& (a_0+b)+\sum_{i=1}^{n} -b_{i}\varphi^*(\chi_{\lnot U_{i}}) \\
&=& (a_0+b)-b+b+\sum_{i=1}^{n} -b_{i}\varphi^*(\chi_{\lnot U_{i}}) \\
&=& a_0+\sum_{i=1}^{n} b_{i}(1-\varphi^*(\chi_{\lnot U_{i}})) \\
&=& a_0+\sum_{i=1}^{n} b_{i}(\lnot\varphi^*(\chi_{\lnot U_{i}})).
\end{eqnarray*}
Since $\lnot\widehat{\varphi}\left(\lnot U_{i}\right)\prec\widehat{\varphi}\left(V_{i}\right)$, we have $\lnot\varphi^*(\chi_{\lnot U_{i}})\prec_X\varphi^*(\chi_{V_{i}})$, yielding $-\varphi^*(-f)\prec_X\varphi^*(g)$, as required.

(4) Let $f \in FN(Y)$. We claim that $\varphi^*(f)^{-1}(\up a) = \bigvee \{ \varphi^*(g)^{-1}(\up a) : g \prec_Y f\}$. From the connection between $\varphi^*$ and $\widehat{\varphi}$, this amounts to proving $\widehat{\varphi}(f^{-1}(\up a)) = \bigvee \{ \widehat{\varphi}(g^{-1}(\up a)) : g \prec_Y f\}$. That $\widehat{\varphi}(f^{-1}(\up a))$ is an upper bound of $\{\widehat{\varphi}(g^{-1}(\up a)) : g \prec_Y f \}$ is clear. Conversely, first suppose that $0 \le f$. Then $0 \prec_Y f$. If $a \le 0$, then $\widehat{\varphi}(f^{-1}(\up a)) = X$, and as $0^{-1}(\up a) = X$, the claim is true in this case. Now suppose that $a \ge 0$. Let $U \in \mathcal{RO}(X)$ with $U \prec f^{-1}(\up a)$. Then $h := a\chi_U$ satisfies $h \prec_Y f$ and $h^{-1}(\up a) = U$. Because $\widehat{\varphi}(f)^{-1}(\up a) = \bigvee \{ \widehat{\varphi}(U) : U \prec f^{-1}(\up a) \}$, the claim holds for $f \ge 0$. For an arbitrary $f$, since $f$ is finitely valued, there is $b \in A$ with $0 \le f+b$. By Remark~\ref{special case}, $f+b$ is pointwise, so the case just done gives
\begin{eqnarray*}
f^{-1}(\up a) &=& (f+b)^{-1}(\up(a+b)) = \bigvee \{ h^{-1}(\up(a+b)) : h \prec_Y f+b\}  \\
&=& \bigvee \{ (h-b)^{-1}(\up a) : h \prec_Y f+b \} = \bigvee \{ g^{-1}(\up a) : g \prec_Y f\}.
\end{eqnarray*}
The last equality follows since $b \prec b$ for all $b \in A$ (Remark~\ref{consequences of proximity definition}).

For proving (5), (6), and (7) we use Remark~\ref{special case} which gives that addition and join by a scalar and multiplication by a positive scalar are pointwise.

(5) Let $f \in FN(Y)$ and $a \in A$. Then
\[
\varphi^*(f+a) = ((f+a)\circ \varphi)^\# = (f\circ \varphi + a)^\# = (f\circ \varphi)^\# + a = \varphi^*(f) + a.
\]

(6)  Let $f \in FN(Y)$ and $a \in A$ be positive. Then
\[
\varphi^*(af) = ((af)\circ \varphi)^\# = (a(f \circ \varphi))^\# = a(f \circ \varphi)^\# = a\varphi^*(f).
\]

(7) Let $f \in FN(Y)$ and $a \in A$. Then
\[
\varphi^*(f \vee a) = ((f \vee a) \circ \varphi)^\# = ((f \circ \varphi) \vee a)^\# = (f \circ \varphi)^\# \vee a = \varphi^*(f) \vee a.
\]
\end{proof}

Proposition~\ref{induced map on FN} motivates the following definition.

\begin{definition} \label{def:5.1}
Let $(S,\prec)$ and $(T,\prec)$ be proximity $f$-algebras over $A$. A map $\alpha : S \to T$ is a \emph{proximity morphism} provided for each $s,t\in S$ and $a\in A$, we have:
\begin{enumerate}
\item $\alpha(0) = 0$.
\item $\alpha(s \wedge t) = \alpha(s) \wedge \alpha(t)$.
\item $s \prec t$ implies $-\alpha(-s) \prec \alpha(t)$.
\item $\alpha(s)$ is the least upper bound of $\{\alpha(t) : t \prec s\}$.
\item $\alpha(s + a) = \alpha(s) + a$.
\item If $a$ is positive, then $\alpha(as) = a\alpha(s)$.
\item $\alpha(s \vee a) = \alpha(s) \vee a$.
\end{enumerate}
\end{definition}

\begin{remark} \label{notes on proximity morphisms}
\begin{enumerate}
\item[]
\item It follows from (1) and (5) that $\alpha(a) = a$ for each $a \in A$. In particular, $\alpha(1)=1$, so if $T$ is nontrivial, then $0\ne 1$ in $T$, and hence $\alpha$ is nonzero.
\item It follows from (2) that $\alpha$ is order preserving. Also, for $s \in S$ and $a \in A$, we have $\alpha(s \wedge a) = \alpha(s) \wedge \alpha(a) = \alpha(s) \wedge a$.
\end{enumerate}
\end{remark}

\begin{proposition}\label{prop:5.4}
Let $(S,\prec)$ and $(T,\prec)$ be proximity Specker $A$-algebras and let $\alpha : S \to T$ be a proximity morphism. Then $\alpha(\func{Id}(S)) \subseteq \func{Id}(T)$ and $\alpha|_{\func{Id}(S)}:\func{Id}(S)\to\func{Id}(T)$ is a de Vries morphism.
\end{proposition}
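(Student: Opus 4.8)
The plan is to first establish the containment $\alpha(\func{Id}(S))\subseteq\func{Id}(T)$ and then to check the de Vries morphism axioms (M1)--(M4) for $\alpha|_{\func{Id}(S)}$. Throughout I would use the elementary consequences of Definition~\ref{def:5.1} recorded in Remark~\ref{notes on proximity morphisms} — that $\alpha(a)=a$ for every $a\in A$ (so $\alpha(1)=1$) and that $\alpha$ is order preserving — together with Proposition~\ref{prop:4.6}, which tells us $\prec$ restricts to a proximity on both $\func{Id}(S)$ and $\func{Id}(T)$, so that (M3) and (M4) are genuinely statements about the Boolean parts.

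For the containment, fix $e\in\func{Id}(S)$. Since $2>1$ in the totally ordered domain $A$, Lemma~\ref{lem:3.9}(4) gives $2e\wedge 1=e$ in $S$. Applying $\alpha$ and using properties (2) and (6) of Definition~\ref{def:5.1} together with $\alpha(1)=1$ produces $2\alpha(e)\wedge 1=\alpha(e)$; subtracting $\alpha(e)$ and using the lattice-ordered group identity $(2x\wedge 1)-x=x\wedge(1-x)$ collapses this to $\alpha(e)\wedge(1-\alpha(e))=0$. Since $\alpha$ is order preserving and fixes $A$, and $0\le e\le 1$ by Lemma~\ref{lem:3.9}(1), we have $0\le\alpha(e)\le 1$, so $\alpha(e)$ and $1-\alpha(e)$ are disjoint nonnegative elements of the $f$-ring $T$; hence their product vanishes, i.e.\ $\alpha(e)-\alpha(e)^2=0$, and $\alpha(e)\in\func{Id}(T)$.

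Next I would verify the axioms. (M1) is property (1). For (M2), use that in a torsion-free $f$-algebra the meet of two idempotents equals their product (so $e\wedge k=ek$ lands in $\func{Id}(S)$, and likewise meets of idempotents in $T$ stay in $\func{Id}(T)$); then (M2) is exactly the instance $\alpha(e\wedge k)=\alpha(e)\wedge\alpha(k)$ of property (2). For (M3), let $e\prec k$ in $\func{Id}(S)$; property (3) gives $-\alpha(-e)\prec\alpha(k)$, and since $-e=\neg e+(-1)$ with $-1\in A$, property (5) gives $\alpha(-e)=\alpha(\neg e)-1$, whence $-\alpha(-e)=1-\alpha(\neg e)=\neg\alpha(\neg e)$, the last step using that $\alpha(\neg e)$ is idempotent in $T$; this is (M3). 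Finally, (M4) follows from property (4) after reducing axiom (4) to idempotents: given $t\prec e$ in $S$ we have $0\prec e$ (from $e\ge 0$ and axioms (P1), (P3) of Definition~\ref{proximity definition}), so $t\vee 0\prec e$ by Remark~\ref{consequences of proximity definition}(1); writing $t\vee 0=\sum_i a_ie_i$ in orthogonal form and reusing the computation from the proof of Proposition~\ref{prop:4.6} shows $0<a_i\le 1$ and $e_i\prec e$, hence $l:=\bigvee_i e_i\prec e$ and $t\le t\vee 0\le l\le e$, so $\alpha(t)\le\alpha(l)$. Thus property (4) yields $\alpha(e)=\sup\{\alpha(l):l\in\func{Id}(S),\ l\prec e\}$ computed in $T$, and since by the containment this supremum lies in $\func{Id}(T)$ while the Boolean order on $\func{Id}(T)$ is the restriction of that of $T$ (Lemma~\ref{lem:3.9}(2)), it is also the join in $\func{Id}(T)$ — which is (M4).

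I expect the only genuinely load-bearing step to be the containment $\alpha(\func{Id}(S))\subseteq\func{Id}(T)$: the trick is to feed the identity $2e\wedge 1=e$ of Lemma~\ref{lem:3.9}(4) through $\alpha$, which forces $\alpha(e)\wedge(1-\alpha(e))=0$ and hence idempotency of $\alpha(e)$. The remaining delicate point is the cofinality reduction in (M4), but that merely repackages the orthogonal-form argument already carried out in the proof of Proposition~\ref{prop:4.6}.
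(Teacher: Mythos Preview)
Your proof is correct, and the verification of (M1)--(M4) is essentially the same as the paper's (including the cofinality reduction for (M4), which both you and the paper reduce to the orthogonal-form computation from Proposition~\ref{prop:4.6}).

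The genuine point of difference is the containment $\alpha(\func{Id}(S))\subseteq\func{Id}(T)$. The paper proves this by invoking the representation theorem (Theorem~\ref{rep}): it embeds $T$ into $FN(X)$, and for each $x\in X$ uses Lemma~\ref{lem:3.9}(3) in the form $a\wedge e=ae$ (with $a=\eta_T(\alpha(e))(x)$) to force $a=a^2$ pointwise, whence $\eta_T(\alpha(e))$ is a characteristic function. Your argument instead feeds the identity $2e\wedge 1=e$ of Lemma~\ref{lem:3.9}(4) through $\alpha$, obtaining $\alpha(e)\wedge(1-\alpha(e))=0$, and then uses that in an $f$-ring disjoint nonnegative elements have zero product (immediate from the subdirect-product description) to conclude $\alpha(e)^2=\alpha(e)$. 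This is a cleaner and more elementary route: it stays entirely within the $f$-algebra axioms and avoids the dependence on Theorem~\ref{rep}, whereas the paper's pointwise argument buys some geometric intuition at the cost of importing the representation machinery.
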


\begin{proof}
If $T$ is trivial, there is nothing to verify, so assume that $T$ is nontrivial. Let $X$ be the space of ends of $\Id(T)$, and let $\eta_T:T\to FN(X)$ be the $\ell$-algebra embedding of Theorem~\ref{rep}. Suppose $e \in \func{Id}(S)$. Because $\alpha$ is order preserving with $\alpha(0) = 0$ and $\alpha(1) = 1$, we see that $0 \le \alpha(e) \le 1$. Take $x \in X$ and set $a = \eta_T(\alpha(e))(x)$. Then $0 \le a \le 1$. By Lemma~\ref{lem:3.9}(3), $a\wedge e=ae$, so
\[
a \wedge \alpha(e) = \alpha(a \wedge e) = \alpha(ae) = a\alpha(e).
\]
Therefore, $a \wedge \eta_T(\alpha(e))=a\eta_T(\alpha(e))$. Evaluating at $x$ yields $a = a^2$. Thus, as $A$ is a domain, $a = \eta_T(\alpha(e))(x) \in \{0,1\}$. This shows that $\eta_T(\alpha(e)) \in \Id(FN(X))$. Since $\eta_T$ is an $A$-algebra homomorphism, this implies that $\alpha(e) \in \Id(T)$. It follows that $\alpha|_{\func{Id}(S)}: \func{Id}(S) \to \func{Id}(T)$ is well defined. It is also clear that $\alpha|_{\func{Id}(S)}$ satisfies (M1) and (M2). Suppose that $e,k\in\func{Id}(S)$ with $e \prec k$. Then $\lnot \alpha(\lnot e)=1-\alpha(1-e)=1-[1+\alpha(-e)]=-\alpha(-e)$. Because $-\alpha(-e) \prec \alpha(k)$, we conclude that $\lnot \alpha(\lnot e) \prec \alpha(k)$. Therefore, $\alpha|_{\func{Id}(S)}$ satisfies (M3). Let $k \in \func{Id}(S)$. Then $\alpha(k)$ is the least upper bound of $\{\alpha(s) : s \in S, s \prec k\}$. Suppose that $0 \le s \prec k$. Write $s = \sum_{i=1}^n a_i e_i$ in orthogonal form with each $a_i \ne 0$. The proof of Proposition~\ref{prop:4.6} then yields $0 < a_i\le 1$ and $e_i \prec k$ for each $i$. Consequently, $s \le e_1 \vee \cdots \vee e_n \prec k$. Since $\alpha(s) \le \alpha(e_1 \vee \cdots \vee e_n)$, we see that $\alpha(k)=\bigvee\{\alpha(e) : e \in \func{Id}(S), e \prec k\}$. Thus, $\alpha|_{\func{Id}(S)}$ satisfies (M4).
\end{proof}

The next theorem, which is the main result of this section, characterizes proximity morphisms.

\begin{theorem} \label{characterization of proximity morphism}
Suppose that $(S,\prec)$ and $(T,\prec)$ are proximity Specker $A$-algebras and $\alpha:S\to T$ is a map. Let $X$ be the space of ends of $(\Id(T), \prec)$ and $Y$ be the space of ends of $(\Id(S),\prec)$. Then the following conditions are equivalent.
\begin{enumerate}
\item $\alpha$ is a proximity morphism.
\item The restriction $\alpha|_{\Id(S)}:\Id(S)\to\Id(T)$ is a well-defined de Vries morphism, and if $s = a_0 + \sum_{i=1}^n b_i e_i$ is in decreasing form with $b_1,\dots,b_n > 0$, then $\alpha(s) = a_0 + \sum_{i=1}^n b_i \alpha(e_i)$.
\item There exists a continuous map $\varphi:X\to Y$ such that the following diagram commutes.
\[
\xymatrix{
S \ar[r]^{\alpha} \ar[d]_{\eta_S} & T \ar[d]^{\eta_T} \\
FN(Y) \ar[r]_{\varphi^*} & FN(X)
}
\]
\end{enumerate}
\end{theorem}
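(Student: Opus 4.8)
The plan is to establish the cycle of implications $(1)\Rightarrow(2)\Rightarrow(3)\Rightarrow(1)$, using Proposition~\ref{prop:5.4}, the representation Theorem~\ref{rep}, Proposition~\ref{induced map on FN}, and Lemma~\ref{lem:6.3}; since all three conditions hold trivially when $T$ is trivial, we may assume $T$ is nontrivial throughout.

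For $(1)\Rightarrow(2)$ I would first invoke Proposition~\ref{prop:5.4}, which already gives that $\alpha|_{\Id(S)}$ is a well-defined de Vries morphism. For the decreasing-form identity, I would induct on the number $n$ of non-constant terms of $s=a_0+\sum_{i=1}^n b_ie_i$ (with $b_i>0$ and $1=e_0>e_1>\cdots>e_n>0$). The base case $n=0$ is $\alpha(a_0)=a_0$, which is Remark~\ref{notes on proximity morphisms}(1). For the inductive step, set $a_1=a_0+b_1$ and note that $s\wedge a_1=a_0+b_1e_1$ while $s\vee a_1=a_1+\sum_{i=2}^n b_ie_i$ is again in decreasing form, now with $n-1$ non-constant terms. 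Then Definition~\ref{def:5.1}(5),(6) give $\alpha(s\wedge a_1)=\alpha(a_0+b_1e_1)=a_0+b_1\alpha(e_1)$, while Definition~\ref{def:5.1}(7), Remark~\ref{notes on proximity morphisms}(2), and the inductive hypothesis give $\alpha(s)\wedge a_1=a_0+b_1\alpha(e_1)$ and $\alpha(s)\vee a_1=a_1+\sum_{i=2}^n b_i\alpha(e_i)$. Since $T$ is an $\ell$-ring, $\alpha(s)=(\alpha(s)\wedge a_1)+(\alpha(s)\vee a_1)-a_1$, and substituting yields $\alpha(s)=a_0+\sum_{i=1}^n b_i\alpha(e_i)$.

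For $(2)\Rightarrow(3)$, put $\sigma=\alpha|_{\Id(S)}$. Using the dense embeddings $\zeta_S\colon\Id(S)\hookrightarrow\mathcal{RO}(Y)$ and $\zeta_T\colon\Id(T)\hookrightarrow\mathcal{RO}(X)$ of Theorem~\ref{rep}, I would extend $\zeta_T\circ\sigma\circ\zeta_S^{-1}$ (defined on the dense subalgebra $\zeta_S[\Id(S)]$) to a de Vries morphism $\mathcal{RO}(Y)\to\mathcal{RO}(X)$ in the standard way, and then apply de Vries duality to produce a unique continuous $\varphi\colon X\to Y$ satisfying $\widehat\varphi(\zeta_S(e))=\zeta_T(\sigma(e))$ for all $e\in\Id(S)$. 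To verify $\eta_T\circ\alpha=\varphi^*\circ\eta_S$, take $s\in S$ in decreasing form $s=a_0+\sum_{i=1}^n b_ie_i$; then $\eta_S(s)=a_0+\sum_{i=1}^n b_i\chi_{\zeta_S(e_i)}$ is in decreasing form in $FN(Y)$, so Lemma~\ref{lem:6.3} together with $\varphi^*(\chi_U)=\chi_{\widehat\varphi(U)}$ gives $\varphi^*(\eta_S(s))=a_0+\sum_{i=1}^n b_i\chi_{\widehat\varphi(\zeta_S(e_i))}$, whereas condition~(2) and the fact that $\eta_T$ is an $A$-algebra homomorphism give $\eta_T(\alpha(s))=a_0+\sum_{i=1}^n b_i\chi_{\zeta_T(\sigma(e_i))}$; these coincide by the choice of $\varphi$. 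This implication is the crux of the argument: the difficulty is that $\Id(S)$ and $\Id(T)$ need not be complete, so the continuous map must be extracted only after passing to the completions $\mathcal{RO}(Y)$ and $\mathcal{RO}(X)$, and commutativity of the square must then be checked on all of $S$ rather than merely on idempotents, which is exactly where Lemma~\ref{lem:6.3} and the decreasing-form clause of (2) are essential.

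For $(3)\Rightarrow(1)$, I would transport each of the seven conditions of Definition~\ref{def:5.1} from $\varphi^*$ --- which satisfies them by Proposition~\ref{induced map on FN} --- to $\alpha$ across the $\ell$-algebra embeddings $\eta_S$ and $\eta_T$, using that they preserve and reflect $\prec$ (Theorem~\ref{rep}), are $A$-algebra homomorphisms, and that $\eta_S[S]$ is dense in $FN(Y)$. Conditions (1), (2), (5), (6), (7) follow by a direct diagram chase and the injectivity of $\eta_T$, and condition (3) follows similarly from Proposition~\ref{induced map on FN}(3) since $\eta_T$ reflects $\prec$. Condition (4) requires a bit more: after observing that $\alpha(s)$ is an upper bound of $\{\alpha(t):t\prec s\}$, I would, given any upper bound $u$, use density of $\eta_S[S]$ to find for each $g\prec_Y\eta_S(s)$ an element $t\in S$ with $g\prec_Y\eta_S(t)\prec_Y\eta_S(s)$, so that $t\prec s$ and $\varphi^*(g)\le\eta_T(\alpha(t))\le\eta_T(u)$; since $\eta_T(\alpha(s))=\bigvee\{\varphi^*(g):g\prec_Y\eta_S(s)\}$ by Proposition~\ref{induced map on FN}(4), this forces $\alpha(s)\le u$.
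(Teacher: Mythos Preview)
Your proof is correct and follows the same cycle $(1)\Rightarrow(2)\Rightarrow(3)\Rightarrow(1)$ as the paper; the arguments for $(2)\Rightarrow(3)$ and $(3)\Rightarrow(1)$ are essentially identical to the paper's (the paper simply cites de~Vries for the diagram $\widehat{\varphi}\circ\zeta_S=\zeta_T\circ\alpha|_{\Id(S)}$ rather than spelling out the extension-through-density step, but the content is the same).

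The one genuine difference is in $(1)\Rightarrow(2)$. The paper proves the decreasing-form identity by a telescoping computation: setting $a_i=a_0+\sum_{j\le i}b_j$, it shows $[(s-a_{i-1})\vee 0]\wedge b_i=b_ie_i$ via an $f$-ring argument, then establishes a claim that $(s\wedge b)-(s\wedge a)=((s-a)\vee 0)\wedge(b-a)$, applies $\alpha$ to obtain $b_i\alpha(e_i)=(\alpha(s)\wedge a_i)-(\alpha(s)\wedge a_{i-1})$, and sums over $i$. Your inductive argument is more elementary: you peel off one term at a time using only $s\wedge a_1=a_0+b_1e_1$, $s\vee a_1=a_1+\sum_{i\ge 2}b_ie_i$, and the lattice-group identity $x=(x\wedge a)+(x\vee a)-a$. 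Both approaches ultimately rest on the same $f$-ring computation (your identity $s\wedge a_1=a_0+b_1e_1$ requires showing $(\sum_{i\ge 2}b_ie_i)\wedge b_1\lnot e_1=0$, which is exactly the type of calculation the paper carries out), but your induction packages it more cleanly and avoids the auxiliary Claim. You should, however, supply that one-line $f$-ring justification for $s\wedge a_1=a_0+b_1e_1$ rather than merely asserting it.
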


\begin{proof}
(1)$\Rightarrow$(2): By Proposition~\ref{prop:5.4}, $\alpha|_{\Id(S)}$ is a well-defined de Vries morphism. Let $a_i = a_0 + \sum_{j=1}^i b_j$. Then $a_0 < \cdots < a_n$ and
\begin{eqnarray*}
s - a_{i-1} &=& (a_0 + b_1e_1 + \cdots + b_ne_n) - a_{i-1} \\
&=& (b_ie_i + \cdots + b_ne_n) - (a_{i-1} - a_0 - b_1e_1 - \cdots - b_{i-1}e_{i-1}) \\
&=& (b_ie_i + \cdots + b_ne_n) - (a_0 + b_1 + \cdots + b_{i-1} - a_0 - b_1e_1 - \cdots - b_{i-1}e_{i-1}) \\
&=& (b_ie_i + \cdots + b_ne_n) - (b_1(1-e_1) + \cdots + b_{i-1}(1-e_{i-1})) \\
&=& (b_ie_i + \cdots + b_ne_n) - (b_1\lnot e_1 + \cdots + b_{i-1}\lnot e_{i-1}).
\end{eqnarray*}
This exhibits $s-a_{i-1}$ as the difference of two elements greater than or equal to 0, and we claim their meet is 0. To see this, as the $e_i$ are decreasing, we have
\begin{eqnarray*}
0 & \le & (b_ie_i + \cdots + b_ne_n) \wedge (b_1\lnot e_1 + \cdots + b_{i-1}\lnot e_{i-1}) \\
& \le & (b_ie_i + \cdots + b_ne_i) \wedge (b_1\lnot e_{i-1} + \cdots + b_{i-1}\lnot e_{i-1}) \\
&=& (b_i + \cdots + b_n)e_i \wedge (b_1 + \cdots + b_{i-1})\lnot e_{i-1},
\end{eqnarray*}
which is 0 by using the $f$-ring identity twice along with $e_i\wedge\lnot e_{i-1}=0$. Therefore, by \cite[Ch.~XIII, Lem.~4]{Bir79}, $(s-{a_{i-1}}) \vee 0 = b_i e_i + \cdots + b_n e_n$. As was shown in the proof of Theorem~\ref{rep}, $[(s-a_{i-1})\vee 0]\wedge b_i = b_i e_i$.

\begin{claim}\label{claim_2}
Let $s \in S$ and let $a,b \in A$ with $a < b$. Then $(s \wedge b) - (s \wedge a) =  ((s-a) \vee 0) \wedge(b-a)$.
\end{claim}

\begin{proof}
We have
\begin{eqnarray*}
(s\wedge b) - (s \wedge a) &=& ((s \wedge b) - a) - ((s \wedge a) - a) \\
&=& ((s-a) \wedge (b-a)) - ((s-a) \wedge (a-a)) \\
&=& ((s-a) \wedge (b-a)) - ((s-a) \wedge 0).
\end{eqnarray*}
Write $t = (s-a) \wedge (b-a)$. Then $(s\wedge b) - (s\wedge a) = t - (t\wedge 0)$ because $0 \le b-a$. By  \cite[Ch.~XIII, Thm.~7]{Bir79}, $t = (t\vee 0) + (t \wedge 0)$. Therefore, $t- (t \wedge 0)  = t\vee 0$. Thus, $(s\wedge b) - (s\wedge a) = t \vee 0 = ((s-a) \wedge (b-a)) \vee 0 = ((s-a) \vee 0) \wedge (b-a)$ since $S$ is a distributive lattice (\cite[Ch.~ XIII, Thm.~4]{Bir79}).
\end{proof}

As $\alpha([(s-a_{i-1})\vee 0]\wedge b_i) = [(\alpha(s)-a_{i-1})\vee 0]\wedge b_i$ and $\alpha(b_ie_i)=b_i\alpha(e_i)$, we obtain $b_i\alpha(e_i) = [(\alpha(s)-a_{i-1})\vee 0]\wedge b_i$. Since $b_i=a_i-a_{i-1}$, by Claim~\ref{claim_2}, $[(\alpha(s)-a_{i-1})\vee 0]\wedge b_i = (\alpha(s)\wedge a_i) - (\alpha(s) \wedge a_{i-1})$. As $a_0 \le s \le a_n$, we have $a_0 \le \alpha(s) \le a_n$. Consequently,
\[
\alpha(s) - a_0 = (\alpha(s)\wedge a_n) - (\alpha(s)\wedge a_0) = \sum_{i=1}^n ((\alpha(s)\wedge a_i) - (\alpha(s)\wedge a_{i-1})) = \sum_{i=1}^n b_i \alpha(e_i).
\]
Adding $a_0$ to both sides of the equation finishes the proof.

(2)$\Rightarrow$(3): Let $\varphi : X \to Y$ be the dual of the de Vries morphism $\alpha|_{\Id(S)}$. First let $e \in \Id(S)$. By \cite[Ch.~I.6]{deV62}, the following diagram commutes.
\[
\xymatrix{
\Id(S) \ar[r]^{\alpha|_{\Id(S)}} \ar[d]_{\zeta_S} & \Id(T) \ar[d]^{\zeta_T} \\
\mathcal{RO}(Y) \ar[r]_{\widehat{\varphi}} & \mathcal{RO}(X)
}
\]
We have $\varphi^*(\chi_U) = \chi_{\widehat{\varphi}(U)}$. This implies $\varphi^*(\eta_S(e)) = \eta_T(\alpha(e))$ for each $e \in \Id(S)$. Now, let $s \in S$ be arbitrary. Write $s = a_0 + \sum_{i=1}^n b_i e_i$ in decreasing form. Then, since $\eta_S,\eta_T$ are $A$-algebra homomorphisms, we have
\begin{eqnarray*}
\eta_T(\alpha(s)) &=& \eta_T(a_0 + \sum_{i=1}^n b_i \alpha(e_i)) = a_0 + \sum_{i=1}^n b_i \eta_T(\alpha(e_i)) \\
&=& a_0 + \sum_{i=1}^n b_i \varphi^*(\eta_S(e_i)) = \varphi^*(\eta_S(s)).
\end{eqnarray*}
Here the first equality follows from (2) and the last equality from Lemma~\ref{lem:6.3}. This yields commutativity of the diagram in the statement of (3).

(3)$\Rightarrow$(1): By Proposition~\ref{induced map on FN}, $\varphi^*$ is a proximity morphism. Because $\eta_S$ and $\eta_T$ are $\ell$-algebra embeddings which preserve and reflect proximity, all the proximity morphism axioms but the fourth are clearly true for $\alpha$. To prove axiom (4), let $s \in S$. Since $\varphi^*$ is a proximity morphism, $\varphi^*(\eta_S(s)) = \bigvee \{ \varphi^*(f) : f \prec_Y \eta_S(s) \}$. Because $\eta_S[S]$ is dense in $FN(Y)$ and $\eta_S$ reflects proximity, we have
\[
\varphi^*(\eta_S(s)) = \bigvee \{ \varphi^*(\eta_S(u)) : \eta_S(u) \prec_Y \eta_S(s) \} = \bigvee \{ \varphi^*(\eta_S(u)) : u \prec s \}.
\]
By (3) this yields $\eta_T(\alpha(s)) = \bigvee \{ \eta_T(\alpha(u)) : u \prec s \}$. Now, since $\eta_T$ is order reflecting, $\alpha(s)$ is an upper bound of $\{ \alpha(u) : u \prec s \}$. To see it is the least upper bound, let $t \in T$ satisfy $\alpha(u) \le t$ for each $u \prec s$. Then $\eta_T(\alpha(u)) \le \eta_T(t)$. Therefore, $\eta_T(\alpha(s)) \le \eta_T(t)$. Thus, $\alpha(s) \le t$. Consequently, $\alpha(s) = \bigvee \{ \alpha(u) : u \prec s\}$. This finishes the proof that $\alpha$ is a proximity morphism.
\end{proof}

\begin{corollary} \label{unique extension of map}
Let $(S,\prec)$ and $(T,\prec)$ be proximity Specker $A$-algebras. If $\sigma : \Id(S) \to \Id(T)$ is a de Vries morphism, then there is a unique proximity morphism $\alpha : S \to T$ with $\alpha|_{\Id(S)} = \sigma$.
\end{corollary}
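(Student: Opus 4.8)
The plan is to build $\alpha$ directly from $\sigma$ by prescribing its values on decreasing forms, and then to recognize it as a proximity morphism via condition~(2) of Theorem~\ref{characterization of proximity morphism}. First I would dispatch the degenerate case $T=\{0\}$: here $\sigma$ is the zero map, the zero map $S\to T$ is easily checked to satisfy the proximity morphism axioms, and it is the only map $S\to T$, so there is nothing to do. Assume then that $T$ is nontrivial. Recall from the start of Section~5 that every $s\in S$ has a \emph{unique} decreasing decomposition $s=a_0+\sum_{i=1}^n b_ie_i$ with $b_1,\dots,b_n>0$ and $1=e_0>e_1>\cdots>e_n>0$. Define $\alpha\colon S\to T$ by $\alpha(s)=a_0+\sum_{i=1}^n b_i\,\sigma(e_i)$; uniqueness of the decreasing form makes this well defined, and when $s=a_0\in A$ (so $n=0$) it gives $\alpha(a_0)=a_0$.

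Next I would verify $\alpha|_{\Id(S)}=\sigma$. For $e\in\Id(S)$ with $0<e<1$, the full orthogonal form of $e$ is $e=0\cdot\lnot e+1\cdot e$, so its decreasing form is $e=0+1\cdot e$, and hence $\alpha(e)=\sigma(e)$. For $e=0$ we get $\alpha(0)=0=\sigma(0)$ by (M1), while for $e=1$ the decreasing form is just $e=1$, giving $\alpha(1)=1$; and $\sigma(1)=1$ holds because applying (M3) to $1\prec 1$ yields $\lnot\sigma(\lnot 1)=\lnot\sigma(0)=1\prec\sigma(1)$, whence $\sigma(1)=1$ by (DV2). Thus $\alpha|_{\Id(S)}=\sigma$, which in particular is a de Vries morphism, and the defining formula for $\alpha$ is precisely the formula $\alpha(s)=a_0+\sum_{i=1}^n b_i\alpha(e_i)$ appearing in condition~(2) of Theorem~\ref{characterization of proximity morphism}. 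Therefore that theorem applies and $\alpha$ is a proximity morphism extending $\sigma$.

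For uniqueness, if $\alpha'\colon S\to T$ is any proximity morphism with $\alpha'|_{\Id(S)}=\sigma$, then the implication (1)$\Rightarrow$(2) of Theorem~\ref{characterization of proximity morphism} forces $\alpha'(s)=a_0+\sum_{i=1}^n b_i\alpha'(e_i)=a_0+\sum_{i=1}^n b_i\sigma(e_i)=\alpha(s)$ for $s$ in decreasing form, so $\alpha'=\alpha$. I expect no real obstacle here: the only points needing attention are the well-definedness of $\alpha$, which rests entirely on the uniqueness of decreasing forms from Section~5, and the two boundary cases $e\in\{0,1\}$ in identifying $\alpha|_{\Id(S)}$ with $\sigma$, which use only (M1) and the routine fact $\sigma(1)=1$. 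Alternatively, one could verify condition~(3): with $\varphi\colon X\to Y$ the de Vries dual of $\sigma$, Lemma~\ref{lem:6.3} together with the commuting square $\widehat{\varphi}\circ\zeta_S=\zeta_T\circ\sigma$ gives $\eta_T\circ\alpha=\varphi^*\circ\eta_S$; but routing the argument through condition~(2) is shorter.
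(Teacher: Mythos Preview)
Your proposal is correct and follows essentially the same approach as the paper: define $\alpha$ via the unique decreasing decomposition and invoke Theorem~\ref{characterization of proximity morphism}. You are in fact more explicit than the paper, which does not separately discuss the trivial case or the boundary idempotents $0,1$, and which leaves the appeal to condition~(2) of Theorem~\ref{characterization of proximity morphism} for existence implicit.
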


\begin{proof}
As we pointed out in the beginning of Section 5, each $s \in S$ has a unique decreasing decomposition $s=a_0+\sum_{i=1}^n b_ie_i$, where $b_1,\dots,b_n>0$ and $1>e_1>\cdots>e_n>0$. Define $\alpha:S\to T$ by $\alpha(s)=a_0+\sum_{i=1}^n b_i\sigma(e_i)$. Since the decreasing decomposition is unique, $\alpha$ is well defined, and it follows from the definition that $\alpha|_{\Id(S)} = \sigma$. If $\alpha^\prime$ is another proximity morphism extending $\sigma$, then Theorem~\ref{characterization of proximity morphism} implies that $\alpha^\prime(s) = a_0 + \sum_{i=1}^n b_i \sigma(e_i) = \alpha(s)$, and thus $\alpha$ is the unique proximity morphism extending $\sigma$.
\end{proof}

\begin{corollary} \label{1-1 correspondence}
With the notation of Theorem~\ref{characterization of proximity morphism} and with $S$ and $T$ Baer, there are 1-1 correspondences between the proximity morphisms $S \to T$, the de Vries morphisms $\Id(S) \to \Id(T)$, and the continuous maps $X \to Y$.
\end{corollary}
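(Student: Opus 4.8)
The plan is to obtain the three-term correspondence by composing two bijections: one between proximity morphisms $S\to T$ and de Vries morphisms $\Id(S)\to\Id(T)$, and one between de Vries morphisms $\Id(S)\to\Id(T)$ and continuous maps $X\to Y$. For the first I would use restriction in one direction and extension in the other. Given a proximity morphism $\alpha:S\to T$, Proposition~\ref{prop:5.4} shows that $\alpha|_{\Id(S)}:\Id(S)\to\Id(T)$ is a de Vries morphism, so restriction is well defined. Conversely, given a de Vries morphism $\sigma:\Id(S)\to\Id(T)$, Corollary~\ref{unique extension of map} produces a proximity morphism $\widetilde{\sigma}:S\to T$ with $\widetilde{\sigma}|_{\Id(S)}=\sigma$. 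These two assignments are mutually inverse: restricting $\widetilde{\sigma}$ recovers $\sigma$ by construction, and since a given proximity morphism $\alpha$ is itself a proximity morphism restricting to $\alpha|_{\Id(S)}$, the uniqueness clause of Corollary~\ref{unique extension of map} forces the extension of $\alpha|_{\Id(S)}$ to be $\alpha$ again. (This half needs neither completeness nor the Baer condition.)

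For the second bijection I would appeal to de Vries duality, and this is where the Baer hypothesis enters. Since $S$ and $T$ are Baer Specker $A$-algebras, $\Id(S)$ and $\Id(T)$ are complete Boolean algebras by \cite[Thm.~4.3]{BMMO13a}, hence $(\Id(S),\prec)$ and $(\Id(T),\prec)$ are de Vries algebras. From the construction of the space of ends, $\zeta_S:\Id(S)\to\mathcal{RO}(Y)$ and $\zeta_T:\Id(T)\to\mathcal{RO}(X)$ are isomorphisms \cite[Ch.~I.3, I.4]{deV62}; that is, $Y$ is the de Vries dual of $(\Id(S),\prec)$ and $X$ is the de Vries dual of $(\Id(T),\prec)$. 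The dual equivalence of {\bf DeV} and {\bf KHaus} from \cite{deV62} then yields a bijection between de Vries morphisms $\Id(S)\to\Id(T)$ and continuous maps $X\to Y$, the arrow being reversed just as $\widehat{\varphi}:\mathcal{RO}(Y)\to\mathcal{RO}(X)$ is associated with $\varphi:X\to Y$.

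Composing the two bijections gives the desired 1-1 correspondence between proximity morphisms $S\to T$ and continuous maps $X\to Y$, completing the three-way correspondence. As a consistency check, one can verify via Theorem~\ref{characterization of proximity morphism}(3) that the composite sends a proximity morphism $\alpha$ to exactly the continuous map $\varphi$ making that diagram commute, since $\varphi$ is by definition the de Vries dual of $\alpha|_{\Id(S)}$.

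I do not anticipate a serious obstacle, since the corollary is essentially a bookkeeping assembly of Proposition~\ref{prop:5.4}, Corollary~\ref{unique extension of map}, the Baer--completeness equivalence, and de Vries duality. The only genuinely error-prone points are keeping the variance straight --- because the dual of $\Id(S)$ is $Y$ and the dual of $\Id(T)$ is $X$, a de Vries morphism $\Id(S)\to\Id(T)$ corresponds to a map $X\to Y$ rather than $Y\to X$ --- and confirming that the restriction and extension maps really are mutual inverses, which rests entirely on the uniqueness statements already established.
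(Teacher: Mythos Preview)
Your proposal is correct and follows essentially the same approach as the paper: the paper's proof simply cites Corollary~\ref{unique extension of map} for the bijection $\alpha\mapsto\alpha|_{\Id(S)}$ between proximity morphisms and de Vries morphisms, and then cites de Vries duality \cite[Ch.~I.6]{deV62} for the bijection between de Vries morphisms $\Id(S)\to\Id(T)$ and continuous maps $X\to Y$. Your version is more detailed, in particular in spelling out why restriction and extension are mutually inverse and in noting explicitly that the Baer hypothesis is used only to ensure $\Id(S)$ and $\Id(T)$ are complete so that de Vries duality applies, but the substance is the same.
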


\begin{proof}
By Corollary~\ref{unique extension of map}, $\alpha \mapsto \alpha|_{\Id(S)}$ is a 1-1 correspondence between the proximity morphisms $S \to T$ and the de Vries morphisms $\Id(S) \to \Id(T)$. By \cite[Ch.~I.6]{deV62}, there is a 1-1 correspondence between the de Vries morphisms $\Id(S) \to \Id(T)$ and the continuous maps $X \to Y$.
\end{proof}

\section{The space of ends of a proximity Specker algebra}

As we have seen, for a proximity Specker $A$-algebra $(S,\prec)$, the space of ends of $(\Id(S),\prec)$ is useful for representing $S$ as a ring of normal functions. In this section we pursue this further by developing the notion of ends for a proximity Specker $A$-algebra $(S,\prec)$.

For a continuous map $\varphi : X \to Y$ between compact Hausdorff spaces, we recall from the previous section that the de Vries morphism $\widehat{\varphi} : \mathcal{RO}(Y) \to \mathcal{RO}(X)$, given by $\widehat{\varphi}(U) = {\sf Int}({\sf Cl}(\varphi^{-1}(U)))$, and the proximity morphism $\varphi^* : FN(Y) \to FN(X)$, given by $\varphi^*(f) = (f \circ \varphi)^\#$, are connected by the formula
\[
\varphi^*(f)^{-1}( {\uparrow}a) = \widehat{\varphi}(f^{-1}( {\uparrow}a)).
\]
We also recall that since $\widehat{\varphi}$ is a de Vries morphism, it has the property that whenever $e_i \prec k_i$ for $1 \le i \le n$, then $\widehat{\varphi}(e_1 \vee \cdots \vee e_n) \prec \widehat{\varphi}(k_1) \vee \cdots \vee \widehat{\varphi}(k_n)$ \cite[Lem.~2.2]{Bez12}.

\begin{proposition}\label{prop:7.1}
\begin{enumerate}
\item[]
\item Let $\varphi:X\to Y$ be a continuous map between compact Hausdorff spaces and let $f,g,h,k \in FN(Y)$. If $f \prec_Y h$ and $g \prec_Y k$, then $\varphi^*(f+g) \prec_X \varphi^*(h) + \varphi^*(k)$.
\item Let $\alpha : S \to T$ be a proximity morphism between proximity Specker $A$-algebras and let $s,t,u,v \in S$. If $s \prec u$ and $t \prec v$, then $\alpha(s+t) \prec \alpha(u) + \alpha(v)$.
\end{enumerate}
\end{proposition}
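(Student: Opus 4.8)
The plan is to prove (1) directly, by computing both sides of the asserted relation $\prec_X$ at each $\up a$ in terms of $\widehat{\varphi}$ and the sets $f^{-1}(\up a)$, and then to deduce (2) from (1) by transporting along the embeddings $\eta_S,\eta_T$ of Theorem~\ref{rep}.

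For (1), I would fix $a \in A$ and pick $a_0 < \cdots < a_n$ in $A$ containing all values of $f,g,h,k$. Set $P_{ij} = f^{-1}(\up a_i)\cap g^{-1}(\up a_j)$, $Q_{ij} = h^{-1}(\up a_i)\cap k^{-1}(\up a_j)$, and $J = \{(i,j): a_i+a_j\ge a\}$, a finite index set. Using Lemma~\ref{AlgebraicPropertiesOfFN}(1) --- where one may restrict the parameters $b,c$ to $a_0,\dots,a_n$, since replacing $b$ by the least value $a_i\ge b$ leaves $f^{-1}(\up b)$ unchanged while only weakening the constraint $b+c\ge a$ --- one gets $(f+g)^{-1}(\up a)=\bigvee_{(i,j)\in J}P_{ij}$ in $\mathcal{RO}(Y)$ and, via $\varphi^*(h)^{-1}(\up b)=\widehat{\varphi}(h^{-1}(\up b))$ together with the same reduction, $(\varphi^*(h)+\varphi^*(k))^{-1}(\up a)=\bigvee_{(i,j)\in J}(\widehat{\varphi}(h^{-1}(\up a_i))\cap\widehat{\varphi}(k^{-1}(\up a_j)))$ in $\mathcal{RO}(X)$. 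Because $\widehat{\varphi}$ satisfies (M2), the $(i,j)$-term of the second join is $\widehat{\varphi}(Q_{ij})$, while applying $\widehat{\varphi}$ to the first join and using $\varphi^*(f+g)^{-1}(\up a)=\widehat{\varphi}((f+g)^{-1}(\up a))$ gives $\varphi^*(f+g)^{-1}(\up a)=\widehat{\varphi}(\bigvee_{(i,j)\in J}P_{ij})$. From $f\prec_Y h$ and $g\prec_Y k$ one has $f^{-1}(\up a_i)\prec h^{-1}(\up a_i)$ and $g^{-1}(\up a_j)\prec k^{-1}(\up a_j)$; since $P_{ij}$ lies below each of $f^{-1}(\up a_i)$ and $g^{-1}(\up a_j)$, axioms (DV3) and (DV4) give $P_{ij}\prec Q_{ij}$ for all $(i,j)\in J$. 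Now the recalled property of the de Vries morphism $\widehat{\varphi}$ \cite[Lem.~2.2]{Bez12}, applied to the finite family $(P_{ij},Q_{ij})_{(i,j)\in J}$, yields $\widehat{\varphi}(\bigvee_{(i,j)\in J}P_{ij})\prec\bigvee_{(i,j)\in J}\widehat{\varphi}(Q_{ij})$, i.e.\ $\varphi^*(f+g)^{-1}(\up a)\prec(\varphi^*(h)+\varphi^*(k))^{-1}(\up a)$. (If $J=\varnothing$, both sides are $\varnothing$ and $\varnothing\prec\varnothing$.) As $a$ was arbitrary, $\varphi^*(f+g)\prec_X\varphi^*(h)+\varphi^*(k)$, proving (1).

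For (2), if $T$ is trivial the assertion is immediate from (P1); otherwise I would invoke the implication (1)$\Rightarrow$(3) of Theorem~\ref{characterization of proximity morphism} to obtain a continuous map $\varphi:X\to Y$ (with $X,Y$ the spaces of ends of $\Id(T)$ and $\Id(S)$) for which $\eta_T\circ\alpha=\varphi^*\circ\eta_S$. Given $s\prec u$ and $t\prec v$ in $S$, Theorem~\ref{rep} gives $\eta_S(s)\prec_Y\eta_S(u)$ and $\eta_S(t)\prec_Y\eta_S(v)$, so (1) applied with $(f,g,h,k)=(\eta_S(s),\eta_S(t),\eta_S(u),\eta_S(v))$ gives $\varphi^*(\eta_S(s)+\eta_S(t))\prec_X\varphi^*(\eta_S(u))+\varphi^*(\eta_S(v))$. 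Since $\eta_S,\eta_T$ are $\ell$-algebra homomorphisms and the square commutes, the left side is $\eta_T(\alpha(s+t))$ and the right side is $\eta_T(\alpha(u)+\alpha(v))$; as $\eta_T$ reflects proximity (Theorem~\ref{rep}), this gives $\alpha(s+t)\prec\alpha(u)+\alpha(v)$.

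The only delicate point is the bookkeeping in (1): reducing both joins of Lemma~\ref{AlgebraicPropertiesOfFN}(1) to the common finite index set $J$ and recognizing, through (M2) for $\widehat{\varphi}$, that $(\varphi^*(h)+\varphi^*(k))^{-1}(\up a)$ coincides with $\bigvee_{(i,j)\in J}\widehat{\varphi}(Q_{ij})$, so that \cite[Lem.~2.2]{Bez12} applies verbatim. Everything else, including the passage to (2), is routine.
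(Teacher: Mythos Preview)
Your proof is correct and follows essentially the same route as the paper's: both compute $\varphi^*(f+g)^{-1}(\up a)$ and $(\varphi^*(h)+\varphi^*(k))^{-1}(\up a)$ via Lemma~\ref{AlgebraicPropertiesOfFN}(1) and the identity $\varphi^*(\cdot)^{-1}(\up a)=\widehat{\varphi}((\cdot)^{-1}(\up a))$, then apply \cite[Lem.~2.2]{Bez12} to the finite family $P_{ij}\prec Q_{ij}$, and deduce (2) from (1) through the commuting square of Theorem~\ref{characterization of proximity morphism} and the proximity-reflecting embedding $\eta_T$ of Theorem~\ref{rep}. Your version is simply more explicit about restricting the join to the finite index set $J$ and about invoking (M2) to identify $\widehat{\varphi}(Q_{ij})$ with $\widehat{\varphi}(h^{-1}(\up a_i))\cap\widehat{\varphi}(k^{-1}(\up a_j))$; the paper leaves both points implicit.
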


\begin{proof}
(1) It is sufficient to prove that $\varphi^*(f+g)^{-1}({\uparrow}a) \prec (\varphi^*(h) + \varphi^*(k))^{-1}({\uparrow}a)$ for each $a \in A$. By Lemma~\ref{AlgebraicPropertiesOfFN}(1),
\[
\varphi^*(f+g)^{-1}({\uparrow}a) = \widehat{\varphi}\left((f+g)^{-1}(\up a)\right) =  \widehat{\varphi} \left( \bigvee_{b+c \ge a} f^{-1}({\uparrow}b) \cap g^{-1}({\uparrow}c) \right).
\]
On the other hand, from the same lemma, we have
\[
(\varphi^*(h) + \varphi^*(k))^{-1}({\uparrow}a) = \bigvee_{b+c \ge a} \varphi^*(h)^{-1}({\uparrow}b) \cap \varphi^*(k)^{-1}({\uparrow}c) =
\bigvee_{b+c \ge a} \widehat{\varphi}(h^{-1})({\uparrow}b) \cap \widehat{\varphi}(k^{-1})({\uparrow}c).
\]
Because $f \prec_Y h$ and $g \prec_Y k$, for each $b,c \in A$,  we have $f^{-1}({\uparrow}b) \prec h^{-1}({\uparrow}b)$ and $g^{-1}({\uparrow}c) \prec k^{-1}({\uparrow}c)$. Therefore, $f^{-1}({\uparrow}b) \cap g^{-1}({\uparrow}c) \prec h^{-1}({\uparrow}b) \cap k^{-1}({\uparrow}c)$. Thus, since $\widehat{\varphi}$ is a de Vries morphism and the joins in question are finite joins,
\[
\widehat{\varphi} \left( \bigvee_{b+c \ge a} f^{-1}({\uparrow}b) \cap g^{-1}({\uparrow}c)) \right) \prec \bigvee_{b+c \ge a} \widehat{\varphi}(h^{-1}({\uparrow}b) \cap k^{-1}({\uparrow}c)) = \bigvee_{b+c \ge a} \widehat{\varphi}(h^{-1})({\uparrow}b) \cap \widehat{\varphi}(k^{-1}({\uparrow}c)).
\]
Consequently, $\varphi^*(f+g) \prec_X \varphi^*(h) + \varphi^*(k)$.

(2) Consider the maps $\eta_S : S \to FN(Y)$ and $\eta_T : T \to FN(X)$ provided by Theorem~\ref{rep}. By Theorem~\ref{characterization of proximity morphism}, $\varphi^* \circ \eta_S = \eta_T \circ \alpha$. Since $\eta_S$ preserves proximity, $\eta_S(s) \prec_Y \eta_S(u)$ and $\eta_S(t) \prec_Y \eta_S(v)$. As $\eta_S, \eta_T$ preserve addition, (1) yields
\begin{eqnarray*}
\eta_T(\alpha(s+t)) &=& \varphi^*(\eta_S(s+t)) = \varphi^*(\eta_S(s) + \eta_S(t)) \prec_X \varphi^*(\eta_S(u)) + \varphi^*(\eta_S(v)) \\
&=& \eta_T(\alpha(u)) + \eta_T(\alpha(v)) = \eta_T(\alpha(u) + \alpha(v)).
\end{eqnarray*}
Thus, since $\eta_T$ reflects proximity, $\alpha(s+t) \prec \alpha(u) + \alpha(v)$.
\end{proof}

We recall that if $S$ is an $\ell$-ring, then the \emph{absolute value} of $s\in S$ is defined as $|s|=s\vee(-s)$. For each $s,t \in S$, we have $|s+t| \le |s| + |t|$ and $|st| \le |s| \cdot |t|$; moreover, if $S$ is an $f$-ring, then $|st|=|s|\cdot|t|$ (see, e.g., \cite[Ch.~XVII]{Bir79}). We also recall that an ideal $I$ of $S$ is an \emph{$\ell$-ideal} provided $|s|\le|t|$ and $t\in I$ imply $s\in I$ for all $s,t\in S$. An $\ell$-ideal $I$ is {\em proper} if $I\ne S$.

Let $(S,\prec)$ be a proximity Specker $A$-algebra. For $A\subseteq S$, set
$$
{\thd}A = \{ s \in S : |s| \prec a \text{ for some }a \in A\}.
$$

\begin{definition}
We call an $\ell$-ideal $I$ of a proximity Specker $A$-algebra $(S,\prec)$ a {\it round ideal} provided $I=S$ or ${\thd}I=I$ and $I \cap A = 0$. We call $I$ an {\it end} provided $I$ is maximal among proper round ideals of $(S,\prec)$.
\end{definition}

\begin{lemma}\label{kernel is round}
Let $\alpha : S \to T$ be a proximity morphism between proximity Specker $A$-algebras with $T$ nontrivial. If $I$ is an $\ell$-ideal of $T$ with $I \cap A = 0$, then $\thd \alpha^{-1}(I)$ is a proper round ideal of $S$.
\end{lemma}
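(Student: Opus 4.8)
The goal is to show that $J := {\thd}\,\alpha^{-1}(I)$ is a proper round ideal of $S$, which means unpacking the definition: I must check (i) $J$ is an $\ell$-ideal of $S$, (ii) ${\thd}J = J$, (iii) $J \cap A = 0$, and (iv) $J \neq S$. The plan is to work through these in that order, leaning on Proposition~\ref{prop:7.1}(2) for the additive interaction with $\prec$, on the order-preserving property of $\alpha$ (Remark~\ref{notes on proximity morphisms}) and the fact that $\alpha$ fixes $A$ pointwise, and on basic $\ell$-ring identities for absolute values.

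\textbf{Key steps.} First, set $K := \alpha^{-1}(I)$; since $\alpha$ is order preserving and $\alpha(a) = a$ for $a \in A$, and since $I$ is an $\ell$-ideal with $I \cap A = 0$, one checks $K$ is a subset of $S$ closed under the relevant operations well enough for the round-ideal construction — but I should be careful: $K$ need not itself be an ideal since $\alpha$ is not a ring homomorphism. The point is that ${\thd}$ applied to any subset produces an $\ell$-ideal once we have the right closure properties. So for step (i) I would verify directly that $J = {\thd}K$ is an $\ell$-ideal: closure under addition uses Proposition~\ref{prop:7.1}(2) (if $|s_1| \prec k_1$ and $|s_2| \prec k_2$ with $k_1, k_2 \in K$, then $|s_1 + s_2| \le |s_1| + |s_2| \prec \alpha$-image considerations give $|s_1+s_2| \prec k_1 + k_2$, and $\alpha(k_1+k_2)$... ) — here I need that $K$ is closed under addition, which follows because $\alpha(k_1 + k_2) = ?$; since $\alpha$ is not additive in general, I instead use that for $k_1,k_2 \in K$ with $|s| \prec k_1 + k_2$, Proposition~\ref{prop:7.1}(2)-type reasoning via the embedding $\eta$ shows $\alpha$ applied to the relevant combination lands in $I$. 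Closure under multiplication by arbitrary $r \in S$: given $|s| \prec k \in K$, write $|r| \le a$ for some $a \in A$ (possible since $S$ is Specker, hence every element is bounded by a scalar), then $|rs| = |r||s| \le a|s| \prec ak$ by (P7), and $\alpha(ak) = a\alpha(k) \in I$. The absolute-value absorption property of $\ell$-ideals is immediate from (P3).

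\textbf{The main obstacle.} The delicate point — and the one I expect to be the crux — is (iv), showing $J \neq S$, equivalently $1 \notin J$. Suppose $1 \in J$; then $1 = |1| \prec k$ for some $k \in K = \alpha^{-1}(I)$. By (P2), $1 \le k$, and by (P7) and monotonicity of $\alpha$ we get $1 = \alpha(1) \le \alpha(k) \in I$ (using Remark~\ref{notes on proximity morphisms} that $\alpha$ preserves order and fixes $1$). But an $\ell$-ideal containing an element $\ge 1$ contains $1$ (since $1 \le \alpha(k)$ forces $1 \in I$ by absolute-value absorption), so $I = T$, contradicting $I \cap A = 0$ because $T$ is nontrivial (so $1 \neq 0$ in $T$, hence $1 \in I \cap A$ would be a nonzero element of $A$ in $I$). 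Finally, for (ii) ${\thd}J = J$: the inclusion $J \subseteq {\thd}J$ uses interpolation (P9) — if $|s| \prec k \in K$, interpolate $|s| \prec r \prec k$, then $r \in J$ (as $|r|\le r$... one needs $r \ge 0$, which can be arranged by replacing $r$ with $r \vee 0$) and $|s| \prec r$ shows $s \in {\thd}J$; the reverse inclusion ${\thd}J \subseteq J$ uses transitivity of $\prec$ via (P3) together with the multiplication/addition closure. For (iii), $J \cap A = 0$: if $0 < a \in A$ with $a \in J$, then $a = |a| \prec k$ for some $k \in K$, so $a \le \alpha(k) \in I$ (as above), giving $a \in I \cap A = 0$, a contradiction. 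Thus $J$ is a proper round ideal of $S$.
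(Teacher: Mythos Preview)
Your overall structure matches the paper's: verify that $J = {\thd}\,\alpha^{-1}(I)$ is an $\ell$-ideal, that ${\thd}J = J$, that $J \cap A = 0$, and hence that $J$ is proper. Your arguments for multiplication closure, the $\ell$-ideal absorption property, roundness via interpolation (P9), and $J \cap A = 0$ are all essentially the paper's arguments.

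The gap is in closure under addition. You correctly observe that $\alpha^{-1}(I)$ need not be closed under $+$ because $\alpha$ is not additive, and you correctly identify Proposition~\ref{prop:7.1}(2) as the tool, but you do not say how to apply it. Writing ``Proposition~\ref{prop:7.1}(2)-type reasoning via the embedding $\eta$ shows $\alpha$ applied to the relevant combination lands in $I$'' is not a proof. The issue is that Proposition~\ref{prop:7.1}(2) needs \emph{strict} proximity inputs: to conclude anything about $\alpha(k_1 + k_2)$ you would need $k_i \prec k_i'$ with $\alpha(k_i') \in I$, which you do not have. The paper's fix is to interpolate on the \emph{inside}: given $|s_i| \prec k_i$ with $\alpha(k_i) \in I$, choose $u_i'$ with $|s_i| \prec u_i' \prec k_i$. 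Then $|s_1 \pm s_2| \le |s_1| + |s_2| \prec u_1' + u_2'$, and now Proposition~\ref{prop:7.1}(2) applied to $u_i' \prec k_i$ gives $\alpha(u_1' + u_2') \prec \alpha(k_1) + \alpha(k_2) \in I$; since $I$ is an $\ell$-ideal and $\alpha(u_1' + u_2') \ge 0$, this forces $\alpha(u_1' + u_2') \in I$, so $u_1' + u_2' \in \alpha^{-1}(I)$ and $s_1 \pm s_2 \in J$. You had all the ingredients; you just needed to insert this interpolation step before invoking the proposition.
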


\begin{proof}
Set $J = \thd \alpha^{-1}(I)$, and let $s,t \in J$. Then there are $u,v \in S$ with $|s| \prec u$, $|t| \prec v$, and $\alpha(u), \alpha(v) \in I$. Therefore, there are $u', v'$ with $|s| \prec u' \prec u$ and $|t| \prec v' \prec v$. We have $|s \pm t| \le |s| + |t| \prec u' + v'$. Since $\alpha$ is order preserving, $0 \le \alpha(u' + v')$. By Proposition~\ref{prop:7.1}(2), $\alpha(u' + v') \prec \alpha(u) + \alpha(v) \in I$. As $I$ is an $\ell$-ideal, $\alpha(u' + v') \in I$. This yields $u' + v' \in \alpha^{-1}(I)$, so $s\pm t \in J$. Next, let $s \in J$ and $t \in S$. Then $|s| \prec u$ for some $u \in S$ with $\alpha(u) \in I$. Since $S$ is a Specker $A$-algebra, write $t = \sum_{i=1}^n a_i e_i$ in orthogonal form, and observe that
\[
|t| = \left|\sum_{i=1}^n a_i e_i\right| \le \sum_{i=1}^n |a_i e_i| = \sum_{i=1}^n |a_i||e_i| \le \sum_{i=1}^n |a_i|,
\]
where the last inequality follows from Lemma~\ref{lem:3.9}(1). Therefore, there is $a \in A$ with $|t| \le a$. Then $|st| = |s| \cdot |t| \le a|s| \prec au$. As $\alpha(au) = a\alpha(u) \in I$, we see that $st \in J$. Thus, $J$ is an ideal of $S$. To see it is an $\ell$-ideal, let $t \in J$ and $s \in S$ satisfy $|s| \le |t|$. Then there is $u \in S$ with $|t| \prec u$ and $\alpha(u) \in I$. Therefore, $|s| \prec u$, and so $s \in J$.  We have thus proved that $J$ is an $\ell$-ideal of $S$. Next, take $s \in J$. Then $|s| \prec u$ for some $u\in S$ with $\alpha(u) \in I$. There is $t \in S$ with $|s| \prec t \prec u$. This implies $t\in J$, so $s \in \thd J$. Thus, $\thd J = J$. To see that $J \cap A = 0$, if $a \in J \cap A$, then $|a| \prec u$ for some $u \in S$ with $\alpha(u) \in I$. Therefore, $0 \le |a| = \alpha(|a|) \le \alpha(u) \in I$, so $|a| \in I$ since $I$ is an $\ell$-ideal. Because $I \cap A = 0$, we get $a = 0$. Consequently, $J \cap A = 0$, and so $J$ is a proper round ideal of $S$.
\end{proof}

For a proximity morphism $\alpha : S \to T$ between proximity Specker $A$-algebras, define the \emph{kernel} of $\alpha$ as
$$
\ker(\alpha) = {\thd}\alpha^{-1}(0).
$$
As noted in Remark~\ref{notes on proximity morphisms}(1), if $T$ is nontrivial, then $\alpha$ is nonzero. If $\alpha = 0$, it is clear that $\ker(\alpha)=S$. On the other hand, if $\alpha$ is nonzero, then $\alpha(a)=a$ for each $a\in A$.

\begin{proposition}\label{double down}
\begin{enumerate}
\item[]
\item Let $\alpha : S \to T$ be a proximity morphism with $T$ nontrivial. Then $\ker(\alpha)$ is a proper round ideal of $S$.
\item If $P$ is a minimal prime ideal of $S$, then ${\thd} P$ is a proper round ideal of $S$.
\end{enumerate}
\end{proposition}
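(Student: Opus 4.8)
This will be an immediate corollary of Lemma~\ref{kernel is round}. Since $T$ is nontrivial, it is a nontrivial torsion-free $f$-algebra over $A$, so $A$ embeds into $T$; in particular the zero ideal of $T$ is an $\ell$-ideal of $T$ with $(0)\cap A=0$. Applying Lemma~\ref{kernel is round} to $I=(0)$ gives that $\ker(\alpha)=\thd\alpha^{-1}(0)$ is a proper round ideal of $S$, which is all that (1) asserts.

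\textbf{Part (2).} The plan is a direct verification of the round-ideal axioms for $\thd P$. I would first record two standard facts about a minimal prime ideal $P$ of the $f$-ring $S$: (a) $P$ is an $\ell$-ideal, a classical property of minimal primes of $f$-rings; and (b) every element of $P$ is a zero-divisor in $S$, a general fact about minimal primes of commutative rings (localizing at $P$ yields a ring with a unique prime, so the image of any $p\in P$ is nilpotent there, hence $p$ is annihilated by something outside $P$). Since $S$ is torsion-free over the domain $A$, every nonzero $a\in A$ is a non-zero-divisor of $S$, so (b) gives $P\cap A=0$. Combining this with (a) yields $\thd P\cap A=0$: if $a\in\thd P\cap A$, then $|a|\prec p$ for some $p\in P$, so $0\le|a|\le p$, whence $|a|\in P$ because $P$ is an $\ell$-ideal, and therefore $|a|\in P\cap A=0$, forcing $a=0$. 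In particular $1\notin\thd P$, so $\thd P\ne S$.

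It then remains to see that $\thd P$ is a round $\ell$-ideal. For the ideal property: if $s,t\in\thd P$ with $|s|\prec p$ and $|t|\prec q$, $p,q\in P$, then (P6) gives $|s|+|t|\prec p+q$, and since $|s\pm t|\le|s|+|t|$ and $p+q\in P$, axiom (P3) yields $s\pm t\in\thd P$; for absorption, writing $t\in S$ in orthogonal form $t=\sum a_ie_i$ shows $|t|\le a$ for some $a\in A$ with $a\ge 0$ (idempotents lie in $[0,1]$ by Lemma~\ref{lem:3.9}(1)), so $|st|=|s|\,|t|\le a|s|$, and when $a>0$ axioms (P7) and (P3) give $|st|\prec ap\in P$, while $a=0$ forces $t=0$. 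The $\ell$-ideal property is immediate from (P3). Finally, for $\thd(\thd P)=\thd P$: the inclusion $\subseteq$ uses (P2) and (P3) (if $|s|\prec t\prec p$ with $p\in P$, note $t\ge 0$ so $t=|t|\prec p$, hence $|s|\prec p$), and $\supseteq$ uses (P9) together with the observation that a witness $r$ with $|s|\prec r\prec p$ satisfies $r\ge 0$, so $r=|r|\prec p\in P$ and thus $r\in\thd P$.

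There is no genuinely hard step; the main things requiring care are the sign bookkeeping — repeatedly using that $|x|\prec y$ forces $y\ge 0$, so one may freely replace $y$ by $|y|$ — and the correct attribution of the two facts about minimal primes (being an $\ell$-ideal, to the $f$-ring literature; consisting of zero-divisors, to commutative algebra) rather than reproving them. One could avoid the first attribution by using the Specker structure directly: $S$ is reduced (an element whose square vanishes has an orthogonal form forcing it to be $0$), and then a minimal prime coincides with $\{s\in S:se=0 \text{ for some } e\in\func{Id}(S)\setminus P\}$, which is visibly an $\ell$-ideal; but the cited route is shorter.
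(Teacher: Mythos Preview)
Your Part (1) is exactly the paper's argument.

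For Part (2) you take a different route from the paper. The paper's proof is a one-liner: having already established Lemma~\ref{kernel is round}, it simply notes that a minimal prime $P$ is an $\ell$-ideal (citing the $f$-ring literature, as you do) with $P\cap A=0$ (citing \cite[Lem.~4.5]{BMMO13a}), and then implicitly applies Lemma~\ref{kernel is round} with $\alpha=\mathrm{id}_S$ and $I=P$, so that $\thd P=\thd\,\mathrm{id}_S^{-1}(P)$ is a proper round ideal. (That the identity is a proximity morphism reduces to Lemma~\ref{lem:6.1}.) Your direct verification of the round-$\ell$-ideal axioms is correct and essentially reproves the special case $\alpha=\mathrm{id}$ of Lemma~\ref{kernel is round}; the computations with (P3), (P6), (P7), (P9) and the sign bookkeeping are all sound. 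Your argument for $P\cap A=0$ via ``minimal primes consist of zero-divisors'' plus torsion-freeness is a valid alternative to the citation the paper uses. The paper's approach is more economical since it recycles the lemma; yours is more self-contained and avoids needing to observe that $\mathrm{id}_S$ satisfies axiom (4) of a proximity morphism.
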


\begin{proof}
The first statement follows from Lemma~\ref{kernel is round} since $(0)$ is an $\ell$-ideal of $T$. For the second statement, it is sufficient to observe that if $P$ is a minimal prime ideal of $S$, then $P$ is an $\ell$-ideal by \cite[p.~196]{Sub67}, and $P \cap A = 0$ by \cite[Lem.~4.5]{BMMO13a}.
\end{proof}

In Theorem~\ref{char of ends} we give several characterizations of ends of a proximity Specker $A$-algebra $(S,\prec)$. For this we require the following lemma. We recall that by Proposition~\ref{prop:4.6}, for a proximity Specker $A$-algebra $(S,\prec)$, the restriction of $\prec$ is a proximity on $\Id(S)$.

\begin{lemma}\label{compatible}
Let $(S,\prec)$ be a proximity Specker $A$-algebra. If $s, t \in S$, we may write $s = a_0 + \sum_{i=1}^n b_i e_i$ and $t = a_0 + \sum_{i=1}^n b_i k_i$ in compatible decreasing form with each $b_i > 0$. Moreover, $s \le t$ iff $e_i \le k_i$ for each $i$, and $s \prec t$ iff $e_i \prec k_i$ for each $i$. Furthermore, $0 \le s$ iff $0 \le a_0$.
\end{lemma}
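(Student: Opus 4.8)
The plan is to reduce the statement to two things already in hand: the orthogonal--decreasing correspondence recorded at the start of Section~5, and the $\ell$-algebra embedding $\eta\colon S\to FN(X)$ of Theorem~\ref{rep}, which preserves and reflects both $\le$ and $\prec$, together with the fact --- used there and taken from \cite{deV62} --- that $\zeta\colon\Id(S)\to\mathcal{RO}(X)$ is a Boolean embedding preserving and reflecting proximity. First I would build the compatible form: write $s$ and $t$ in full orthogonal form and let $a_0<a_1<\cdots<a_n$ enumerate in increasing order the union of their two coefficient sets. For $1\le i\le n$ let $e_i\in\Id(S)$ be the (finite, hence existing) join of those idempotents in the orthogonal decomposition of $s$ with coefficient $\ge a_i$, and define $k_i$ analogously from $t$, the empty join being $0$; then $1\ge e_1\ge\cdots\ge e_n\ge 0$ and $1\ge k_1\ge\cdots\ge k_n\ge 0$. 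Running the orthogonal-to-decreasing computation of Section~5 with this refined chain in place of the coefficient set of the individual element gives $s=a_0+\sum_{i=1}^n b_ie_i$ and $t=a_0+\sum_{i=1}^n b_ik_i$ with $b_i:=a_i-a_{i-1}>0$, and I would take the enumeration so that $a_0$ is the least coefficient of $s$, so that $a_0$ is also the least value of $\eta(s)$.

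Next, for the two equivalences, put $f=\eta(s)$ and $g=\eta(t)$. Since $\eta$ is an $A$-algebra homomorphism extending $e\mapsto\chi_{\zeta(e)}$ and the $\zeta(e_i)$ and $\zeta(k_i)$ are decreasing regular open sets, Remark~\ref{decreasing} gives $f=a_0+\sum_{i=1}^n b_i\chi_{\zeta(e_i)}$ and $g=a_0+\sum_{i=1}^n b_i\chi_{\zeta(k_i)}$ as pointwise sums, so $f^{-1}(\up a_i)=\zeta(e_i)$ and $g^{-1}(\up a_i)=\zeta(k_i)$ for each $i$ (Lemma~\ref{easy first}). Hence $s\le t$ iff $f\le g$ iff $\zeta(e_i)\subseteq\zeta(k_i)$ for all $i$ (Lemma~\ref{refine}) iff $e_i\le k_i$ for all $i$, because $\zeta$ is an order embedding. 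Likewise $s\prec t$ iff $f\prec_X g$ iff $f^{-1}(\up a)\prec g^{-1}(\up a)$ for every $a\in A$; since for each $a$ the two sets $f^{-1}(\up a)$ and $g^{-1}(\up a)$ are, respectively, the $i$-th members $\zeta(e_i)$ and $\zeta(k_i)$ of the two chains for one and the same $i$ (or both are $X$, or both are $\varnothing$), this is equivalent to $\zeta(e_i)\prec\zeta(k_i)$ for all $i$, and then to $e_i\prec k_i$ for all $i$ because $\zeta$ preserves and reflects proximity.

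For the sign clause: if $0\le a_0$, then $b_ie_i\ge 0$ by Lemma~\ref{lem:3.9}(1), so $s=a_0+\sum b_ie_i\ge a_0\ge 0$. Conversely, suppose $0\le s$; if $n=0$ then $s=a_0\ge 0$, and if $n\ge 1$, then by the choice of $a_0$ we have $e_1\ne 1$, so $\lnot e_1\ne 0$, while $e_i\lnot e_1=0$ for all $i\ge 1$ since the $e_i$ are decreasing (Lemma~\ref{lem:3.9}(2)); thus $s\lnot e_1=a_0\lnot e_1\ge 0$ and Lemma~\ref{lem:3.9}(5) gives $0\le a_0$. (This last equivalence is also exactly what is proved for $FN(X)$ inside the proof of Theorem~\ref{FN algebra}, transported along $\eta$.) The hard part will be the first step: verifying that a common refinement of the two orthogonal decompositions really yields simultaneous decreasing forms sharing one and the same chain $a_0,b_1,\dots,b_n$, that the $e_i,k_i$ so produced are (weakly) decreasing, and that $a_0$ can be taken to be the least value of $s$; this is a routine but slightly delicate imitation of the bookkeeping at the start of Section~5, after which everything is a transcription through $\eta$ and $\zeta$.
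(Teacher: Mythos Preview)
Your proof is correct and follows essentially the same route as the paper's: both pass through the embedding $\eta$ of Theorem~\ref{rep} and the fact that $\zeta$ preserves and reflects order and proximity, reducing the two equivalences via Lemma~\ref{refine} to the corresponding statements for the idempotent chains. The only difference is where the compatible decreasing form is first assembled: the paper writes $\eta(s)$ and $\eta(t)$ in compatible form in $FN(X)$ using Lemmas~\ref{easy first} and~\ref{refine} and then pulls the idempotents back to $S$ via Claim~\ref{claim}, thereby bypassing the orthogonal bookkeeping you flag as ``the hard part''; you instead refine the two full orthogonal decompositions directly in $S$ and push forward, which is equivalent but requires the explicit check you outline. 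Your argument for the sign clause via multiplication by $\lnot e_1$ and Lemma~\ref{lem:3.9}(5) is the same device used inside the proof of Theorem~\ref{FN algebra}, while the paper simply observes that $a_0$ is the least value of $\eta(s)$; note that both arguments tacitly require that $a_0$ actually occur as a coefficient of $s$ (equivalently $e_1\ne 1$), so your remark about choosing $a_0$ to be the least coefficient of $s$ is exactly the point, and is consistent with how the lemma is applied later (always with $s\le t$).
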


\begin{proof}
Let $\eta : S \to FN(X)$ be the embedding of Theorem~\ref{rep}. By Lemmas~\ref{easy first}~and~\ref{refine} and Remark~\ref{decreasing}, we may write $\eta(s) = a_0 + \sum_{i=1}^n b_i \chi_{U_i}$ and $\eta(t) = a_0 + \sum_{i=1}^n b_i \chi_{V_i}$, where $X=U_0\supseteq U_1\supseteq\cdots\supseteq U_n\supseteq\varnothing$ and $X=V_0\supseteq V_1\supseteq\cdots\supseteq V_n\supseteq\varnothing$. Since $\eta(s),\eta(t)\in FN(X)$, the $U_i$ and $V_i$ are regular open subsets of $X$. Claim~\ref{claim} shows that there are $e_i, k_i \in \Id(S)$ with $\zeta(e_i) = U_i$ and $\zeta(k_i) = V_i$. Consequently, since $\eta$ is an $\ell$-algebra embedding, $s = a_0 + \sum_{i=1}^n b_ie_i$ and $t = a_0 + \sum_{i=1}^n b_i k_i$. Moreover, $s \le t$ iff $\eta(s) \le \eta(t)$, which by Lemma~\ref{refine} is equivalent to $U_i \subseteq V_i$ for each $i$. Since $\zeta:\Id(S)\to\mathcal{RO}(X)$ is an embedding, the last condition is equivalent to $e_i \le k_i$ for each $i$. Moreover, $s \prec t$ iff $\eta(s) \prec_X \eta(t)$, which is equivalent to $U_i \prec V_i$ for each $i$. As $\zeta$ preserves and reflects proximity, $U_i \prec V_i$ iff $e_i \prec k_i$ for each $i$. Finally, as each $b_i > 0$, the formula $s = a_0 + \sum_{i=1}^n b_i e_i$ implies that $0 \le s$ if $0\le  a_0$. For the converse, if $0 \le s$, then $0 \le \eta(s)$. By Lemma~\ref{easy first}(1) and Remark~\ref{decreasing}, $a_0$ is the smallest value of the function $\eta(s)$. Thus, $0 \le \eta(s)$ implies $0 \le a_0$. This finishes the proof.
\end{proof}

\begin{theorem}\label{char of ends}
Let $(S,\prec)$ be a nontrivial proximity Specker $A$-algebra and let $B=\Id(S)$. For an $\ell$-ideal $I$ of $S$, the following conditions are equivalent.
\begin{enumerate}
\item $I$ is an end of $(S,\prec)$.
\item $I$ is the kernel of a proximity morphism $\alpha : S \to A$.
\item $I = {\thd} P$ for some minimal prime ideal $P$ of $S$.
\item $I \cap B$ is an end of $(B,\prec)$ and $I$ is the ideal of $S$ generated by $I \cap B$.
\item $I$ is the ideal of $S$ generated by some end of $(B,\prec)$.
\end{enumerate}
\end{theorem}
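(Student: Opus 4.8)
The plan is to prove the cycle $(1)\Rightarrow(3)\Rightarrow(2)\Rightarrow(1)$ together with the equivalences $(3)\Leftrightarrow(4)\Leftrightarrow(5)$, exploiting the close connection between $\ell$-ideals of $S$ and ideals of $B=\Id(S)$ on the one hand, and the already-established dictionary between ends of $(B,\prec)$, minimal prime ideals of $B$, and points of the de Vries dual $X$ of $(B,\prec)$ on the other. The key structural tool is Lemma~\ref{compatible}, which lets us write any two elements in compatible decreasing form and reduces questions about $\prec$ and $\le$ on $S$ to the corresponding questions about idempotents; this is what will make the passage between $I$ and $I\cap B$ work smoothly. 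I expect the heart of the argument to be $(1)\Rightarrow(3)$ and the verification that the ideal of $S$ generated by an end of $(B,\prec)$ is again round (the ``$(5)\Rightarrow$ round'' direction), so I would organize the proof to funnel through those two points.

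First I would record two easy reductions. (a) For an $\ell$-ideal $I$ of $S$, an element $s\in S$ lies in $I$ iff $|s|\in I$, and if $s=a_0+\sum b_ie_i$ is in decreasing form with $b_i>0$ then, using that $I\cap A=0$ forces $a_0=0$ (else $a_0=a_0\cdot 1\in I\cap A$ after noting $|a_0|\le|s|\cdot$const fails — more carefully: if $0\ne a_0$ then some scalar multiple of $1$ lies in $I$), one shows $s\in I$ iff every $e_i\in I\cap B$; thus $I$ is determined by $I\cap B$ and equals the $\ell$-ideal (equivalently the ordinary ideal, since idempotents generate) of $S$ generated by $I\cap B$. (b) For a subset $E\subseteq B$, the ideal of $S$ it generates meets $B$ in the ideal of $B$ generated by $E$ (here one uses that $S$ is a Specker algebra and Lemma~\ref{lem:3.9}). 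With (a) and (b) in hand, $I\mapsto I\cap B$ and ``generated ideal'' are inverse bijections between round $\ell$-ideals of $S$ with $I\cap A=0$ and round ideals of $B$, and they preserve inclusion, hence carry ends to ends. This gives $(1)\Leftrightarrow(4)\Leftrightarrow(5)$ once I check that roundness transfers: if $I$ is round in $S$ then for $e\in I\cap B$ we have $|e|=e\prec s$ for some $s\in I$, and writing $s$ and $e$ in compatible decreasing form and applying Lemma~\ref{compatible} produces an idempotent $k\in I\cap B$ with $e\prec k$; conversely, if $I\cap B$ is round in $B$ and $s\in I$ with $s=\sum b_ie_i$ in decreasing form, pick $k_i\in I\cap B$ with $e_i\prec k_i$, and then $s=\sum b_ie_i \prec \sum b_ik_i\in I$ by Lemma~\ref{compatible} and (P6), so $I=\thd I$. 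The condition $I\cap A=0$ is automatic from $I\cap B=0$ being part of an end of $B$ together with the decreasing-form analysis.

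Next, $(3)\Leftrightarrow$ these: by \cite[Lem.~4.5]{BMMO13a} and \cite[p.~196]{Sub67} (cited already in Proposition~\ref{double down}), a minimal prime ideal $P$ of $S$ is an $\ell$-ideal with $P\cap A=0$, and $P\cap B$ is a minimal prime (equivalently maximal, since $B$ is Boolean) ideal of $B$. Moreover minimal primes of $S$ correspond bijectively to ultrafilters/maximal ideals of $B$ via $P\mapsto P\cap B$ (this is standard for Specker algebras and follows from $S/P$ being a totally ordered domain containing $A$ together with idempotents mapping to $0$ or $1$). Now $\thd P$ is round by Proposition~\ref{double down}(2), and $(\thd P)\cap B=\thd(P\cap B)$ — this again by the compatible-decreasing-form reduction: $s\in\thd P$ iff $|s|\prec u$ with $u\in P$, and intersecting with $B$ forces the idempotent case. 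Since De~Vries's theory (used throughout Sections~5--7) tells us that the ends of $(B,\prec)$ are exactly the sets $\thd M$ for $M$ a maximal ideal of $B$, we get that $\thd P$ corresponds under $(5)\Leftrightarrow(1)$ to an end of $B$, hence is an end of $S$; and conversely every end of $B$ arises this way. This closes $(3)$ into the circle.

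Finally $(2)$. For $(2)\Rightarrow(1)$: if $\alpha:S\to A$ is a proximity morphism (with $A$ viewed as the proximity Specker algebra $A$ itself, whose only idempotents are $0,1$), then by Lemma~\ref{kernel is round} with $T=A$ and $I=(0)$, $\ker\alpha=\thd\alpha^{-1}(0)$ is a proper round ideal; maximality — that it is actually an end — follows because $\alpha|_B:B\to\{0,1\}$ is a de Vries morphism by Proposition~\ref{prop:5.4}, hence (by de Vries duality / the description of ends of $B$ via maximal round filters) $\ker(\alpha|_B)=\{e:\alpha(e)=0\}$ is a maximal round ideal of $B$, i.e. an end; and $\ker\alpha\cap B$ equals this set (the compatible-decreasing-form reduction once more: $|s|\prec u$ with $\alpha(u)=0$ restricted to idempotents gives $e\prec k$ with $\alpha(k)=0$), and $\ker\alpha$ is generated by it, so by $(4)\Rightarrow(1)$ it is an end. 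For $(1)\Rightarrow(2)$ (equivalently $(3)\Rightarrow(2)$): given a minimal prime $P$ of $S$, the quotient map $S\to S/P$ followed by the inclusion $S/P\hookrightarrow A$ is not a proximity morphism on the nose, so instead I would build $\alpha$ directly — let $M=P\cap B$, let $x\in X$ be the corresponding end of $B$, and define $\alpha=\mathrm{ev}_x\circ\eta_S:S\to A$ where $\eta_S:S\to FN(X)$ is the embedding of Theorem~\ref{rep} and $\mathrm{ev}_x:FN(X)\to A$ sends $f\mapsto f(x)$. One checks $\mathrm{ev}_x$ is a proximity morphism $FN(X)\to FN(\{x\})=A$ — it is the map $\varphi^*$ induced by the inclusion $\varphi:\{x\}\hookrightarrow X$, so Proposition~\ref{induced map on FN} applies — hence $\alpha$ is a composite of proximity-morphism-like maps; the only axiom needing care is (M4)/(4), which holds because $x$ being an end means $\{e\in B:x\in\zeta(e)\}$ is a round filter, so $f(x)=\bigvee\{g(x):g\prec_X f\}$. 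Then $\ker\alpha=\thd\alpha^{-1}(0)$, and $\alpha^{-1}(0)\cap B=\{e:x\notin\zeta(e)\}=M$, so $\ker\alpha$ is generated (as a round ideal) by $\thd M$, which is the end corresponding to $P$. The main obstacle I anticipate is getting $\mathrm{ev}_x$ and the composite $\alpha$ to satisfy axiom (4) of a proximity morphism cleanly — i.e. that evaluation at a point of the de Vries dual genuinely realizes the ``least upper bound over $\prec$'' condition — and making sure the target is treated as the proximity Specker algebra $A=FN(\mathrm{pt})$ so that Proposition~\ref{induced map on FN} is literally applicable.
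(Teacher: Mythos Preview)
Your reduction (a) is where the argument breaks. You assert that for an $\ell$-ideal $I$ of $S$ with $I\cap A=0$, an element $s=a_0+\sum b_ie_i$ in decreasing form lies in $I$ iff every $e_i\in I\cap B$, and hence that $I=\langle I\cap B\rangle$. As stated this is false. Take $A=\mathbb{Q}[x]$ ordered so that $x$ is a positive infinitesimal, let $e\in B$ be a nontrivial idempotent, and set $I=xeS$. One checks directly that $I$ is an $\ell$-ideal (if $|s|\le xe|r|$ then $s$ is supported on $e$ with all values in $xA$, hence $s\in xeS$) and that $I\cap A=0$ (any scalar in $xeS$ is killed by $1-e$). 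The element $xe\in I$ has decreasing form $0+x\cdot e$, yet $e\notin I$ since $e=xer$ would make $x$ a unit; indeed $I\cap B=0$, so $\langle I\cap B\rangle=0\ne I$. Your bijection between $\ell$-ideals of $S$ with $I\cap A=0$ and ideals of $B$ therefore fails, and the deduction of $(1)\Leftrightarrow(4)\Leftrightarrow(5)$ from it collapses. Even if you retreat to round $\ell$-ideals, you give no argument for the key step $be\in I\Rightarrow e\in I$ when $0<b<1$ is a non-unit of $A$; roundness gives only an infinite chain $be\prec bk_1\prec bk_2\prec\cdots$ inside $I$, never escaping the coefficient $b$.

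The paper sidesteps this by routing through minimal primes rather than attempting a direct $I\leftrightarrow I\cap B$ correspondence. It proves $(1)\Rightarrow(3)$ by Zorn (extend $I$ to a prime $P$ with $P\cap A=0$; then $P$ is minimal, $\thd P$ is round by Proposition~\ref{double down}, and maximality forces $I=\thd P$), and then $(3)\Rightarrow(4)$ using the structure of $P$: since $P$ is generated by the maximal ideal $M=P\cap B$, an inequality $b_ik_i\le cp$ with $p\in M$ and Lemma~\ref{lem:3.9}(6) lets one strip off the possibly non-invertible coefficient $b_i$ and conclude $k_i\le p$. That comparison against an idempotent in $M$ is exactly the missing ingredient in your reduction (a). Your treatment of (2) and (3) via the minimal-prime/maximal-ideal-of-$B$ correspondence and the evaluation map is close to the paper's $(5)\Rightarrow(2)$ and $(2)\Rightarrow(3)$; one small slip is that $\ker(\alpha|_B)$ should be $\thd(\alpha|_B)^{-1}(0)$, not $(\alpha|_B)^{-1}(0)$.
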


\begin{proof}
(1)$\Rightarrow$(3): Let $I$ be an end of $(S,\prec)$. Then $I \cap A = 0$. A Zorn's lemma argument shows there is a prime ideal $P$ of $S$ with $I \subseteq P$ and $P \cap A = 0$. By \cite[Lem.~4.5]{BMMO13a}, $P$ is a minimal prime ideal of $S$. Because $I \subseteq P$ and $I$ is round, $I \subseteq {\thd} P$. Maximality of $I$ then forces $I = {\thd} P$ since $\thd P$ is a proper round ideal by Proposition~\ref{double down}(2).

(2)$\Rightarrow$(3): Let $I = \ker(\alpha)$ and let $\sigma$ be the restriction of $\alpha$ to $B$. Note that $\Id(A)$ is the two-element Boolean algebra {\bf 2}. By Proposition~\ref{prop:5.4}, $\sigma : B \to {\bf 2}$ is a de Vries morphism. Define $\ker(\sigma)={\thd}\sigma^{-1}(0)$. Obviously $\ker(\sigma) \subseteq \ker(\alpha) \cap B$. For the reverse inclusion, if $e \in \ker(\alpha) \cap B$, then there is $s \in S$ with $e \prec s$ and $\alpha(s) = 0$. By an argument  similar to the one given in the proof of Proposition~\ref{prop:4.6}, we can find $k \in B$ with $e \prec k$ and $ \sigma(k) = 0$. Therefore, $e \in \ker(\sigma)$, and so $\ker(\alpha) \cap B = \ker(\sigma)$. Thus, $E := I \cap B$ is an end of $(B,\prec)$ (see, e.g., \cite[Rem.~4.21]{BH13}). This implies that there is a maximal ideal $M$ of $B$ such that $E = \thd M$ (see, e.g., \cite[Sec.~3]{Bez10}). Let $P$ be the ideal of $S$ generated by $M$. By \cite[Lem.~2.5 and Thm.~2.7]{BMMO13a}, the Boolean homomorphism $B \to {\bf 2}$ whose kernel is $M$ extends to an $A$-algebra homomorphism $\beta : S \to A$, and its kernel contains $P$ since it contains $M$. Let $\beta(s)=0$ and write $s = \sum_{i=1}^n a_i e_i$ in orthogonal form with the $a_i\in A$ distinct and nonzero. Then $se_i = a_ie_i$, so $0=\beta(se_i)=\beta(a_ie_i)=a_i\beta(e_i)$. Since $S$ is torsion-free, this implies $\beta(e_i) = 0$. Therefore, $e_i \in M$, showing that $s \in P$. Thus, $P$ is the kernel of the onto ring homomorphism $\beta:S\to A$, and as $A$ is an integral domain, $P$ is a prime ideal. Since $P \cap A = 0$, by \cite[Lem.~4.5]{BMMO13a} $P$ is a minimal prime.

We wish to show $I = \thd P$. Take $s \in I$, and first suppose $s \ge 0$. There is $t \in S$ with $s \prec t$ and $\alpha(t) = 0$. By Lemma~\ref{compatible}, write $s = a_0 + \sum_{i=1}^n b_i e_i$ and $t = a_0 + \sum_{i=1}^n b_i k_i$ in compatible decreasing form with $b_i > 0$ and $e_i \prec k_i$ for each $i$. By Theorem~\ref{characterization of proximity morphism}, $0 = \alpha(t) = a_0 + \sum_{i=1}^n b_i \sigma(k_i)$. Since $\sigma(k_i) \in {\bf 2}$ and the $b_i$ are positive, this forces $a_0 = 0$ and all $\sigma(k_i) = 0$. Pick $l_i \in B$ with $e_i \prec l_i \prec k_i$. By replacing $l_i$ by $l_1 \wedge \cdots \wedge l_i$ we may assume the $l_i$ form a decreasing set. Let $r = \sum_{i=1}^n b_i l_i$. Then $s\prec r\prec t$ and $l_i \in \ker(\sigma) \subseteq M$. Consequently, $r \in P$, and so $s \in \thd P$. For an arbitrary $s$, the argument we just gave shows $|s| \in \thd P$. Since $P$ is a minimal prime, by Proposition~\ref{double down}(2), $\thd P$ is a round ideal, so $\thd P$ is an $\ell$-ideal, and hence $s \in \thd P$. This implies $I \subseteq \thd P$. For the reverse inclusion, let $s \in \thd P$. Then $|s| \prec t$ for some $t \in P$. To show $s \in I$, it suffices to show $|s| \in I$ since by Proposition~\ref{double down}(1), $I$ is an $\ell$-ideal. Therefore, assume $s \ge 0$. There is $r$ with $s \prec r \prec t$. Write $s = a_0 + \sum_{i=1}^n b_ie_i$, $r = a_0 + \sum_{i=1}^n b_i l_i$, and $t = a_0 + \sum_{i=1}^n b_i k_i$ in compatible decreasing form with $e_i \prec l_i \prec k_i$ for each $i$. Since $0 \le s$, Lemma~\ref{compatible} implies $0 \le a_0$. We show that each $k_i \in M$. As $t\in P$ and $P$ is generated by $M$, we can write $t = \sum_{j=1}^m c_j p_j$ for some $p_j \in M$. Set $c = \sum_{j=1}^m |c_j|$ and $p = p_1 \vee \cdots \vee p_m$. Then $cp \ge t \ge a_0 = a_0 \cdot 1 \ge 0$. By Lemma~\ref{lem:3.9}(6), this forces $a_0 = 0$ or $p = 1$. Since $p \in M$ and $1 \notin M$, we see that $a_0 = 0$. Furthermore, $cp \ge t \ge b_1k_1$. Applying Lemma~\ref{lem:3.9}(6) again yields $p \ge k_1 \ge k_i$ for each $i$. Since $p_j \in M$ for each $j$, we have $p \in M$, so each $k_i \in M$. Now, $l_i \prec k_i$ and $k_i \in M$ yield $l_i \in \thd M = E$. Therefore, $\alpha(r) = 0$, so as $s \prec r$, we obtain $s \in I$. Thus, $I = \thd P$.

(3)$\Rightarrow$(4): Let $I = {\thd} P$ for some minimal prime ideal $P$ of $S$, and set $E = I \cap B$. Then $E$ is an ideal of $B$. Let $M = P \cap B$. By \cite[Prop.~3.11 and Thm.~4.6]{BMMO13a}, $M$ is a maximal ideal of $B$. We show $E = {\thd} M$, which will prove that $E$ is an end of $(B,\prec)$. Let $e \in E$. Then $e \in I$, so there is $s \in P$ with $e \prec s$. An argument similar to the one given in the proof of Proposition~\ref{prop:4.6} gives $k \in M$ with $e \prec k$. This implies $e \in {\thd} M$, which yields $E \subseteq {\thd} M$. For the converse, let $e \in {\thd} M$. Then there is $k \in M$ with $e \prec k$. Therefore, $k \in P$, so $e \in {\thd} P = I$, and hence $e \in I\cap B = E$. This proves $E = {\thd} M$, so $E$ is an end of $(B,\prec)$. We next prove that $E$ generates $I$ as an ideal of $S$. One inclusion is obvious. For the reverse, let $s \in I$. Then there is $t \in P$ with $|s| \prec t$. By \cite[Ch.~XIII, Thm.~7]{Bir79}, $0\le s\vee 0\le |s|\prec t$, so $0\le s\vee 0\prec t$. By writing $s\vee 0$ and $t$ in compatible decreasing form, an argument similar to the one given in the proof of (1)$\Rightarrow$(3) yields that $s \vee 0$ is in the ideal generated by $E$. Since $-s\in I$, we have that $(-s) \vee 0 = -(s \wedge 0)$ is also in this ideal. Because $s = (s\vee 0) + (s \wedge 0)$ (see, e.g., \cite[Ch.~XIII, Thm.~7]{Bir79}), we conclude that $s$ lies in the ideal generated by $E$, and so $I$ is generated by $E$.

(4)$\Rightarrow$(5): This is obvious.

(5)$\Rightarrow$(1): Let $I$ be the ideal generated by an end $E$ of $B$. Then $I$ is the set of all $A$-linear combinations of elements of $E$. Let $M$ be a maximal ideal of $B$ containing $E$. It follows from \cite[Prop.~3.11 and Thm.~4.6]{BMMO13a} that $M = P \cap \func{Id}(S)$ for some minimal prime $P$. Then $I \subseteq P$. Therefore, $I \cap A \subseteq P \cap A = 0$. Let $s \in I$. Then $s$ can be written as $s = \sum_{i=1}^n a_i e_i$ with each $e_i \in E$. There are $k_i \in E$ with $e_i \prec k_i$. If $t = \sum_{i=1}^n |a_i| k_i$, then $t \in I$ and $|s| \le \sum_{i=1}^n |a_i|e_i \prec t$. Therefore, $|s| \prec t$. Thus, $I$ is round. Finally, let $J$ be an end of $(S,\prec)$ with $I \subseteq J$. Then $E = I \cap B \subseteq J \cap B$. Because $J$ is an end and we have already proved (3)$\Rightarrow$(4), $J \cap B$ is an end of $(B,\prec)$ and generates $J$ as an ideal. Maximality shows $E = J\cap B$, so $J$ is the ideal generated by $E$. Thus, $J = I$, and so $I$ is an end of $(S,\prec)$.

(5)$\Rightarrow$(2): Let $E$ be an end of $(B,\prec)$ and suppose $I$ is generated as an ideal by $E$. Let $F = \{ b \in B : \lnot b \in E\}$. As we pointed out in Section 5, $F$ is a maximal round filter of $(B,\prec)$, so $\sigma : B \to {\bf 2}$ that sends the members of $F$ to $1$ and the rest of $B$ to $0$ is a de Vries morphism (see, e.g., \cite[Rem.~4.21]{BH13}). By Corollary~\ref{unique extension of map}, $\sigma$ extends uniquely to a proximity morphism $\alpha : S \to A$. We claim that $I = \ker(\alpha)$. To see this, let $s \in I$. Since $I$ is an $\ell$-deal, $|s| \in I$, so $|s|$ is a linear combination of idempotents from $E$. An argument similar to the one in the proof of (1)$\Rightarrow$(3) showing that $0 \le t \in P$ has all the idempotents in its decreasing form in $M$ yields that $|s|$ can be written in decreasing form $|s| = \sum_{i=1}^n b_i e_i$ with each $b_i > 0$ and the $e_i$ in $E$. Since $E$ is a round ideal of $(B,\prec)$, for each $i$ there is $k_i \in E$ with $e_i \prec k_i$. Set $t = \sum_{i=1}^n b_i k_i$. Then $t \in I$ and $|s| \prec t$. Moreover, $\alpha(t) = \sum_{i=1}^n b_i \sigma(k_i) = 0$. Therefore, $s \in \ker(\alpha)$. Conversely, let $s \in \ker(\alpha)$. Then $|s| \prec t$ for some $t$ with $\alpha(t) = 0$. By Lemma~\ref{compatible}, we may write $|s| = \sum_{i=1}^n b_i e_i$ and $t = \sum_{i=1}^n b_i k_i$ in compatible decreasing form with $e_i \prec k_i$ and all $b_i > 0$. Then $0 = \alpha(t) = \sum_{i=1}^n b_i \sigma(k_i)$. Because $\sigma(k_i) \in \{0,1\}$, the condition $\alpha(t) = 0$ forces $\sigma(k_i) = 0$ for each $i$. Thus, $e_i \in \ker(\sigma) = E$. So, $|s| \in I$, and hence $s \in I$ since $I$ is an $\ell$-ideal. Thus, $I = \ker(\alpha)$.
\end{proof}

\begin{remark}
A natural condition to add to the five equivalent conditions of Theorem~\ref{char of ends} would be that $I={\thd}M$ for some maximal $\ell$-ideal $M$ of $S$. While this condition is not equivalent to the others in general, we show that it is equivalent provided $A$ is \emph{Archimedean}; meaning that for each $a,b\in A$, if $na\le b$ for all $n\in\mathbb N$, then $a\le 0$. Indeed, we show that if $A$ is Archimedean and $S$ is a Specker $A$-algebra, then minimal primes in $S$ coincide with maximal $\ell$-ideals of $S$. First suppose that $P$ is a minimal prime ideal of $S$. By \cite[p.~196]{Sub67}, $P$ is an $\ell$-ideal. By the proof of (1)$\Rightarrow$(3) of Theorem~\ref{char of ends}, $S/P \cong A$. Since $A$ is Archimedean, it is simple as an $\ell$-algebra. Thus, $P$ is maximal as an $\ell$-ideal. Conversely, let $M$ be a maximal $\ell$-ideal. Then $M \cap A = 0$ since this intersection is an $\ell$-ideal of $A$ and $A$ is Archimedean, hence simple as an $\ell$-algebra. A Zorn's lemma argument then yields a prime ideal $P \supseteq M$ with $P \cap A = 0$. Since $P \cap A = 0$, by \cite[Lem.~4.5]{BMMO13a}, $P$ is a minimal prime ideal of $S$. Because it is a proper $\ell$-ideal, maximality of $M$ shows $M = P$. Thus, $M$ is a minimal prime ideal of $S$.
\end{remark}

\begin{definition}
Let $(S,\prec)$ be a proximity Specker $A$-algebra.
\begin{enumerate}
\item Let ${\sf End}(S,\prec)$ be the space of ends of $(S,\prec)$ topologized by the basis $\{U(s): s\in S\}$, where $U(s)=\{ I  : s \in I\}$.
\item Let ${\sf Hom}(S,A)$ be the space of proximity morphisms from $S$ to $A$ topologized by the basis $\{V(s) : s\in S\}$, where $V(s)=\{ \alpha : \exists t \text{ with } |s| \prec t \text{ and } \alpha(t) = 0\}$.
\end{enumerate}
\end{definition}

\begin{remark} \label{rem:zeta}
\begin{enumerate}
\item[]
\item  That the sets $U(s)$, $s \in S$, form a basis for ${\sf End}(S,\prec)$ is a consequence of the following easily verifiable facts: $U(s) = U(|s|)$; $U(|s|) \cap U(|t|) = U(|s| \vee |t|)$; $U(1) = \varnothing$; and $U(0) = {\sf End}(S,\prec)$.
\item The same formulas as in (1) hold for $V(s)$, $s\in S$. That $V(|s|) \cap V(|t|) = V(|s| \vee |t|)$ is a consequence of the following lemma, which is parallel to \cite[Lem.~2.2]{Bez12}.
\end{enumerate}
\end{remark}

\begin{lemma}\label{lem:7.7}
Let $\alpha : S \to T$ be a proximity morphism between proximity Specker $A$-algebras.
\begin{enumerate}
\item If $s \in S$, then $\alpha(s) \le -\alpha(-s)$. Therefore, if $s \prec t$, then $\alpha(s) \prec \alpha(t)$.
\item If $s,t,u,v \in S$ with $s \prec u$ and $t \prec v$, then $\alpha(s \vee t) \prec \alpha(u) \vee \alpha(v)$.
\end{enumerate}
\end{lemma}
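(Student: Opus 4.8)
The plan is to deduce part~(2) from part~(1), so the substance of the argument lies in part~(1), and there in the single inequality $\alpha(s)\le-\alpha(-s)$. Granting this, the final clause of~(1) is immediate: if $s\prec t$, then condition~(3) of Definition~\ref{def:5.1} gives $-\alpha(-s)\prec\alpha(t)$, and then $\alpha(s)\le-\alpha(-s)\prec\alpha(t)$ yields $\alpha(s)\prec\alpha(t)$ by~(P3).

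To establish $\alpha(s)\le-\alpha(-s)$, I would reduce everything to idempotents by means of the formula of Theorem~\ref{characterization of proximity morphism}(2). Write $s$ in decreasing form $s=a_0+\sum_{i=1}^{n}b_ie_i$ with each $b_i>0$ and $1=e_0>e_1>\cdots>e_n>0$, so that $\alpha(s)=a_0+\sum_{i=1}^{n}b_i\alpha(e_i)$. Substituting $e_i=1-\lnot e_i$ and setting $b=\sum_{i=1}^{n}b_i$, one rewrites $-s=-(a_0+b)+\sum_{i=1}^{n}b_i\lnot e_i$; since $\lnot e_n>\cdots>\lnot e_1$ with $1>\lnot e_n$ and $\lnot e_1>0$, this is again in decreasing form, and Theorem~\ref{characterization of proximity morphism}(2) gives $\alpha(-s)=-(a_0+b)+\sum_{i=1}^{n}b_i\alpha(\lnot e_i)$. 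By Proposition~\ref{prop:5.4}, each $\alpha(e_i)$ and $\alpha(\lnot e_i)$ is an idempotent of $T$, and $\alpha(e_i)\alpha(\lnot e_i)=\alpha(e_i)\wedge\alpha(\lnot e_i)=\alpha(e_i\wedge\lnot e_i)=\alpha(0)=0$; hence $\alpha(e_i)+\alpha(\lnot e_i)=\alpha(e_i)\vee\alpha(\lnot e_i)$ is an idempotent, so $\alpha(e_i)+\alpha(\lnot e_i)\le 1$ by Lemma~\ref{lem:3.9}(1). A short sign computation then gives
\[
-\alpha(-s)-\alpha(s)=\sum_{i=1}^{n}b_i\bigl(1-\alpha(\lnot e_i)-\alpha(e_i)\bigr)\ge 0,
\]
each summand being the product of the nonnegative scalar $b_i$ with a nonnegative element of the $\ell$-algebra $T$; thus $\alpha(s)\le-\alpha(-s)$.

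For part~(2), I would apply part~(1) to $s\vee t$. In the lattice-ordered group $S$ we have $-(s\vee t)=(-s)\wedge(-t)$, and $\alpha$ preserves meets (condition~(2) of Definition~\ref{def:5.1}), so $\alpha(-(s\vee t))=\alpha(-s)\wedge\alpha(-t)$ and therefore
\[
\alpha(s\vee t)\le-\alpha(-(s\vee t))=-\bigl(\alpha(-s)\wedge\alpha(-t)\bigr)=\bigl(-\alpha(-s)\bigr)\vee\bigl(-\alpha(-t)\bigr).
\]
On the other hand, $s\prec u$ and $t\prec v$ give $-\alpha(-s)\prec\alpha(u)$ and $-\alpha(-t)\prec\alpha(v)$ by condition~(3) of Definition~\ref{def:5.1}, and since $\prec$ respects finite joins in $T$ (Remark~\ref{consequences of proximity definition}(1)), $\bigl(-\alpha(-s)\bigr)\vee\bigl(-\alpha(-t)\bigr)\prec\alpha(u)\vee\alpha(v)$. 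Combining the two displays with~(P3) yields $\alpha(s\vee t)\prec\alpha(u)\vee\alpha(v)$.

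The step I expect to require the most care is part~(1): one must correctly identify the decreasing form of $-s$---checking that $1>\lnot e_n>\cdots>\lnot e_1>0$ is strictly decreasing and that the constant term is $-(a_0+b)$---so that Theorem~\ref{characterization of proximity morphism}(2) genuinely applies, and then carry out the sign bookkeeping in the first display. Everything afterwards, including all of part~(2), is a short formal manipulation with the proximity-morphism axioms of Definition~\ref{def:5.1}, Remark~\ref{consequences of proximity definition}, and the $\ell$-algebra identities in Lemma~\ref{lem:3.9}.
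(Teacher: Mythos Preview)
Your proof is correct and follows essentially the same route as the paper's: reduce to idempotents via the decreasing-form formula of Theorem~\ref{characterization of proximity morphism}(2), use $\alpha(e_i)+\alpha(\lnot e_i)\le 1$, and then handle part~(2) by the identity $-(s\vee t)=(-s)\wedge(-t)$ together with $\alpha$ preserving meets. Your argument is in fact slightly more streamlined than the paper's: the paper first treats the case $s\ge 0$ and then shifts by a constant to cover arbitrary $s$, whereas you observe (correctly) that the decreasing-form computation goes through for any $s$ without that detour; and where the paper cites the de Vries-morphism inequality $\alpha(e)\le\lnot\alpha(\lnot e)$, you derive the equivalent bound $\alpha(e_i)+\alpha(\lnot e_i)\le 1$ directly from orthogonality and Lemma~\ref{lem:3.9}(1).
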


\begin{proof}
(1) By Proposition~\ref{prop:5.4}, $\alpha|_{\func{Id}(S)}$ is a de Vries morphism. Thus, if $e \in \Id(S)$, then $\alpha(e) \le \lnot \alpha(\lnot e)$. Let $s \in S$. We first assume $s \ge 0$. Write $s = a_0 + \sum_{i=1}^n b_i e_i$ in decreasing form with $b_i > 0$. By Theorem~\ref{characterization of proximity morphism}, $\alpha(s) = a_0 + \sum_{i=1}^n b_i \alpha(e_i)$. Let $b = \sum_{i=1}^n b_i$. As in the proof of Theorem~\ref{induced map on FN}(3), we have
\begin{eqnarray*}
-s &=& -a_0 + \sum_{i=1}^n b_i (-e_i) = -a_0 - b + b + \sum_{i=1}^n b_i (-e_i) \\
&=& (-a_0 -b) + \sum_{i=1}^n b_i(1-e_i) = (-a_0 -b) + \sum_{i=1}^n b_i \lnot e_i.
\end{eqnarray*}
Since $\lnot e_n > \cdots > \lnot e_1$, this writes $-s$ in decreasing form, so $\alpha(-s) = (-a_0 -b) + \sum_{i=1}^n b_i \alpha(\lnot e_i)$. Consequently, $\alpha(-s) = -a_0 + \sum_{i=1}^n b_i(\alpha(\lnot e_i) - 1)$, so $-\alpha(-s) = a_0 + \sum_{i=1}^n b_i (1-\alpha(\lnot e_i)) = a_0 + \sum_{i=1}^n b_i (\lnot\alpha(\lnot e_i))$. Finally, since $\alpha(e_i) \le \lnot\alpha(\lnot e_i)$ for each $i$, we get $\alpha(s) \le -\alpha(-s)$ since all $b_i$ are positive.

For an arbitrary $s\in S$, since $\eta_S(s)$ is a finitely valued function, there is $a \in A$ with $\eta_S(s)+a \ge 0$. This yields $s + a \ge 0$ because $\eta_S$ is an $\ell$-algebra embedding. By the nonnegative case, we have $\alpha(a+s) \le -\alpha(-(a+s))$. This simplifies to $a + \alpha(s) \le a + (-\alpha(-s))$. Consequently, $\alpha(s) \le -\alpha(-s)$. From this we conclude that if $s \prec t$, then $\alpha(s) \le -\alpha(-s) \prec \alpha(t)$, so $\alpha(s) \prec \alpha(t)$.

(2) We have $-\alpha(-s) \prec \alpha(u)$ and $-\alpha(-t) \prec \alpha(v)$, so
\[
-\alpha(-s) \vee -\alpha(-t) \prec \alpha(u) \vee \alpha(v).
\]
But,
\[
-\alpha(-s) \vee -\alpha(-t) = -[\alpha(-s) \wedge \alpha(-t)] = -\alpha(-s \wedge -t) = -\alpha(-(s\vee t)).
\]
Thus, by (1),
\[
\alpha(s \vee t) \le -\alpha(-(s\vee t)) \prec \alpha(u) \vee \alpha(v).
\]
\end{proof}

We are ready to show that for a proximity Specker $A$-algebra $(S,\prec)$, the 1-1 correspondences of Theorem~\ref{char of ends} between the ends of $(S,\prec)$, the ends of $(\Id(S),\prec)$, and the proximity morphisms $S\to A$ extend to the homeomorphisms of the corresponding spaces.

\begin{theorem}\label{homeo of end spaces}
Let $(S,\prec)$ be a proximity Specker $A$-algebra and let $B=\Id(S)$. The spaces ${\sf End}(S,\prec)$ and ${\sf Hom}(S,A)$ are homeomorphic to the compact Hausdorff space $X$ of ends of $(B,\prec)$.
\end{theorem}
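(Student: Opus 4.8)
The plan is to establish two homeomorphisms, $\operatorname{\sf End}(S,\prec)\cong X$ and $\operatorname{\sf Hom}(S,A)\cong X$, by exhibiting explicit bijective continuous maps in each case and then checking that the open bases match up. For the first, recall from Theorem~\ref{char of ends} (the equivalence of (1), (4), (5)) that $I\mapsto I\cap B$ is a bijection from ends of $(S,\prec)$ to ends of $(B,\prec)$, with inverse sending an end $E$ of $B$ to the $S$-ideal it generates. The set of ends of $(B,\prec)$, carrying the $\zeta$-topology, is by definition the space $X$. So I would define $\Phi:\operatorname{\sf End}(S,\prec)\to X$ by $\Phi(I)=I\cap B$ and verify it is a homeomorphism by matching basic open sets: for $e\in B$ one has $\zeta(\lnot e)=\{E : e\notin E\}$ type sets as a basis for $X$, and the key computation is that $\Phi^{-1}(\zeta(\lnot e))=U(e)$, i.e. $e\in I\iff e\in I\cap B$, which is immediate, while for a general $s\in S$, $U(s)=U(|s|)$ and $|s|$ has a decreasing form $\sum b_ie_i$ with $b_i>0$, so $s\in I$ iff each $e_i\in I\cap B$ — giving $U(s)=\bigcap_i\Phi^{-1}(\zeta(\text{-related-to-}e_i))$, hence continuity of $\Phi$; conversely each $\zeta(\lnot e)$ pulls back to $U(e)$, giving continuity of $\Phi^{-1}$. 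Since both spaces are compact Hausdorff (the end space of a de Vries algebra is compact Hausdorff by \cite{deV62}, and $\operatorname{\sf End}(S,\prec)$ is then compact Hausdorff as a continuous bijective image, or one checks directly), a continuous bijection between them is a homeomorphism.

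For the second homeomorphism I would use Theorem~\ref{char of ends}, equivalence of (1) and (2): every end $I$ of $(S,\prec)$ is $\ker(\alpha)$ for a proximity morphism $\alpha:S\to A$, and conversely $\ker(\alpha)$ is an end. To get a genuine bijection $\operatorname{\sf Hom}(S,A)\to\operatorname{\sf End}(S,\prec)$, $\alpha\mapsto\ker(\alpha)$, I need injectivity: if $\ker(\alpha)=\ker(\beta)$ then $\alpha=\beta$. This should follow because a proximity morphism $S\to A$ restricts to a de Vries morphism $B\to\mathbf 2$ (Proposition~\ref{prop:5.4}), which is determined by its kernel (a maximal round ideal of $B$), and $\ker(\alpha)\cap B=\ker(\alpha|_B)$ as shown inside the proof of (2)$\Rightarrow$(3); so $\ker(\alpha)$ determines $\alpha|_B$, and then Corollary~\ref{unique extension of map} says the proximity morphism extending a given de Vries morphism on $B$ is unique, whence $\alpha=\beta$. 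Surjectivity onto $\operatorname{\sf End}(S,\prec)$ is (1)$\Rightarrow$(2). For the topology, note $V(s)=\{\alpha : \exists t,\ |s|\prec t,\ \alpha(t)=0\}=\{\alpha : |s|\in\ker(\alpha)\}=\{\alpha : s\in\ker(\alpha)\}$ (using that $\ker\alpha$ is an $\ell$-ideal, so $s\in\ker\alpha\iff|s|\in\ker\alpha$), which is exactly the preimage of $U(s)$ under $\alpha\mapsto\ker(\alpha)$. Hence the bijection carries the declared basis of $\operatorname{\sf Hom}(S,A)$ onto the basis $\{U(s)\}$ of $\operatorname{\sf End}(S,\prec)$, so it is a homeomorphism, and composing with $\Phi$ gives $\operatorname{\sf Hom}(S,A)\cong X$.

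The main obstacle I anticipate is verifying well-definedness and the basis-matching carefully enough — in particular checking that $V(s)$ as defined really equals $\{\alpha : s\in\ker(\alpha)\}$ and that the $U(s)$ (resp. $V(s)$) genuinely form a basis (the identities in Remark~\ref{rem:zeta}, where $V(|s|)\cap V(|t|)=V(|s|\vee|t|)$ leans on Lemma~\ref{lem:7.7}(2)). One should also confirm that the $\zeta$-topology on the set of ends of $(B,\prec)$ is literally the topology on $X$ used elsewhere in the paper, so that no separate compactness/Hausdorffness argument is needed beyond citing \cite{deV62}. A secondary subtlety is the injectivity of $\alpha\mapsto\ker(\alpha)$: it is worth spelling out that the passage "$\ker(\alpha)\rightsquigarrow\ker(\alpha)\cap B=\ker(\alpha|_B)\rightsquigarrow$ maximal round ideal of $B\rightsquigarrow$ de Vries morphism $B\to\mathbf 2\rightsquigarrow\alpha$ via Corollary~\ref{unique extension of map}" is a chain of bijections, each link of which has already been established in Theorem~\ref{char of ends} and Corollary~\ref{unique extension of map}. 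Once these bookkeeping points are settled, the proof is essentially a diagram of three mutually inverse, basis-preserving bijections among compact Hausdorff spaces.
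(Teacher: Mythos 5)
Your route is the same as the paper's: $I \mapsto I\cap B$ for ${\sf End}(S,\prec)\to X$ and $\alpha\mapsto\ker(\alpha)$ for ${\sf Hom}(S,A)\to{\sf End}(S,\prec)$, with the bases $\{U(s)\}$, $\{V(s)\}$, $\zeta[B]$ matched against each other; the ${\sf Hom}(S,A)$ half is correct as you describe it (and you usefully spell out the injectivity of $\alpha\mapsto\ker(\alpha)$ via restriction to $\Id(S)$ and Corollary~\ref{unique extension of map}, which the paper leaves implicit). However, the one substantive computation in the first half is precisely the step you assert without argument: that $s\in I$ forces the idempotents $e_i$ in its decomposition to lie in $I\cap B$, i.e.\ $U(s)=U(e_1)\cap\cdots\cap U(e_n)$. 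This is not immediate. From $s\in I$ and the $\ell$-ideal property you only get $b_ie_i\in I$, and since $A$ is merely a totally ordered domain you cannot divide by $b_i$ to conclude $e_i\in I$. The paper's proof uses that $I$ is an end: roundness provides $t\in I$ with $|b_ie_i|\prec t$; since $I$ is generated by the end $I\cap B$ (Theorem~\ref{char of ends}), one bounds $t\le bk$ with $k\in I\cap B$ and $b\in A$; then Lemma~\ref{lem:3.9}(6) gives $e_i\le k$, hence $e_i\in I\cap B$. With your decreasing form of $|s|$ there is the further wrinkle that the constant term $a_0$ must first be shown to vanish (it lies in $I\cap A=0$). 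Without an argument of this kind, the ``$s\in I$ iff each $e_i\in I\cap B$'' you rely on -- which is the heart of showing $\Phi$ is an open map -- is unsupported.

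Two smaller slips, neither fatal. First, the roles of the two continuity checks are interchanged in your write-up: $\Phi^{-1}(\zeta(e))=U(e)$ is what gives continuity of $\Phi$, while writing $\Phi(U(s))$ as a finite intersection of sets $\zeta(e_i)$ gives continuity of $\Phi^{-1}$; also no complementation $\lnot e$ enters, since the paper's $X$ consists of maximal round ideals and $\zeta(e)=\{x\in X: e\in x\}$. Second, the claim that ${\sf End}(S,\prec)$ is ``compact Hausdorff as a continuous bijective image'' is not a valid shortcut (continuity of $\Phi$ would transfer compactness from ${\sf End}(S,\prec)$ to $X$, not the other way around), but it is also unnecessary: once both bases are matched in both directions, as in the paper, the homeomorphism follows with no compactness argument.
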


\begin{proof}
The theorem is clear if $S$ is trivial. Suppose that $S$ is nontrivial, and define $\varphi:{\sf End}(S,\prec) \rightarrow X$ by $\varphi(I) = I\cap B$ for each $I \in {\sf End}(S,\prec)$. By Theorem~\ref{char of ends}, $\varphi$ is a well-defined bijection. Since $\varphi^{-1}(\zeta(e)) = U(e)$ for each $e\in B$, we have that $\varphi$ is continuous. To see that $\varphi^{-1}$ is continuous, let $s \in S$ and write $s = \sum_{i=1}^n a_i e_i$ in orthogonal form with each $a_i\ne 0$. We show that $U(s) = U(e_1) \cap \cdots \cap U(e_n)$. One inclusion is clear. For the other inclusion, let $I$ be an end of $(S,\prec)$ and let $s \in I$. Then $se_i = a_ie_i$, so $a_i e_i \in I$. Since $I$ is an end, there is $t \in I$ with $|a_ie_i| \prec t$. By Theorem~\ref{char of ends}, $I\cap B$ is an end of $(B,\prec)$ and $I$ is generated by $I\cap B$. Therefore, we may write $t = \sum_{j=1}^m b_j k_j$ with $k_j \in I\cap B$. If $b = \sum_{j=1}^m |b_j|$ and $k = k_1 \vee \cdots \vee k_m$, then $k \in I\cap B$ and $t \le bk$. Thus, $|a_ie_i| \prec bk$. This implies $|a_i|e_i \le bk$, so by Lemma~\ref{lem:3.9}(6), $|a_i|\le b$ and $e_i\le k$, yielding $e_i \in I\cap B$. Since this is true for each $i$, we conclude that $I\in U(e_1) \cap \cdots \cap U(e_n)$, so $U(s) = U(e_1) \cap \cdots \cap U(e_n)$. Therefore, $\varphi(U(s)) = \varphi(U(e_1)) \cap \cdots \cap \varphi(U(e_n))$. Now, if $e \in B$, then $\varphi(U(e)) = \{ I \cap B : e \in I\} = \zeta(e)$. Thus, $\varphi(U(s))$ is open in $X$. Consequently, $\varphi$ is a homeomorphism.

Next, define $\tau:{\sf Hom}(S,A) \to {\sf End}(S,\prec)$ by $\tau(\alpha)=\ker(\alpha)$. By Theorem~\ref{char of ends}, $\tau$ is a well-defined bijection. It is also easy to see that $\tau^{-1}(U(s))=V(s)$ and $\tau(V(s))=U(s)$. Thus, ${\sf Hom}(S,A)$ and ${\sf End}(S,\prec)$ are homeomorphic.
\end{proof}

Consequently, given a proximity Specker $A$-algebra $(S,\prec)$, we can think of the dual compact Hausdorff space of $(S,\prec)$ as either the space of ends of $(S,\prec)$, the space of ends of $(\Id(S),\prec)$, or the space of proximity morphisms $S\to A$.

\section{Categorical considerations}

In this final section we show that the proximity Baer Specker $A$-algebras and proximity morphisms between them form a category, which we denote by ${\bf PBSp}_A$. Using the results obtained in previous sections, we prove that ${\bf PBSp}_A$ is dually equivalent to {\bf KHaus}, thus providing an analogue of de Vries duality for proximity Baer Specker $A$-algebras. As a consequence, we obtain that ${\bf PBSp}_A$ is equivalent to {\bf DeV}.

\begin{proposition} \label{cor:7.8}
The proximity Baer Specker $A$-algebras and proximity morphisms form a category ${\bf PBSp}_A$ where the composition $\beta\star\alpha$ of two proximity morphisms $\alpha:S_1\to S_2$ and $\beta:S_2\to S_3$ is the unique proximity morphism extending the de Vries morphism $\beta|_{\func{Id}(S_2)} \star \alpha|_{\func{Id}(S_1)}$.
\end{proposition}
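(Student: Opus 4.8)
The plan is to transfer the category structure from $\mathbf{DeV}$ along the assignment $S\mapsto\Id(S)$, $\alpha\mapsto\alpha|_{\Id(S)}$, using the hom-set bijection of Corollary~\ref{unique extension of map}. If $S$ and $T$ are Baer Specker $A$-algebras, then $\Id(S)$ and $\Id(T)$ are complete Boolean algebras \cite[Thm.~4.3]{BMMO13a}, so $(\Id(S),\prec)$ and $(\Id(T),\prec)$ are de Vries algebras by Proposition~\ref{prop:4.6}. By Proposition~\ref{prop:5.4} and Corollary~\ref{unique extension of map}, the map $\alpha\mapsto\alpha|_{\Id(S)}$ is a bijection from the proximity morphisms $S\to T$ onto the de Vries morphisms $\Id(S)\to\Id(T)$, with inverse the unique-extension assignment of Corollary~\ref{unique extension of map}. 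For $\alpha:S_1\to S_2$ and $\beta:S_2\to S_3$ we then take $\beta\star\alpha$ to be the unique proximity morphism extending the $\mathbf{DeV}$-composite $\beta|_{\Id(S_2)}\star\alpha|_{\Id(S_1)}$; since $S_1$ and $S_3$ are Baer, this is a well-defined proximity morphism, and by construction $(\beta\star\alpha)|_{\Id(S_1)}=\beta|_{\Id(S_2)}\star\alpha|_{\Id(S_1)}$. More generally, the same reasoning gives $(\delta\star\varepsilon)|_{\Id}=\delta|_{\Id}\star\varepsilon|_{\Id}$ for any composable pair, a fact I will use repeatedly.

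First I would identify the identities. For a Baer Specker $A$-algebra $S$, the identity map $\mathrm{id}_S$ is a proximity morphism: axioms (1), (2), (5), (6), (7) of Definition~\ref{def:5.1} are immediate, axiom (3) reduces to the tautology $s\prec t\Rightarrow s\prec t$, and axiom (4) is exactly Lemma~\ref{lem:6.1}. Since $\mathrm{id}_S|_{\Id(S)}$ is the identity de Vries morphism of $(\Id(S),\prec)$, which is the identity of $\Id(S)$ in $\mathbf{DeV}$, the uniqueness clause of Corollary~\ref{unique extension of map} shows that $\mathrm{id}_S$ is the only proximity morphism restricting to it. Then for $\alpha:S_1\to S_2$, the morphism $\mathrm{id}_{S_2}\star\alpha$ is, by definition of $\star$ together with the identity laws in $\mathbf{DeV}$, the unique proximity morphism extending $\mathrm{id}_{\Id(S_2)}\star\alpha|_{\Id(S_1)}=\alpha|_{\Id(S_1)}$; as $\alpha$ itself extends $\alpha|_{\Id(S_1)}$, uniqueness forces $\mathrm{id}_{S_2}\star\alpha=\alpha$, and symmetrically $\alpha\star\mathrm{id}_{S_1}=\alpha$.

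Next I would verify associativity. Given $\alpha:S_1\to S_2$, $\beta:S_2\to S_3$, $\gamma:S_3\to S_4$, both $(\gamma\star\beta)\star\alpha$ and $\gamma\star(\beta\star\alpha)$ are proximity morphisms $S_1\to S_4$, so by the uniqueness half of Corollary~\ref{unique extension of map} it suffices to show their restrictions to $\Id(S_1)$ agree. Using $(\delta\star\varepsilon)|_{\Id}=\delta|_{\Id}\star\varepsilon|_{\Id}$, the restriction of $(\gamma\star\beta)\star\alpha$ equals $\bigl(\gamma|_{\Id(S_3)}\star\beta|_{\Id(S_2)}\bigr)\star\alpha|_{\Id(S_1)}$, while the restriction of $\gamma\star(\beta\star\alpha)$ equals $\gamma|_{\Id(S_3)}\star\bigl(\beta|_{\Id(S_2)}\star\alpha|_{\Id(S_1)}\bigr)$; these coincide because composition is associative in $\mathbf{DeV}$. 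Hence the two composites are equal, and $\mathbf{PBSp}_A$ is a category (indeed $S\mapsto\Id(S)$ is then a faithful, full functor $\mathbf{PBSp}_A\to\mathbf{DeV}$). I do not anticipate a real obstacle: the only points requiring care are keeping the two uses of $\star$ (composition in $\mathbf{DeV}$ versus in $\mathbf{PBSp}_A$) straight, and invoking the Baer hypothesis so that $\Id(S)$ is complete and the $\mathbf{DeV}$-composite is defined in the first place.
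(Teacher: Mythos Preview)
Your proposal is correct and follows essentially the same approach as the paper: define $\beta\star\alpha$ as the unique proximity-morphism extension of the $\mathbf{DeV}$-composite $\beta|_{\Id(S_2)}\star\alpha|_{\Id(S_1)}$ via Proposition~\ref{prop:5.4} and Corollary~\ref{unique extension of map}, then deduce associativity from associativity in $\mathbf{DeV}$ together with uniqueness of extensions. Your write-up is in fact more thorough than the paper's (you explicitly verify the identity-morphism axioms via Lemma~\ref{lem:6.1} and check the unit laws, which the paper leaves implicit), but the underlying argument is the same.
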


\begin{proof}
It is easily seen that the identity map on a proximity Baer Specker $A$-algebra is a proximity morphism. By Proposition~\ref{prop:5.4}, if $\alpha$ and $\beta$ are proximity morphisms, then their restrictions to the idempotents are de Vries morphisms. Therefore, $\beta|_{\func{Id}(S_2)}\star\alpha|_{\func{Id}(S_1)}$ is a de Vries morphism. Thus, by Corollary~\ref{unique extension of map}, $\beta \star \alpha$ is a proximity morphism. We show that $\star$ is associative. Suppose that $\alpha_1:S_1\to S_2$, $\alpha_2:S_2\to S_3$, and $\alpha_3:S_3\to S_4$ are proximity morphisms. Since the restrictions of $\alpha_3\star(\alpha_2\star\alpha_1)$ and $(\alpha_3\star\alpha_2)\star\alpha_1$ to the idempotents are de Vries morphisms, we have that they are equal. Applying Corollary~\ref{unique extension of map} again yields $\alpha_3 \star (\alpha_2 \star \alpha_1) = (\alpha_3 \star_2) \star \alpha_1$. Thus, the proximity Baer Specker $A$-algebras with proximity morphisms form a category.
\end{proof}

\begin{remark}
It would seem more natural to first introduce the category ${\bf PSp}_A$ of proximity Specker $A$-algebras, and treat ${\bf PBSp}_A$ as a full subcategory of ${\bf PSp}_A$. Similarly, it would seem more natural to first introduce the category {\bf PBA} of proximity Boolean algebras, and treat {\bf DeV} as a full subcategory of {\bf PBA}. However, if $\alpha : B_1 \to B_2$ and $\beta : B_2 \to B_3$ are proximity morphisms between proximity Boolean algebras, then the formula $(\beta \star \alpha)(a) = \bigvee \{ \beta(\alpha(b)) : b \prec a\}$ need not be well defined because, if $C$ is not complete, then the join may not exist. It is for this reason that we do not talk categorically about proximity Boolean algebras and proximity Specker $A$-algebras.
\end{remark}

Next we show that although proximity morphisms are not $A$-algebra homomorphisms, proximity isomorphisms are $A$-algebra isomorphisms that preserve and reflect proximity. This is parallel to what happens in {\bf DeV} \cite[Ch.~I.5]{deV62}.

\begin{lemma}\label{proximity isomorphism}
Let $(S,\prec),(T,\prec)\in{\bf PBSp}_A$ and let $\alpha : S \to T$ be a proximity morphism. Then $\alpha$ is an isomorphism in ${\bf PBSp}_A$ iff $\alpha$ is an $A$-algebra isomorphism such that $s \prec t$ in $S$ iff $\alpha(s) \prec \alpha(t)$ in $T$.
\end{lemma}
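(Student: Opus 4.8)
The plan is to reduce the statement to the corresponding fact about $\mathbf{DeV}$ by passing to idempotents. The key observation is that, by Proposition~\ref{cor:7.8}, the composition $\star$ in $\mathbf{PBSp}_A$ is defined precisely so that restriction to idempotents $\alpha\mapsto\alpha|_{\func{Id}(S)}$ carries $\star$ to the $\mathbf{DeV}$-composition, and by Corollary~\ref{unique extension of map} this restriction is a bijection on hom-sets. Hence $\alpha$ is an isomorphism in $\mathbf{PBSp}_A$ iff $\alpha|_{\func{Id}(S)}$ is an isomorphism in $\mathbf{DeV}$: if $\beta$ is a $\mathbf{PBSp}_A$-inverse of $\alpha$, restricting $\beta\star\alpha=\func{id}_S$ and $\alpha\star\beta=\func{id}_T$ to idempotents exhibits $\alpha|_{\func{Id}(S)}$ as a $\mathbf{DeV}$-isomorphism; conversely, if $\sigma:\func{Id}(T)\to\func{Id}(S)$ is the $\mathbf{DeV}$-inverse of $\alpha|_{\func{Id}(S)}$, then by Corollary~\ref{unique extension of map} $\sigma$ extends to a unique proximity morphism $\beta:T\to S$, and since $(\beta\star\alpha)|_{\func{Id}(S)}=\sigma\star\alpha|_{\func{Id}(S)}=\func{id}_{\func{Id}(S)}$ while $\func{id}_S$ is a proximity morphism restricting to $\func{id}_{\func{Id}(S)}$, the uniqueness in Corollary~\ref{unique extension of map} forces $\beta\star\alpha=\func{id}_S$, and similarly $\alpha\star\beta=\func{id}_T$. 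Now by \cite[Ch.~I.5]{deV62} a de Vries morphism is an isomorphism in $\mathbf{DeV}$ iff it is a Boolean isomorphism that preserves and reflects $\prec$. So it remains to prove that $\alpha$ is an $A$-algebra isomorphism with $s\prec t\Leftrightarrow\alpha(s)\prec\alpha(t)$ for all $s,t\in S$ iff $\alpha|_{\func{Id}(S)}$ is a Boolean isomorphism that preserves and reflects $\prec$.

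One direction is immediate: if $\alpha$ is an $A$-algebra isomorphism that preserves and reflects $\prec$, then, being a ring isomorphism, it restricts to a Boolean isomorphism $\func{Id}(S)\to\func{Id}(T)$, which preserves and reflects $\prec$ by hypothesis.

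For the converse, assume $\alpha|_{\func{Id}(S)}$ is a Boolean isomorphism preserving and reflecting $\prec$, hence an isomorphism in $\mathbf{DeV}$; by de Vries duality \cite[Ch.~I.6]{deV62} its dual continuous map $\varphi:X\to Y$ is a homeomorphism. By Theorem~\ref{characterization of proximity morphism} (implication (1)$\Rightarrow$(3) and its proof) we have $\eta_T\circ\alpha=\varphi^*\circ\eta_S$ with $\varphi$ the dual of $\alpha|_{\func{Id}(S)}$. Since $S$ and $T$ are Baer, Corollary~\ref{FN algebra char} gives that $\eta_S:S\to FN(Y)$ and $\eta_T:T\to FN(X)$ are $\ell$-algebra isomorphisms that preserve and reflect $\prec$; therefore it suffices to show that $\varphi^*:FN(Y)\to FN(X)$ is an $A$-algebra isomorphism preserving and reflecting $\prec$, for then $\alpha=\eta_T^{-1}\circ\varphi^*\circ\eta_S$ is such a map. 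That $\varphi^*$ preserves and reflects $\prec$ follows from the identity $\varphi^*(f)^{-1}(\up a)=\widehat{\varphi}(f^{-1}(\up a))$ together with the fact that, since $\varphi$ is a homeomorphism, $\widehat{\varphi}:\mathcal{RO}(Y)\to\mathcal{RO}(X)$ is a Boolean isomorphism preserving and reflecting $\prec$.

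The remaining point, which I expect to be the main obstacle, is that $\varphi^*$ is an $A$-algebra isomorphism; this is not automatic, since $\varphi^*$ is built from normalization, which is not compatible with the pointwise operations in general (Remark~\ref{pointwise}). To handle it I would use the direct limit presentation of Theorem~\ref{direct limit theorem}: a homeomorphism $\varphi:X\to Y$ induces an order isomorphism $U\mapsto\varphi^{-1}(U)$ between the directed sets of dense open subsets of $Y$ and of $X$, and for each dense open $U\subseteq Y$ an $A$-algebra isomorphism $FC(U)\to FC(\varphi^{-1}(U))$, $f\mapsto f\circ(\varphi|_{\varphi^{-1}(U)})$ (the pointwise operations are respected because $\varphi|_{\varphi^{-1}(U)}$ is a homeomorphism onto $U$). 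These commute with the restriction maps of the two directed systems, so they induce an $A$-algebra isomorphism $\varinjlim FC(U)\to\varinjlim FC(W)$; and using Proposition~\ref{uniqueext} (uniqueness of the normal extension of a continuous function on a dense open set) one checks that, under the identifications of Theorem~\ref{direct limit theorem}, this direct-limit map is exactly $\varphi^*$. Hence $\varphi^*$ is an $A$-algebra isomorphism, which completes the proof.
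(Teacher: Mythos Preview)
Your proof is correct, but it takes a different route from the paper's. The paper argues both directions more directly and algebraically, without passing through $FN(X)$ or the direct-limit description. For the ``if'' direction, the paper simply notes that an $A$-algebra isomorphism satisfies $-\alpha(-s)=\alpha(s)$, verifies axiom~(4) for both $\alpha$ and $\alpha^{-1}$ using Lemma~\ref{lem:6.1}, and then checks $\alpha^{-1}\star\alpha=\func{id}_S$ via the decreasing-form formula of Theorem~\ref{characterization of proximity morphism}. For the ``only if'' direction, the paper observes that the restrictions to idempotents are inverse Boolean isomorphisms, invokes \cite[Lem.~2.5 and Thm.~2.7]{BMMO13a} to get the unique $A$-algebra extension $\alpha'$ of $\alpha|_{\func{Id}(S)}$, and shows $\alpha'=\alpha$ again via decreasing form; the same for $\beta$, and then $\beta\circ\alpha=\func{id}$ because it is an $A$-algebra endomorphism fixing a generating set.

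Your approach, by contrast, packages the categorical reduction neatly: restriction to idempotents is a bijection on hom-sets (Corollary~\ref{unique extension of map}) compatible with $\star$, so being an isomorphism in $\mathbf{PBSp}_A$ is equivalent to being one in $\mathbf{DeV}$; then you reduce the algebraic characterization to the corresponding statement about $\varphi^*$ via the embeddings $\eta_S,\eta_T$. This is a perfectly valid and perhaps more conceptual route. One remark: the step you flagged as ``the main obstacle'' is actually not an obstacle at all. When $\varphi$ is a homeomorphism, $\varphi^{-1}$ carries regular opens to regular opens, so for $f\in FN(Y)$ the function $f\circ\varphi$ is already normal; hence $\varphi^*(f)=(f\circ\varphi)^\#=f\circ\varphi=\varphi^+(f)$, and $\varphi^+$ is visibly an $A$-algebra isomorphism with inverse $(\varphi^{-1})^+$. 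Your direct-limit argument works, but it is heavier machinery than needed here.
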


\begin{proof}
First suppose that $\alpha : S \to T$ is an $A$-algebra isomorphism such that $s \prec t$ iff ${\alpha(s) \prec \alpha(t)}$. Since $\alpha$ is an $A$-algebra homomorphism, $-\alpha(-s) = \alpha(s)$. As $s \prec t$ implies $\alpha(s) \prec \alpha(t)$, we obtain that $s\prec t$ implies $-\alpha(-s)\prec \alpha(t)$. Consequently, to see that $\alpha$ is a proximity morphism, we only need to check that $\alpha(t)$ is the least upper bound of $\{\alpha(s) : s \prec t\}$. By \cite[Cor.~5.3]{BMMO13a}, $\alpha$ is an $\ell$-algebra isomorphism, so $\alpha$ is order preserving. Therefore, $\alpha(t)$ is an upper bound of $\{\alpha(s) : s \prec t \}$. Let $r$ be an upper bound of this set. Then $\alpha(s) \le r$ for all $s$ with $s \prec t$. As $\alpha$ is an $A$-algebra isomorphism, so is $\alpha^{-1}$. Therefore, $\alpha^{-1}$ is order preserving, and $\alpha(s) \le r$ implies $s \le \alpha^{-1}(r)$. By Lemma~\ref{lem:6.1}, $t$ is the least upper bound of all elements proximal to it. Thus, $t \le \alpha^{-1}(r)$, and so $\alpha(t) \le r$. This proves that $\alpha(t)$ is the least upper bound of $\{\alpha(s) : s \prec t\}$. It follows that $\alpha$ is a proximity morphism. The same argument shows that $\alpha^{-1}$ is a proximity morphism. To see that $\alpha^{-1} \star \alpha = \func{id}_S$, let $s\in S$ and write $s = a_0 + \sum_{i=1}^n b_ie_i$ in decreasing form. Then, using Theorem~\ref{characterization of proximity morphism} and the fact that the restriction of $\alpha^{-1}\star\alpha$ to the idempotents is the identity on the idempotents, we obtain $(\alpha^{-1}\star\alpha)(s) = a_0 + \sum_{i=1}^n b_i(\alpha^{-1}\star\alpha)(e_i) = a_0 + \sum_{i=1}^n b_ie_i = s$. Therefore, $\alpha^{-1} \star \alpha = \func{id}_S$. A similar argument shows $\alpha \star \alpha^{-1} = \func{id}_T$. Thus, $\alpha$ is a proximity isomorphism.

Next suppose that $\alpha: S \to T$ is a proximity isomorphism. Then there is a proximity isomorphism $\beta : T \to S$ such that $\beta \star \alpha$ is the identity on $S$ and $\alpha \star \beta$ is the identity on $T$. By the definition of $\star$, we see that the restrictions of $\alpha$ and $\beta$ to the idempotents are inverse de Vries isomorphisms, so $\alpha|_{\func{Id}(S)}$ and $\beta|_{\func{Id}(T)}$ are inverse Boolean isomorphisms. By \cite[Lem.~2.5 and Thm.~2.7]{BMMO13a}, there is a unique $A$-algebra homomorphism $\alpha^\prime : S \to T$ extending $\alpha|_{\func{Id}(S)}$. We show that $\alpha^\prime = \alpha$. Let $s\in S$ and write $s = a_0 + \sum_{i=1}^n b_i e_i$ in decreasing form. By Theorem~\ref{characterization of proximity morphism}, $\alpha(s) = a_0 + \sum_{i=1}^n b_i \alpha(e_i)$. Since $\alpha^\prime$ is an $A$-algebra homomorphism, we also have $\alpha^\prime(s) = a_0 + \sum_{i=1}^n b_i \alpha(e_i)$. Thus, $\alpha^\prime=\alpha$, and so $\alpha$ is an $A$-algebra homomorphism. By the same reasoning, $\beta$ is an $A$-algebra homomorphism. It follows that $\beta\circ\alpha$ is the identity on $S$ because it is an $A$-algebra endomorphism which is the identity on $\func{Id}(S)$, a generating set of $S$ as an $A$-algebra. Similarly, $\alpha\circ\beta$ is the identity on $T$. Consequently, $\alpha$ is an $A$-algebra isomorphism whose inverse is $\beta$, and $s \prec t$ iff $\alpha(s) \prec \alpha(t)$ because both $\alpha$ and $\beta$ preserve proximity.
\end{proof}

Next we construct contravariant functors $(-)_*:{\bf PBSp}_A \rightarrow {\bf KHaus}$ and $(-)^*:{\bf KHaus} \rightarrow {\bf PBSp}_A$ that yield a dual equivalence of ${\bf PBSp}_A$ and {\bf KHaus}.

Define a contravariant functor $(-)^*:{\bf KHaus}\to{\bf PBSp}_A$ as follows. For $X\in{\bf KHaus}$, let $X^*=(FN(X,A),\prec_X)$ be the de Vries power of $A$ by $X$; and for a continuous map $\varphi:X\to Y$, let $\varphi^*:FN(Y)\to FN(X)$ be the proximity morphism given by $\varphi^*(f)=(f\circ\varphi)^\#$. By Theorem~\ref{FN algebra} and Proposition~\ref{characterization of proximity morphism}, $(-)^*:{\bf KHaus}\to{\bf PBSp}_A$ is a well-defined contravariant functor.

Define a contravariant functor $(-)_*:{\bf PBSp}_A\to{\bf KHaus}$ as follows. For $(S,\prec) \in {\bf PBSp}_A$, let $S_*$ be the space of ends of $(S,\prec)$. By Theorem~\ref{homeo of end spaces}, $S_*\in{\bf KHaus}$. For a proximity morphism $\alpha : S \to T$, let $\alpha_* : T_* \to S_*$ be given by $\alpha_*(I) = {\thd} \alpha^{-1}(I)$. That $\alpha_*$ is a well-defined continuous map is proved in the next lemma.

\begin{lemma} \label{lem:8.15}
Let $\alpha : S \to T$ be a proximity morphism between proximity Specker $A$-algebras $(S,\prec)$ and $(T,\prec)$. Define $\alpha_* : T_* \to S_*$ by $\alpha_*(I) = {\thd} \alpha^{-1}(I)$. Then $\alpha_*$ is a well-defined continuous map.
\end{lemma}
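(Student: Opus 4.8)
The plan is to reduce everything to the already-developed machinery of ends and de Vries duality. The statement has two parts: $\alpha_*$ is well-defined (i.e.\ $\thd\alpha^{-1}(I)$ is an end of $(S,\prec)$ whenever $I$ is an end of $(T,\prec)$), and $\alpha_*$ is continuous. If $T$ is trivial, every statement is vacuous, so I would assume $T$ (hence $S$, since $\alpha(1)=1$) is nontrivial.

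For well-definedness, I would argue as follows. Let $I$ be an end of $(T,\prec)$. By Theorem~\ref{char of ends}, $I$ is an $\ell$-ideal with $I\cap A=0$, so Lemma~\ref{kernel is round} applies verbatim to give that $J:=\thd\alpha^{-1}(I)$ is a proper round ideal of $S$. It remains to show $J$ is \emph{maximal} among proper round ideals, i.e.\ an end. The cleanest route is to pass to idempotents: set $\sigma=\alpha|_{\Id(S)}\colon\Id(S)\to\Id(T)$, a de Vries morphism by Proposition~\ref{prop:5.4}, and set $E=I\cap\Id(T)$, which is an end of $(\Id(T),\prec)$ by Theorem~\ref{char of ends}. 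One shows that $J\cap\Id(S)=\thd\sigma^{-1}(E)$, exactly by the argument already used in the proof of (2)$\Rightarrow$(3) of Theorem~\ref{char of ends} (if $e\in\Id(S)$ with $e\prec s$ and $\alpha(s)\in I$, then one finds $k\in\Id(S)$ with $e\prec k$ and $\sigma(k)\in E$, using that $\alpha$ restricts to a de Vries morphism on idempotents and that $E$ is an $\ell$-ideal of $\Id(T)$, argued as in Proposition~\ref{prop:4.6}). Now $\thd\sigma^{-1}(E)$ is the image of $E$ under the dual map of the de Vries morphism $\sigma$, which on the level of ends/maximal round ideals of Boolean algebras is well-defined and lands in the set of ends (this is exactly de Vries' $\widehat{(\cdot)}$ on the spectral side, \cite[Ch.~I.6]{deV62}); hence $J\cap\Id(S)$ is an end of $(\Id(S),\prec)$. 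Finally, by Theorem~\ref{char of ends} the ideal of $S$ generated by an end of $(\Id(S),\prec)$ is an end of $(S,\prec)$, and one checks $J$ equals the ideal generated by $J\cap\Id(S)$ (the inclusion $\supseteq$ is clear; for $\subseteq$, given $s\in J$ write $|s|\prec t$ with $\alpha(t)\in I$, put $s,t$ in compatible decreasing form via Lemma~\ref{compatible}, and run the decreasing-form argument from the proof of (1)$\Rightarrow$(3) of Theorem~\ref{char of ends}). Therefore $J$ is an end of $(S,\prec)$, so $\alpha_*$ is well-defined.

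For continuity, I would use the homeomorphisms of Theorem~\ref{homeo of end spaces}. Under the identification $\varphi_S\colon{\sf End}(S,\prec)\to X_S$, $I\mapsto I\cap\Id(S)$ (where $X_S$ is the space of ends of $\Id(S)$), and similarly for $T$, the map $\alpha_*$ corresponds to the de Vries dual $\widehat{\sigma}\colon X_T\to X_S$ of $\sigma=\alpha|_{\Id(S)}$, by the computation $(\alpha_*(I))\cap\Id(S)=\thd\sigma^{-1}(I\cap\Id(T))$ from the previous paragraph. Since $\widehat{\sigma}$ is continuous by de Vries duality \cite[Ch.~I.6]{deV62} and $\varphi_S,\varphi_T$ are homeomorphisms, $\alpha_*$ is continuous. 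Alternatively, and more self-containedly, one checks directly that for $e\in\Id(S)$ the preimage $\alpha_*^{-1}(U(e))$ is open: $\alpha_*(I)\in U(e)$ iff $e\in\thd\alpha^{-1}(I)$ iff there is $k\in\Id(S)$ with $e\prec k$ and $\sigma(k)\in I\cap\Id(T)$, i.e.\ $k\in I$; so $\alpha_*^{-1}(U(e))=\bigcup\{U(\sigma(k)):k\in\Id(S),\ e\prec k\}$, a union of basic opens. Since the $U(e)$, $e\in\Id(S)$, form a basis for $S_*$ (Remark~\ref{rem:zeta} and the proof of Theorem~\ref{homeo of end spaces}, where $U(s)=U(e_1)\cap\cdots\cap U(e_n)$ for $s=\sum a_ie_i$ in orthogonal form), this proves $\alpha_*$ continuous.

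The main obstacle is the well-definedness step — specifically, showing that $J=\thd\alpha^{-1}(I)$ is genuinely maximal among proper round ideals rather than merely a proper round ideal. The bookkeeping there (passing to idempotents, identifying $J\cap\Id(S)$ with a de Vries-dual image, and recovering $J$ as the ideal it generates) reuses three separate sub-arguments already embedded inside the long proof of Theorem~\ref{char of ends}, so the real work is citing those correctly rather than any genuinely new estimate; continuity is then essentially formal once Theorem~\ref{homeo of end spaces} is in hand.
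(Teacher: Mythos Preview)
Your proposal is correct and rests on the same idempotent-level reduction as the paper, but the organization differs in both halves. For well-definedness, the paper argues by containment: it takes an arbitrary end $J'\supseteq \thd\alpha^{-1}(I)$, uses Theorem~\ref{char of ends} on the \emph{end} $J'$ to know $J'$ is generated by $J'\cap\Id(S)$, shows $J'\cap\Id(S)=\thd\sigma^{-1}(E)\subseteq\thd\alpha^{-1}(I)$, and concludes $J'=\thd\alpha^{-1}(I)$. You instead verify directly that $J=\thd\alpha^{-1}(I)$ satisfies condition~(5) of Theorem~\ref{char of ends}, which forces you to prove that $J$ itself is generated by $J\cap\Id(S)$; this is true but requires re-running the decreasing-form bookkeeping, whereas the paper's sandwich avoids that step entirely by applying Theorem~\ref{char of ends} only to the object already known to be an end. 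For continuity, the paper gives the one-line formula $\alpha_*^{-1}(U(s))=\bigcup\{U(\alpha(t)):|s|\prec t\}$ valid for arbitrary $s$; your first route (transport through the homeomorphisms of Theorem~\ref{homeo of end spaces} to the de Vries dual $\widehat{\sigma}$) is more conceptual and arguably cleaner, while your second route is the idempotent special case of the paper's formula. Both of your approaches are sound; the paper's choices are marginally more economical on the well-definedness side.
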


\begin{proof}
If $T$ is trivial, then there is nothing to prove. Suppose that $T$ is nontrivial. Let $I$ be an end of $T$. By Lemma~\ref{kernel is round}, ${\thd} \alpha^{-1}(I)$ is a proper round ideal of $S$. To see that ${\thd} \alpha^{-1}(I)$ is an end, let $J$ be an end of $S$ containing ${\thd} \alpha^{-1}(I)$. By Theorem~\ref{char of ends}, $J\cap\Id(S)$ is an end of $\Id(S)$ and $J$ is generated by $J\cap\Id(S)$. The same reasoning gives $I\cap\Id(T)$ is an end of $\Id(T)$ and $I$ is generated by $I\cap\Id(T)$. We have
$$
\left({\thd} \alpha^{-1}(I\cap\Id(T)) \right)\cap\Id(S)\subseteq {\thd} \alpha^{-1}(I)\cap\Id(S) \subseteq J\cap\Id(S).
$$
By Proposition~\ref{prop:5.4}, the restriction of $\alpha$ to $\Id(S)$ is a de Vries morphism. Therefore, ${\thd} \alpha^{-1}(I\cap\Id(T))\cap\Id(S)$ is an end of $\Id(S)$, and so ${\thd} \alpha^{-1}(I\cap\Id(T))\cap\Id(S)=J\cap\Id(S)$. This implies that $J$ is generated by ${\thd} \alpha^{-1}(I)\cap\Id(S)$. Thus, $J\subseteq\alpha^{-1}(I)$, yielding that $\alpha_*(I) = {\thd} \alpha^{-1}(I)$ is an end.

To show that $\alpha_*$ is continuous, it is sufficient to see that $\alpha_*^{-1}(U(s)) = \bigcup \{ U(\alpha(t)) : |s| \prec t\}$. Indeed, $I \in \alpha_*^{-1}(U(s))$ iff $s \in {\thd} \alpha^{-1}(I)$, which happens iff there is $t$ with $|s| \prec t$ and $\alpha(t) \in I$, which is true iff $I \in \bigcup\{U(\alpha(t)) : |s| \prec t\}$.
\end{proof}

This implies that $(-)_*:{\bf PBSp}_A\to{\bf KHaus}$ is a well-defined contravariant functor.

\begin{theorem}\label{PBSp and KHaus}
The functors ${(-)}_*$ and ${(-)}^*$ yield a dual equivalence of ${\bf PBSp}_A$ and {\bf KHaus}.
\end{theorem}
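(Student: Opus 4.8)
The plan is to produce natural isomorphisms $\eta:\func{id}_{{\bf PBSp}_A}\Rightarrow(-)^*\circ(-)_*$ and $\varepsilon:\func{id}_{{\bf KHaus}}\Rightarrow(-)_*\circ(-)^*$; since $(-)_*$ and $(-)^*$ are contravariant, both composites are covariant endofunctors, and that they are well defined has already been arranged (Theorem~\ref{FN algebra}, Theorem~\ref{characterization of proximity morphism}, Theorem~\ref{homeo of end spaces}, Lemma~\ref{lem:8.15}). For the component $\eta_S:S\to(S_*)^*=FN(S_*)$, I would take the $\ell$-algebra embedding $S\to FN(X)$ of Theorem~\ref{rep}, where $X$ is the space of ends of $(\Id(S),\prec)$, and compose it with the homeomorphism of Theorem~\ref{homeo of end spaces} identifying $X$ with $S_*$. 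Because $S$ is Baer, Corollary~\ref{FN algebra char} shows $\eta_S$ is an $\ell$-algebra (hence $A$-algebra) isomorphism that preserves and reflects $\prec$, so by Lemma~\ref{proximity isomorphism} it is an isomorphism in ${\bf PBSp}_A$.

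Naturality of $\eta$ is the heart of the matter. Given a proximity morphism $\alpha:S\to T$, one must check $(\alpha_*)^*\star\eta_S=\eta_T\star\alpha$. Here both $\star$'s collapse to ordinary composition: $\eta_S|_{\Id(S)}$ and $\eta_T|_{\Id(T)}$ are complete Boolean isomorphisms, the $\star$ of two de Vries morphisms reduces to function composition whenever one factor comes from a complete Boolean homomorphism (by (M4)), and $\eta_T\circ\alpha$ and $(\alpha_*)^*\circ\eta_S$ are themselves proximity morphisms (composing a proximity morphism on either side with an order-preserving, proximity-preserving-and-reflecting $A$-algebra isomorphism preserves all the axioms of Definition~\ref{def:5.1}), so uniqueness of the extension in Corollary~\ref{unique extension of map} forces $\star$ to agree with $\circ$. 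Thus the square to verify is $(\alpha_*)^*\circ\eta_S=\eta_T\circ\alpha$, which is exactly the commuting square of Theorem~\ref{characterization of proximity morphism}(3) once we identify the continuous map $\varphi$ occurring there with $\alpha_*$. In that theorem $\varphi$ is the de Vries dual of $\alpha|_{\Id(S)}$ acting on maximal round ideals; and the computation in the proof of Lemma~\ref{lem:8.15} shows $\thd\alpha^{-1}(I)\cap\Id(S)=\thd(\alpha|_{\Id(S)})^{-1}(I\cap\Id(T))$, so under the identifications of Theorem~\ref{homeo of end spaces} the map $\alpha_*(I)=\thd\alpha^{-1}(I)$ is precisely that de Vries dual. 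Hence $\varphi=\alpha_*$, the square commutes, and $\eta$ is a natural isomorphism.

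For $\varepsilon$, observe that for $X\in{\bf KHaus}$ the space $(X^*)_*$ of ends of $(FN(X),\prec_X)$ is homeomorphic, via Theorem~\ref{homeo of end spaces}, to the space of ends of $(\Id(FN(X)),\prec)$, which by Lemma~\ref{idempotents of FN} is the space of ends of the de Vries algebra $(\mathcal{RO}(X),\prec)$, and by de Vries duality this is canonically homeomorphic to $X$, the homeomorphism sending $x\in X$ to the maximal round ideal $\{U\in\mathcal{RO}(X):x\notin\cl(U)\}$ \cite{deV62}. Composing these three homeomorphisms gives $\varepsilon_X:X\to(X^*)_*$, and each constituent is natural in $X$ (the first two by their constructions, the third being the unit of de Vries duality), so $\varepsilon$ is a natural isomorphism. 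Together $\eta$ and $\varepsilon$ show that $(-)^*$ and $(-)_*$ yield a dual equivalence of ${\bf PBSp}_A$ and ${\bf KHaus}$; composing with de Vries duality then also yields the equivalence of ${\bf PBSp}_A$ with ${\bf DeV}$.

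The step I expect to be the main obstacle is the naturality of $\eta$: concretely, reconciling the intrinsically defined map $\alpha_*=\thd\alpha^{-1}(-)$ on ends of Specker algebras with the de Vries dual of $\alpha|_{\Id(S)}$ built into Theorem~\ref{characterization of proximity morphism}, which requires threading the homeomorphisms of Theorem~\ref{homeo of end spaces} through both pictures while keeping track of the fact that composition in ${\bf PBSp}_A$ is $\star$ rather than $\circ$ (harmless here only because the vertical legs of the naturality squares are isomorphisms). Everything else is bookkeeping on top of Corollary~\ref{FN algebra char}, Lemma~\ref{proximity isomorphism}, Theorem~\ref{characterization of proximity morphism}, Theorem~\ref{homeo of end spaces}, Lemma~\ref{idempotents of FN}, and classical de Vries duality.
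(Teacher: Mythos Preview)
Your proof is correct and uses the same ingredients as the paper's, but the organization differs in one respect worth noting. The paper first establishes a contravariant adjunction by invoking the hom-set bijection $\hom_{{\bf PBSp}_A}(S,X^*)\simeq\hom_{{\bf KHaus}}(X,S_*)$ from Corollary~\ref{1-1 correspondence}, observes that naturality of this bijection follows from the way it was constructed, and then checks that the unit and counit are isomorphisms using exactly the results you list (Theorem~\ref{rep}, Corollary~\ref{FN algebra char}, Lemma~\ref{proximity isomorphism}, Theorem~\ref{homeo of end spaces}, Lemma~\ref{idempotents of FN}, and de Vries duality). You instead construct the natural isomorphisms $\eta$ and $\varepsilon$ directly, which obliges you to verify naturality by hand; your reduction of the naturality square for $\eta$ to the commuting square of Theorem~\ref{characterization of proximity morphism}(3), together with your identification of $\alpha_*$ with the de Vries dual of $\alpha|_{\Id(S)}$ via the computation in Lemma~\ref{lem:8.15} and the homeomorphisms of Theorem~\ref{homeo of end spaces}, carries this out correctly (and your observation that $\star$ collapses to $\circ$ when one factor is a proximity isomorphism is sound). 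The paper's route is shorter because the adjunction framework absorbs the naturality bookkeeping into Corollary~\ref{1-1 correspondence}; your route is more explicit and makes the role of Theorem~\ref{characterization of proximity morphism}(3) in linking the two functors more visible.
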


\begin{proof}
By Corollary~\ref{1-1 correspondence}, for $(S,\prec)\in{\bf PBSp}_A$ and $X\in{\bf KHaus}$, we have $\hom_{{\bf PBSp}_A}(S,X^*)\simeq\hom_{{\bf KHaus}}(X,S_*)$. It follows from the proof of Corollary~\ref{1-1 correspondence} that the bijection is natural. Therefore, ${(-)}_*$ and ${(-)}^*$ define a contravariant adjunction between ${\bf PBSp}_A$ and {\bf KHaus}. Let $(S,\prec)\in{\bf PBSp}_A$ and let $X$ be the space of ends of $(\Id(S),\prec)$. By Theorem~\ref{homeo of end spaces}, $S_*$ is homeomorphic to $X$, so $(S_*)^*$ is isomorphic to $(FN(X),\prec)$. By Corollary~\ref{FN algebra char} and Lemma~\ref{proximity isomorphism}, the $\ell$-algebra embedding $\eta:S\to FN(X)$ of Theorem~\ref{rep} is an isomorphism in ${\bf PBSp}_A$. Thus, the unit of the contravariant adjunction is an isomorphism.

Let $X\in{\bf KHaus}$. By Theorem~\ref{FN algebra}, $X^*$ is a proximity Baer Specker $A$-algebra. By Lemma~\ref{idempotents of FN}, $\mathcal{RO}(X)$ is isomorphic to $\Id(FN(X))$. Therefore, by Theorem~\ref{homeo of end spaces}, $(X^*)_*$ is homeomorphic to the space of ends of $\mathcal{RO}(X)$. By de Vries duality, $X$ is homeomorphic to the space of ends of $\mathcal{RO}(X)$, so $X$ is homeomorphic to $(X^*)_*$. Thus, the counit of the contravariant adjunction is an isomorphism. Consequently, ${\bf PBSp}_A$ is dually equivalent to {\bf KHaus}.
\end{proof}

\begin{corollary}\label{equivalence}
${\bf PBSp}_A$ is equivalent to {\bf DeV}.
\end{corollary}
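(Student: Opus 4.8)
The plan is to obtain the equivalence in one line from what has already been proved, and then, if desired, to spell out the concrete functor realizing it. Abstractly, Theorem~\ref{PBSp and KHaus} gives a dual equivalence ${\bf PBSp}_A\simeq{\bf KHaus}$, while de Vries duality \cite{deV62} gives a dual equivalence ${\bf DeV}\simeq{\bf KHaus}$; composing the functor $(-)_*:{\bf PBSp}_A\to{\bf KHaus}$ with a quasi-inverse of the de Vries functor ${\bf DeV}\to{\bf KHaus}$ produces a functor ${\bf PBSp}_A\to{\bf DeV}$ which is an equivalence, since the composite of two contravariant equivalences is a (covariant) equivalence. So the corollary is essentially immediate.

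To make the equivalence explicit, I would exhibit the \emph{idempotent functor} $\mathrm{I}:{\bf PBSp}_A\to{\bf DeV}$ sending $(S,\prec)$ to $(\func{Id}(S),\prec)$ and a proximity morphism $\alpha$ to its restriction $\alpha|_{\func{Id}(S)}$. This is well defined on objects: since $S$ is Baer, $\func{Id}(S)$ is a complete Boolean algebra by \cite[Thm.~4.3]{BMMO13a}, and by Proposition~\ref{prop:4.6} the relation $\prec$ restricts to a proximity on $\func{Id}(S)$, so $(\func{Id}(S),\prec)$ is a de Vries algebra; and it is well defined on morphisms by Proposition~\ref{prop:5.4}. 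Functoriality is built in: composition in ${\bf PBSp}_A$ (Proposition~\ref{cor:7.8}) was defined so that $(\beta\star\alpha)|_{\func{Id}}=\beta|_{\func{Id}}\star\alpha|_{\func{Id}}$, and identities clearly restrict to identities.

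It then remains to check that $\mathrm{I}$ is full, faithful, and essentially surjective. Full and faithful follow from Corollary~\ref{unique extension of map}: each de Vries morphism $\func{Id}(S)\to\func{Id}(T)$ has a unique extension to a proximity morphism $S\to T$, so $\alpha\mapsto\alpha|_{\func{Id}(S)}$ is a bijection from $\hom_{{\bf PBSp}_A}(S,T)$ onto $\hom_{{\bf DeV}}(\func{Id}(S),\func{Id}(T))$. For essential surjectivity, given $(B,\prec)\in{\bf DeV}$ with dual compact Hausdorff space $X$, the de Vries power $(FN(X),\prec_X)$ lies in ${\bf PBSp}_A$ by Theorem~\ref{FN algebra}, and Lemma~\ref{idempotents of FN} together with de Vries duality yields de Vries isomorphisms $(\func{Id}(FN(X)),\prec)\cong(\mathcal{RO}(X),\prec)\cong(B,\prec)$, so $\mathrm{I}(FN(X),\prec_X)\cong(B,\prec)$. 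Hence $\mathrm{I}$ is an equivalence. I do not expect any genuine obstacle here: the one point needing care — compatibility of $\mathrm{I}$ with composition — is precisely how the composition law on ${\bf PBSp}_A$ was set up (Proposition~\ref{cor:7.8}), so the result follows once the earlier theorems are in hand.
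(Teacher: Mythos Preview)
Your proposal is correct and matches the paper's approach: the paper's proof is exactly the one-line composition of Theorem~\ref{PBSp and KHaus} with de Vries duality, and your explicit description via the idempotent functor $\mathrm{I}$ is precisely what the paper outlines in the remark immediately following the corollary.
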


\begin{proof}
Combine Theorem~\ref{PBSp and KHaus} with de Vries duality.
\end{proof}

\begin{remark}
The equivalence of ${\bf PBSp}_A$ and {\bf DeV} can be established directly through the covariant functors $\mathcal{I}:{\bf PBSp}_A\to{\bf DeV}$ and $\mathcal{S}:{\bf DeV} \to {\bf PBSp}_A$, where $\mathcal{I}$ sends a proximity Baer Specker $A$-algebra $(S,\prec)$ to the de Vries algebra of idempotents of $(S,\prec)$, while $\mathcal{S}$ sends a de Vries algebra $(B,\prec)$ to the de Vries power of $A$ by $(B,\prec)$.
\end{remark}

\begin{remark}\label{rem:8.8}
Following \cite[Def.~4.5]{Bez10}, we call a de Vries algebra $(B,\prec)$ \emph{zero-dimensional} provided $a \prec b$ implies that there is $c \in B$ with $c \prec c$ and $a \prec c \prec b$. Let ${\bf zDeV}$ be the full subcategory of {\bf DeV} whose objects are zero-dimensional de Vries algebras, and let {\bf Stone} be the full subcategory of {\bf KHaus} whose objects are Stone spaces (zero-dimensional compact Hausdorff spaces). By \cite[Thm.~4.12]{Bez10}, ${\bf zDeV}$ is dually equivalent to ${\bf Stone}$.

Analogously, we call a proximity Specker $A$-algebra $(S,\prec)$ \emph{zero-dimensional} provided $s \prec t$ implies that there is $r \in S$ with $r \prec r$ and $s \prec r \prec t$. Let ${\bf zPBSp}_A$ be the full subcategory of ${\bf PBSp}_A$ of zero-dimensional proximity Baer Specker $A$-algebras. It is a consequence of Theorem~\ref{PBSp and KHaus}, Corollary~\ref{equivalence}, and \cite[Thm.~4.12]{Bez10} that ${\bf zPBSp_A}$ is equivalent to ${\bf zDeV}$ and is dually equivalent to ${\bf Stone}$. Thus, by \cite[Thm.~4.9]{Bez10}, ${\bf zPBSp_A}$ is a coreflective subcategory of ${\bf BPSp_A}$.
\end{remark}

\begin{remark}
Following \cite[Sec.~5]{Bez10}, we call a de Vries algebra $(B,\prec)$ \emph{extremally disconnected} provided $a \prec b$ iff $a\le b$. Let ${\bf eDeV}$ be the full subcategory of {\bf DeV} whose objects are extremally disconnected de Vries algebras, and let {\bf ED} be the full subcategory of {\bf KHaus} whose objects are extremally disconnected compact Hausdorff spaces. Then ${\bf eDeV}$ is a full subcategory of {\bf zDeV}, {\bf ED} is a full subcategory of {\bf Stone}, ${\bf eDeV}$ is isomorphic to the category {\bf cBA} of complete Boolean algebras and Boolean homomorphisms, and {\bf eDeV} is dually equivalent to ${\bf ED}$ \cite[Sec.~6.2]{Bez10}.

Analogously, we call a proximity Specker $A$-algebra $(S,\prec)$ \emph{extremally disconnected} provided $s \prec t$ iff $s \le t$. Let ${\bf ePBSp}_A$ be the full subcategory of ${\bf PBSp}_A$ of extremally disconnected proximity Baer Specker $A$-algebras. Then ${\bf ePBSp}_A$ is a full subcategory of ${\bf zPBSp}_A$ and is isomorphic to the category ${\bf BSp}_A$ of Baer Specker $A$-algebras and $A$-algebra homomorphisms. Thus, by \cite[Thm.~4.7]{BMMO13a}, ${\bf ePBSp}_A$ is dually equivalent to ${\bf ED}$. In addition, ${\bf ePBSp_A}$ is equivalent to ${\bf eDeV}$.
\end{remark}

\begin{remark}
\begin{enumerate}
\item[]
\item For a compact Hausdorff space $X$, we recall \cite{Gle58} that the \emph{Gleason cover} $Y$ of $X$ is the Stone space of $\mathcal{RO}(X)$. Therefore, $\mathcal{RO}(X)$ is isomorphic to the Boolean algebra ${\sf Clopen}(Y)$ of clopen subsets of $Y$. Since up to isomorphism, $FN(Y)$ is generated by $\mathcal{RO}(X)$ and $FC(Y)$ is generated by ${\sf Clopen}(Y)$, we obtain that $FN(X)$ is isomorphic to $FC(Y)$. This yields an alternate representation of finitely valued normal functions on $X$.
\item For a de Vries algebra $(B,\prec)$, we recall \cite[Sec.~7]{Bez10} that the Stone space of $B$ is the Gleason cover of the space of ends of $(B,\prec)$. Similarly, for a proximity Baer Specker $A$-algebra  $(S,\prec)$, the Gleason cover of the space of ends of $(S,\prec)$ can be constructed as the minimal prime spectrum of $S$. For an Archimedean $A$, the Gleason cover can alternately be constructed as the space of maximal $\ell$-ideals of $S$.
\end{enumerate}
\end{remark}

\bibliographystyle{amsplain}
\bibliography{Gelfand}

\bigskip

\noindent Department of Mathematical Sciences, New Mexico State University, Las Cruces NM 88003-8001

\bigskip

\noindent Dipartimento di Matematica ``Federigo Enriques," Universit\`a degli Studi di Milano, via Cesare Saldini 50, I-20133 Milano, Italy

\bigskip

\noindent gbezhani@nmsu.edu, vincenzo.marra@unimi.it, pmorandi@nmsu.edu, oberdin@nmsu.edu

\end{document}